\renewcommand{\div}{\operatorname{div}}
\newcommand{\divv}{\mathbf{div}}
\DeclareMathOperator*{\esssup}{ess\,sup}
\newtheorem{conditions}{Conditions}[section]
\newtheorem{theorem}{Theorem}[section]
\newtheorem{definition}{Definition}[section]
\newtheorem{lemma}{Lemma}[section]
\numberwithin{equation}{section} \numberwithin{theorem}{section}
\numberwithin{lemma}{section} \numberwithin{conditions}{section}
\numberwithin{corollary}{section} \numberwithin{definition}{section}
\numberwithin{remark}{section} \numberwithin{proposition}{section}
\newcommand{\0}{\boldsymbol{0}}
\newcommand{\uu}{\mathbf{u}}
\newcommand{\vv}{\mathbf{v}}
\newcommand{\ww}{\mathbf{w}}
\newcommand{\ff}{\mathbf{f}}
\newcommand{\xx}{\mathbf{x}}
\newcommand{\mm}{\mathbf{m}}
\newcommand{\Ii}{\mathbf{I}}
\newcommand{\bM}{\mathbf{M}}
\newcommand{\bT}{\mathbf{T}}
\newcommand{\Du}{\mathbf{D}}
\newcommand{\Qq}{\mathbf{Q}}
\newcommand{\Tau}{\boldsymbol{\tau}}
\newcommand{\Sig}{\boldsymbol{\sigma}}
\renewcommand{\div}{\operatornamewithlimits{div}}
\renewcommand{\L}{\mathrm{L}}
\newcommand{\C}{\mathrm{C}}
\newcommand{\W}{\mathrm{W}}
\renewcommand{\H}{\mathrm{H}}
\newcommand{\LL}{\mathbf{L}}
\newcommand{\CC}{\mathbf{C}}
\newcommand{\WW}{\mathbf{W}}
\newcommand{\HH}{\mathbf{H}}
\newcommand{\VV}{\mathbf{V}}
\let\origmaketitle\maketitle
\def\maketitle{
  \begingroup
  \def\uppercasenonmath##1{} 
  \let\MakeUppercase\bfseries 
  \origmaketitle
  \endgroup
}
\@ifdefinable\@latex@chi{\let\@latex@chi\chi}
\renewcommand*\chi{{\@latex@chi\smash[t]{\mathstrut}}} 
\newcommand{\Addresses}{{
        \footnote{
            \noindent
            \textsuperscript{(1)}Lavrentyev Institute of Hydrodynamics, Siberian Branch of Russian Academy of Sciences, Novosibirsk, Russian Federation. \\
            \textsuperscript{(2)}FCT - Universidade do Algarve, Faro, Portugal.  \\
            \textsuperscript{(3)}CIDMA - Universidade de Aveiro, Aveiro, Portugal.  \\
            \textsuperscript{(4)}Laboratory for Mathematical and Computer Modeling in Natural and Industrial Systems, Altai State University, Barnaul, Russian Federation. \\
            \textsuperscript{(5)}Al-Farabi Kazakh National University, Almaty, Kazakhstan.
             \par\nopagebreak

            \noindent  \textit{E-mails:} S.N.~Antontsev: \url{antontsevsn@mail.ru}, H.B.~de~Oliveira: \url{holivei@ualg.pt}, I.V.~Kuznetsov: \url{kuznetsov_i@hydro.nsc.ru}, D.A.~Prokudin: \url{prokudin@hydro.nsc.ru}, Kh.~Khompysh: \url{konat_k@mail.ru}

}}}
\begin{document}%

\title[]{Mixtures of nonhomogeneous viscoelastic incompressible fluids governed by the Kelvin-Voigt equations\Addresses}
\author[]{S.N.~Antontsev\textsuperscript{(1)}, H.B.~de~Oliveira\textsuperscript{(2,3)}, I.V.~Kuznetsov\textsuperscript{(1,4)}, \\ D.A.~Prokudin\textsuperscript{(1,4)} and Kh.~Khompysh\textsuperscript{(5)}}

    \maketitle

\begin{abstract}
An initial-and boundary-value problem for the Kelvin–Voigt system, modeling a
mixture of $n$ incompressible and viscoelastic fluids, with non-constant density, is investigated in this work.
The existence of global-in-time weak solutions is established: velocity, density and pressure.
Under additional regularity assumptions, we also prove the uniqueness of the solution.
\end{abstract}

\newcommand{\wt}{{\widetilde t}}
\newcommand{\wu}{{\widetilde u}}


\section{Introduction}

The aim of this work is to study the following system of partial
differential equations of Kelvin-Voigt type,
\begin{align}
\label{eq.1.26} & \mathrm{div}\mathbf{v}_i=0,
\\
\label{eq.1.27} &
\partial_t(\rho_i\mathbf{v}_i)+
\mathbf{div}\left(\rho_i\mathbf{v}_i\otimes\mathbf{v}_i\right)=\rho_i\mathbf{f} _i-\nabla\pi_i+
\sum\limits_{j=1}^n\big(\mu_{ij}\Delta\mathbf{v}_j+\kappa_{ij}\partial_t\Delta\mathbf{v}_j
+\gamma_{ij}\left(\mathbf{v}_j-\mathbf{v}_i\right)\big),
\\
\label{eq.1.28} & \partial_t\rho_i+\mathrm{div} (\rho_i\mathbf{v}_i)=0,\qquad
\rho_i>0,
\end{align}
for $i\in\{1,\dots,n\}$. This system of equations is considered in a
cylinder $Q_{T}:=\Omega\times(0,T]$, where $\Omega$ is a bounded
domain of $\mathbb{R}^{d}$, $d\in \{2,3\}$, and $T$ is a given
positive constant. The notation used here is the following:
$\mathbf{v}_1=(v_1^1,\dots,v_1^d)$, \dots, $\mathbf{v}_n=(v_n^1,\dots,v_n^d)$,
$\rho_1$, \dots, $\rho_n$, and $\pi_1$, \dots, $\pi_n$ are the
sought solutions, whereas $\mathbf{f}_1=(f_1^1,\dots,f_1^d)$, \dots,
$\mathbf{f}_n=(f_n^1,\dots,f_n^d)$ are given functions, and $\mu_{ij}$,
$\kappa_{ij}$, $\gamma_{ij}$, with $i,j\in\{1,\dots,n\}$, are
positive constants. For $i\in\{1,\ldots,n\}$, we supplement
\eqref{eq.1.26}--\eqref{eq.1.28} with the following initial
conditions in $\Omega$,
\begin{equation}
\label{eq.1.29} \rho_i(\mathbf{x},0)=\rho_{i,0}(\mathbf{x}),\qquad
\rho_i(\mathbf{x},0)\mathbf{v}_i(\mathbf{x},0)=\rho_{i,0}(\mathbf{x})\mathbf{v}_{i,0}(\mathbf{x})\,,
\end{equation}
and with the following zero Dirichlet boundary conditions on
$\Gamma_{T}:=\partial\Omega\times(0, T]$,
\begin{equation}
\label{eq.1.30} \mathbf{v}_i=\boldsymbol{0}\,,
\end{equation}
where $\partial\Omega$ denotes the boundary of the domain $\Omega$.

Problem formed by the system of equations
\eqref{eq.1.26}--\eqref{eq.1.30} can be used to model flows of
mixtures formed by $n$ incompressible viscoelastic fluids with
non-constant densities.
A fundamental assumption in the theory of mixtures is that each
infinitesimal point of the continuum is occupied only by molecules
of a single constituent. Although such an assumption is physically
impossible, it is adequate if we assume that each constituent is
sufficiently dense, in the sense that each infinitesimal point can
be considered as a continuum of only one constituent. Whenever the
mixture deforms, these coexistence continua will deform relative to
each other~\cite{AC:1976a,AC:1976b}. By extending the principles of
continuum mechanics for single-constituent fluids, making certain
simplifications and assumptions, known in the literature as
metaphysical principles~\cite{Truesdell:1984}, we can derive the
governing equations for each constituent of the mixture.

In this work, we extend the results obtained in \cite{AKPS} to the case of a homogeneous $n$-constituent mixture. Our analysis is based on the framework developed for global weak solutions to the Kelvin-Voigt system, which models incompressible fluids with variable density, as investigated in \cite{AOKh:2019:c, Antontsev-2021, deOliveira}. The classical formulation of the Kelvin-Voigt equations describes the behavior of single-constituent viscoelastic fluids. The existence of global-in-time weak solutions has already been established, along with uniqueness results under appropriate additional regularity conditions~\cite{Oskolkov:1985,Oskolkov:1989,Zvyagin2010,deOliveira,Antontsev-2021}.
{Recently, 
the nonhomogeneous Kelvin-Voigt equations coupled with the convective nonhomogeneous Cahn-Hilliard equation were studied in \cite{Temam:2023}.}

The present paper is organized as follows.
Section~\ref{Sect:GE} is devoted to justifying the governing equations, explaining some extensions and limitations of the considered problem.
In Section~\ref{Sect:EWS}, we establish the global-in-time existence of a weak solutions (velocity and density) to the problem \eqref{eq.1.26}--\eqref{eq.1.30}.
The proof follows by combining the Galerkin approach with compactness arguments and the monotonicity method.
That a constructed weak solution (velocity and density) uniquely determines the pressure, is proved in
Section~\ref{Sect:Rec-Press}. In Section~\ref{Sect:Regularity}, the regularity of
weak solutions is improved. The smoothness of the solution proved
here allow us to prove its uniqueness, which, in turn, is aim of Section~\ref{Sect:Unique}.

The notation used in this work is quite standard in this field -- we address the interested reader to some of the monographs cited hereinafter~\cite{Temam:1984,Galdi:2011,Mazya:2011,Raja-Tao:1995}.
Lowercase boldface letters denote vector-valued functions and non-boldface letters stay for scalars.
Boldface capital letters denote tensors, or matrixes.
This notation is also extended for function spaces of scalar or vector-valued and tensor-valued function spaces.
The letters $C$ and $K$ will always denote positive constants, whose value may change from line to line, and its dependence on other parameters or data will always be clear from the exposition.
By $\Omega$, we always denote a domain (open and connected subset) in $\mathbb{R}^d$, with $d\geq 2$ integer.
The notation $\overline{\Omega}$ stands for the closure of the domain $\Omega$ in $\mathbb{R}^d$.
When $\alpha\in(0,1)$, $\C^{0,\alpha}(\overline{\Omega})$ denotes the space of function that are H\"older-continuous with exponent $\alpha$.
The norm in $\C^{0,\alpha}(\overline{\Omega})$ is denoted by $\|\cdot\|_{0,\alpha}$.
For $\ell\in\mathbb{N}_0$, $\C^\ell(\Omega)$ is the space consisting of function
whose all derivatives up to the order $\ell$ belong to $\C(\Omega)$, the set of all bounded continuous functions in $\Omega$.
$\C^\infty_0(\Omega)$ is the space of infinitely differentiable function with compact support in $\Omega$.
For $1\leq p\leq \infty$ and $\ell\in\mathbb{N}$, we denote by $\L^p(\Omega)$ and $\W^{\ell,p}(\Omega)$ the Lebesgue and Sobolev function spaces.
$\W^{\ell,p}_0(\Omega)$ is defined as the closure of $\C^\infty_0(\Omega)$ in the $\W^{\ell,p}(\Omega)$--norm.
$\L^{p'}(\Omega)$ and $\W^{-\ell,p'}(\Omega)$, where $p'$ is the H\"older conjugate of $p:\frac{1}{p}+\frac{1}{p'}=1$, are (topologically and algebraically isomorphic to) the dual spaces of $\L^p(\Omega)$ and $\W^{\ell,p}_0(\Omega)$.
The duality paring between $\W^{-\ell,p'}(\Omega)$ and $\W^{\ell,p}_0(\Omega)$ is denoted by $\langle\cdot,\cdot\rangle_{\W^{-\ell,p'}(\Omega)\times\W^{\ell,p}_0(\Omega)}$, or just $\langle\cdot,\cdot\rangle$ if the duality pairing of functions spaces is implied.
The norms in $\L^p(\Omega)$, $\W^{\ell,p}(\Omega)$ and $\W^{-\ell,p'}(\Omega)$ are denoted by $\|\cdot\|_p$, $\|\cdot\|_{\ell,p}$ and $\|\cdot\|_{-\ell,p'}$.
When $p=2$, $\L^2(\Omega)$ and $\W^{\ell,2}(\Omega)$ are Hilbert spaces, with the norms induced by the scalar products $\left(\cdot,\cdot\right)$ and $\left(\left(\cdot,\cdot\right)\right)_\ell$.
In this case, the Sobolev spaces are denoted by $\H^{\ell}(\Omega):=\W^{\ell,2}(\Omega)$, $\H^{\ell}_0(\Omega):=\W^{\ell,2}_0(\Omega)$ and $\H^{-\ell}(\Omega):=\W^{-\ell,2}(\Omega)$.
The vector-valued analogues are denoted by $\CC^{0,\alpha}(\overline{\Omega})$, $\mathbf{C}^\ell(\Omega)$, $\mathbf{C}^\infty_0(\Omega)$, $\LL^p(\Omega)$, $\WW^{\ell,p}(\Omega)$, $\WW^{\ell,p}_0(\Omega)$, $\HH^{\ell}(\Omega)$, $\HH^{\ell}_0(\Omega)$, $\WW^{-\ell,p'}(\Omega)$ and $\HH^{-\ell}(\Omega)$.

\section{The governing equations}\label{Sect:GE}
Let us consider a mixture formed by $n$ constituents of Newtonian
incompressible fluids. For $i\in\{1,\dots,n\}$, we denote by $\mathbf{v}_i$
and $\rho_i$ the velocity field and density of each fluid
constituent. The incompressibility constraint and the conservation
of mass for each constituent imply
\begin{alignat}{2}
\label{eq.1.01} &\mathrm{div}\mathbf{v}_i=0\,,
\\
\label{eq.1.02} &\partial_t\rho_i+\mathrm{div}(\rho_i\mathbf{v}_i)=m_i\,,
\end{alignat}
where $m_i$ accounts for the mass supply of each constituent. The
conservation of linear momentum for each constituent requires that
\begin{equation}
\label{eq.1.03}
\partial_t\big(\rho_i\mathbf{v}_i\big)+\mathbf{div}\big(\rho_i\mathbf{v}_i\otimes\mathbf{v}_i\big)=\rho_i\mathbf{f}
_i+\mathbf{div}\boldsymbol{\sigma}_i+\mm_i+m_i\mathbf{v}_i\,,
\end{equation}
where $\mathbf{f} _i$ is the density of the body forces acting from the
external environment on the $i$-th constituent of the mixture,
$\boldsymbol{\sigma}_i$ is the Cauchy stress tensor associated with the
$i$-th constituent, $\mm_i$ is the momentum supply due to the
interaction between the $i$-th constituent and other constituents
(interactive body forces), and $m_i\mathbf{v}_i$ is the contribution to the
momentum due to the mass production $m_i$ of the $i$-th constituent.
We define the total mass density of the mixture
\begin{equation}\label{eq.1.04}
\rho:=\sum\limits_{i=1}^{n}\rho_i,
\end{equation}
the mean velocity of the mixture
\begin{equation}\label{eq.1.05}
\mathbf{v}:=\frac{1}{\rho}\sum\limits_{i=1}^{n}\rho_i\mathbf{v}_i\,,
\end{equation}
and the total stress
\begin{equation}\label{eq.1.06}
\Sig:=\sum\limits_{i=1}^{n}\Sig_i\,.
\end{equation}
The conservation of mass and linear momentum for the whole mixture
read as
\begin{alignat}{2}
\label{eq.1.07} &\partial_t\rho+\mathrm{div}(\rho\mathbf{v})=0\,,
\\
\label{eq.1.08} &
\rho\left(\partial_t\mathbf{v}+(\mathbf{v}\cdot\nabla)\mathbf{v}\right)= \rho\mathbf{f}  +\mathrm{div}\bT\,,
\end{alignat}
where $\mathbf{f} $ is the resultant of all body forces acting on the
mixture, and $\bT$ is the Cauchy stress tensor of the mixture.
Equation \eqref{eq.1.08} is written in the non-conservative form for
later use. By summing \eqref{eq.1.03} from $i=1$ till $i=n$, using
\eqref{eq.1.04}--\eqref{eq.1.05}, and comparing the resulting
equation with \eqref{eq.1.07}, it turns out that
\begin{equation*}
\sum\limits_{i=1}^{n}m_i=0\,,
  \end{equation*}
which states that the total mass of the mixture is conserved. On the
other hand, summing the non-conservative form of \eqref{eq.1.03}
from $i=1$ to $i=n$, using \eqref{eq.1.02} and
\eqref{eq.1.05}--\eqref{eq.1.06}, we obtain
\begin{equation*}
\rho\left(\partial_t\mathbf{v}+(\mathbf{v}\cdot\nabla)\mathbf{v}\right)=\sum\limits_{i=1}^{n}\rho_i\mathbf{f}
_i +
\mathbf{div}\!\left(\Sig-\sum\limits_{i=1}^{n}\rho_i\mathbf{v}_i\otimes\mathbf{v}_i\right)+\sum\limits_{i=1}^{n}\left(\mm_i+m_i\mathbf{v}_i\right)\,.
\end{equation*}
See \cite[Sections 2.2-4]{Raja-Tao:1995} for more details. By Newton's
third law, it should be
\begin{equation}
\label{eq.1.09}
\sum\limits_{i=1}^{n}\left(\mm_i+m_i\mathbf{v}_i\right)=\0\,,
\end{equation}
which means that the net momentum supply to the mixture due to all
the constituents is zero. Hence,
\begin{equation}
\label{eq.1.10}
\rho\big(\partial_t\mathbf{v}+(\mathbf{v}\cdot\nabla)\mathbf{v}\big)=\sum\limits_{i=1}^{n}\rho_i\mathbf{f}
_i +
\mathbf{div}\!\left(\Sig-\sum\limits_{i=1}^{n}\rho_i\mathbf{v}_i\otimes\mathbf{v}_i\right)\,.
\end{equation}
Comparing \eqref{eq.1.10}  with \eqref{eq.1.08}, it must be
\begin{alignat*}{2}
& \mathbf{f} =\frac{1}{\rho}\sum\limits_{i=1}^{n}\rho_i\mathbf{f} _i\,, \\ 
& \bT=\Sig-\sum\limits_{i=1}^{n}\rho_i\mathbf{v}_i\otimes\mathbf{v}_i\,.
\end{alignat*}
If we assume an isothermal process and suppose that chemical
reactions are absent, then there is no interconversion of mass
between the constituents, and therefore
\begin{equation}
\label{eq.1.11}  m_i=0\qquad \forall\ i\in\{1,\dots,n\}\,.
\end{equation}
Now, combining \eqref{eq.1.11}  with \eqref{eq.1.09} , we arrive at
\begin{equation}
\label{eq.1.12} \sum\limits_{i=1}^{n}\mm_i=\0\,.
\end{equation}

\
Darcy's law is among the most commonly used laws to study the
interactive body forces among the mixture constituents. As the
velocity of the $i$-th constituent relative to the $j$-th
constituent is defined by $\mathbf{v}_j-\mathbf{v}_i$ (slip
velocity), we can model the diffusion of one constituent through
another by using slip velocities~\cite{EZS:2024}. Taking this into
account, a general expression for the $i$-th momentum supply can be
given by
\begin{equation}\label{eq.1.13}
\mm_i=\sum\limits_{j=1}^{n}\gamma(\rho_i,\rho_j,|\mathbf{v}_j-\mathbf{v}_i|)\left(\mathbf{v}_j-\mathbf{v}_i\right),
\end{equation}
for some functions
$\gamma_{ij}=\gamma(\rho_i,\rho_j,|\mathbf{v}_j-\mathbf{v}_i|)$, that can be
arranged in a matrix
\begin{equation*}
\boldsymbol{\gamma}= \left[
\begin{array}{cccc}
0&\gamma(\rho_1,\rho_2,|\mathbf{v}_1-\mathbf{v}_2|)&\cdots&\gamma(\rho_1,\rho_n,|\mathbf{v}_1-\mathbf{v}_n|)
\\[10pt]
\gamma(\rho_2,\rho_1,|\mathbf{v}_2-\mathbf{v}_1|)&0&
\cdots&\gamma(\rho_2,\rho_n,|\mathbf{v}_2-\mathbf{v}_n|)
\\[10pt]
\vdots&\vdots&\ddots&\vdots
\\[10pt]
\gamma(\rho_n,\rho_1,|\mathbf{v}_n-\mathbf{v}_1|)
&\gamma(\rho_n,\rho_2,|\mathbf{v}_n-\mathbf{v}_2|) & \cdots  &   0
\end{array}
\right]
\end{equation*}
In this work, we assume a simplified version of \eqref{eq.1.13},
$$
\gamma(\rho_i,\rho_j,|\mathbf{v}_j-\mathbf{v}_i|)=\gamma_{ij}=\mbox{Constant}\geq0.
$$
This yields the following relation for the $i$-th momentum supply
\begin{equation}\label{eq.1.14}
\mm_i=\sum\limits_{j=1}^{n}\gamma_{ij}\left(\mathbf{v}_j-\mathbf{v}_i\right).
\end{equation}
Taking into account \eqref{eq.1.12}, \eqref{eq.1.13} and
\eqref{eq.1.14}, the following assumptions make sense
\begin{equation}\label{eq.1.15}
\gamma_{ij}=\gamma_{ji}\quad\forall\,i,j\in\{1,\dots,n\}\quad \text{
and }\quad \sum\limits_{i,j=1}^n|\gamma_{ij}|\leq \gamma^+,
\end{equation}
for some positive constant $\gamma^+$.

Regarding the constitutive equation for the Cauchy stress tensor, we
first note the conservation of angular momentum for the mixture
requires that $\Sig^T=\Sig$, which, in view of \eqref{eq.1.06},
implies
\begin{equation}\label{eq.1.16}
\sum\limits_{i=1}^{n}\left(\Sig_i-\Sig_i^T\right)=\0,
\end{equation}
On the other hand, the conservation of angular momentum for the
$i$-th constituent leads to
\begin{equation}\label{eq.1.17}
\bM_i=\Sig_i-\Sig_i^T\,,
\end{equation}
where $\bM_i$ is the angular momentum supply to the $i$-th
constituent. It should be noted that although the total stress
tensor $\Sig$ of the mixture must be symmetric, the partial stresses
$\Sig_i$ need not be. However, if we assume body couples are absent,
then $\bM_i=\0$ and, in view of \eqref{eq.1.17}, $\Sig_i$ is
symmetric~\cite{Raja-Tao:1995}. In this case, \eqref{eq.1.16} is
immediately  verified.

We split the Cauchy stress tensor of each constituent into spherical
and deviatoric parts,
\begin{equation}\label{eq.1.18}
\Sig_i=-p_i\Ii+\Tau_i\,,
\end{equation}
where $p_i$ is the pressure acting on the $i$-th fluid constituent,
$\Ii$ is the identity tensor and $\Tau_i$ is the partial deviatoric
tensor. Note that in \eqref{eq.1.18} $-p_i\Ii$ accounts for the
reaction stress due to the constraint of incompressibility condition
\eqref{eq.1.01}. We assume the deviatoric tensor results from
combining Hook's law of linear elasticity with Newton's law of
viscosity as follows
\begin{equation}\label{eq.1.19}
\Tau_i=2\mu_i\Du(\mathbf{v}_i)+2\kappa_i\partial_t\Du(\mathbf{v}_i)\,,\qquad
\Du(\mathbf{v}_i):=\frac{1}{2}\left(\nabla\mathbf{v}_i+\nabla\mathbf{v}_i^T\right)\,,
\end{equation}
where $\mu_i$ and $\kappa_i$ are viscosity and elasticity (positive)
constants specific to each fluid constituent, $\Du(\mathbf{v}_i)$ is the
symmetric velocity gradient of $i$-th constituent (rate of strain
tensor). Equation \eqref{eq.1.19} is a generalization to higher
dimensions of the one-dimensional Kelvin-Voigt model that has been
used in the applications to model the rheology of viscous fluids
with elastic properties, as polymers and certain human
tissues~\cite{Pav:1971,CSS:2002}. The classical one-dimensional
Kelvin-Voigt model
\begin{equation}\label{eq.1.20}
\tau=\mu\frac{\partial v}{\partial
x}+\kappa\frac{\partial^2v}{\partial x\partial t}
\end{equation}
gives a simple constitutive relation describing the behavior of
linear viscoelastic materials, where $\mu$ denotes the fluid
viscosity and $\kappa$ accounts for the relaxation time, that is the
time taken by the fluid to return from the deformed state to its
initial equilibrium state. Constitutive equation \eqref{eq.1.20} can
also be justified by using the Boltzmann superposition principle, or
by considering mechanical models constructed using elastic springs
and viscous dashpots~\cite{Barnes:2000}. Constitutive equations that
seem to have any theoretical basis and practical evidence in the
mixture of fluids are those involving mutual viscosity and
elasticity constants~\cite{SW:1977,Massoudi:2008}. A law that takes
this into account and generalizes \eqref{eq.1.19}, can be written as
follows,
\begin{equation}\label{eq.1.21}
\Tau_i=\sum\limits_{j=1}^{n}\big(2\mu_{ij}\Du(\mathbf{v}_j)+2\kappa_{ij}\partial_t\Du(\mathbf{v}_j)\big)
\,,
\end{equation}
where
$\mu_{ij}$ and $\kappa_{ij}$ are mutual viscosity and elasticity
constants, that is $\mu_{ij}$ and $\kappa_{ij}$ measure viscous and
elastic forces exerted by molecules of constituent $i$ and molecules
of constituent $j$. When $j=i$, $\mu_{ii}$ and $\kappa_{ii}$ measure
forces exerted between molecules of the same fluid constituent, as
in the flow of a single fluid.

In fluid mechanics experiments, the choice of constitutive equation
depends to a considerable extent on the use to which it is to be
put. For the characterization of many viscoelastic fluids, equations
of the form \eqref{eq.1.18}--\eqref{eq.1.21} are generally accepted
as the correct starting point for the description of the rheology of
incompressible viscoelastic fluids~\cite{BAH:1987a}. As there is a
large number of possible relations that one could propose linking
stresses and strains, some admissibility conditions are usually
imposed to narrow the field of possible relations. For
incompressible fluids, Oldroyd~\cite{Oldroyd:1950} proposed to
restrict consideration to models that are: (a) frame invariant; (b)
deterministic; (c) local. However, the elastic tensor
$\partial_t\Du(\mathbf{v}_i)$ is not objective, and therefore does not satisfy
the principle of frame indifference. To see this, let
$\Du(\mathbf{v}_i)^\ast$ and $\partial_t\Du(\mathbf{v}_i)^\ast$ be the representations
of $\Du(\mathbf{v}_i)$ and $\partial_t\Du(\mathbf{v}_i)$ in a second reference frame,
and consider an arbitrary orthogonal tensor $\Qq=\Qq(\cdot,t)$. As
$\Du(\mathbf{v}_i)$ is objective, and so
$\Du(\mathbf{v}_i)^\ast=\Qq\Du(\mathbf{v}_i)^\ast\Qq^T$, we should have
\begin{equation*}
\partial_t\Du(\mathbf{v}_i)^\ast=\partial_t\Qq\Du(\mathbf{v}_i)\Qq^T+\Qq\partial_t\Du(\mathbf{v}_i)^\ast
\Qq^T+\Qq\Du(\mathbf{v}_i)\partial_t\Qq^T.
\end{equation*}
Hence, the objectivity of $\partial_t\Du(\mathbf{v}_i)$ would require
\begin{equation*}
\partial_t\Qq\Du(\mathbf{v}_i)\Qq^T+\Qq\Du(\mathbf{v}_i)\partial_t\Qq^T=\0\,,
\end{equation*}
which is not always guaranteed. To construct an admissible
constitutive equation, we can replace the partial time-derivative in
\eqref{eq.1.19} by the following convected
time-derivative~\cite{BAH:1987a},
\begin{equation}\label{eq.1.22}
\Du(\mathbf{v}_i)^\triangledown=\partial_t\Du(\mathbf{v}_i)+(\mathbf{v}_i\cdot\nabla)\Du(\mathbf{v}_i)-\Big(\Du(\mathbf{v}_i)\nabla\mathbf{v}_i+(\Du(\mathbf{v}_i)\nabla\mathbf{v}_i)^T\Big)\,.
\end{equation}
However, PDEs resulting from constitutive equations involving the
convected time-derivative \eqref{eq.1.22} become too complex to
study, so some simplification is necessary for the purposes of their
mathematical analysis.
The Cauchy stress tensor in a viscoelastic fluid has a contribution
generated by the viscous and elastic parts of the fluid. However,
the elastic part of the fluid does not, by definition, dissipate the
energy. This implies that the product $\Tau_i:\Du(\mathbf{v}_i)$ does not
provide a characterization of the dissipation in the material.
Indeed, the stress $\Tau_i$ in \eqref{eq.1.21} necessarily contains
a non-dissipative part that does not vanish in the product
$\Tau_i:\Du(\mathbf{v}_i)$. In particular, the term $\Tau_i:\Du(\mathbf{v}_i)$ is
not always positive as in the standard incompressible Navier-Stokes
fluid where the positivity of the viscosities $\mu_{ij}$ imply
$\Tau_i:\Du(\mathbf{v}_i)\geq0$
(see~\cite{HMPST:2017}). In this work, we assume a generalization of
the hypothesis  underlying the one dimensional model, that is,
$\mu_{ij}$ and $\kappa_{ij}$ are assumed to be constants such that
$\displaystyle\mathbf{M}=\{\mu_{ij}\}_{i, j = 1}^{n}$
 and $\displaystyle\mathbf{K}=\{\kappa_{ij}\}_{i, j = 1}^{n}$
 are positive definite matrixes, \emph{i.e.}, there exist positive constants $\mu^-$, $\mu^+$, $\kappa^-$ and $\kappa^+$ such that
\begin{alignat}{2}
\label{eq.1.24} &0<\mu^-|\boldsymbol{\zeta}|^2
\leq(\mathbf{M}\boldsymbol{\zeta},\boldsymbol{\zeta})\leq
\mu^+|\boldsymbol{\zeta}|^2,\qquad
\forall\,\boldsymbol{\zeta}\in\mathbb{R}^n\backslash\{\boldsymbol{0}\},
\\
\label{eq.1.25} &0<\kappa^-|\boldsymbol{\zeta}|^2\leq
(\mathbf{K} \boldsymbol{\zeta},\boldsymbol{\zeta})\leq
\kappa^+|\boldsymbol{\zeta}|^2,\qquad
\forall\,\boldsymbol{\zeta}\in\mathbb{R}^n\backslash\{\boldsymbol{0}\}.
\end{alignat}
Combining \eqref{eq.1.01}--\eqref{eq.1.03}, \eqref{eq.1.14},
\eqref{eq.1.18} and \eqref{eq.1.21}, we obtain the system of partial
differential equations \eqref{eq.1.26}--\eqref{eq.1.28}.

{Note that if $\mathbf{M}$, $\mathbf{K}$ and $\boldsymbol{\gamma}$ are
diagonal matrixes, then the subsystem of $n$ equations
\eqref{eq.1.27} decouples and, therefore, the whole system
\eqref{eq.1.26}--\eqref{eq.1.28} falls into the well-known theory of
Kelvin-Voigt equations that govern flows of one-constituent viscoelastic
fluids~\cite{AOKh:2020, AOKh:2019:b, AOKh:2019:c,
Antontsev-2021, Mohan:2020, Oskolkov:1989, Oskolkov:1985,
Turbin:2023,  Zvyagin:2024, Zvyagin:2023, Zvyagin-2018-1}. In the
present work, we consider a much more complicated case of
non-diagonal and non-triangular matrixes $\mathbf{M}$,
$\mathbf{K}$, and $\boldsymbol{\gamma}$. Solvability topics for
fluid mechanics equations with non-diagonal viscosity matrixes have been carried out by several authors, amongst whom~\cite{AC:1976a,Baris:2013,Beevers:1982,Frehse_SJMA_2005,Frehse_AM_2005,Frehse_AM_2008,Kirwan.Fluids:2020, Mamontov:2021, Mamontov:2014,Prokudin:2021}.}

\section{Existence of weak solutions}
\label{Sect:EWS} Let us introduce the following function spaces
widely used in Mathematical Fluid Mechanics:
\begin{align*}
&\mathcal{V}:=\{\mathbf{v}\in \mathbf{C}_{0}^\infty(\Omega):\,\mathrm{div}\mathbf{v}=0\},
\\
&\HH:=\text{ closure of }\mathcal{V} \text{ in the $\LL^2(\Omega)$--norm},
\\
&\VV^{\ell}:=\text{ closure of }\mathcal{V}\text{ in the $\WW^{\ell,2}(\Omega)$--norm},\quad \ell\in\mathds{N}.
\end{align*}
As usual, when $\ell=1$, we denote $\VV^{\ell}$ only by $\VV$, instead of $\VV^{1}$.

\begin{conditions}
\label{CondsA} Along with the hypotheses \eqref{eq.1.15} and
\eqref{eq.1.24}--\eqref{eq.1.25}, we assume the following conditions hold:
\begin{itemize}
\item  $\Omega\subset\mathbb{R}^d$ is a bounded domain, $d \in\{2,3\}$, with its boundary $\partial\Omega$ supposed to be Lipschitz-continuous;
\item the initial data is regular in the sense that
\begin{equation}\label{eq.2.01}
\rho_{i,0}\in \C^1(\overline{\Omega})\quad \wedge\quad \mathbf{v}_{i,0}\in
\VV \qquad \forall\ i\in\{1,\dots,n\};
\end{equation}
\item the initial densities satisfy
\begin{equation}\label{eq.2.02}
0<\rho^{-}\leq
\min\limits_{i\in\{1,\dots,n\}}\left\{\inf_{\mathbf{x}\in\overline{\Omega}}\rho_{i,0}(\mathbf{x})\right\}\leq
\rho_{i,0}(\mathbf{x})\leq
\max\limits_{i\in\{1,\dots,n\}}\left\{\sup_{\mathbf{x}\in\overline{\Omega}}\rho_{i,0}(\mathbf{x})\right\}\leq
\rho^+\quad \forall\ \mathbf{x}\in\overline{\Omega},
\end{equation}
for some positive constants $\rho^{-}$ and $\rho^+$, and all $ i\in\{1,\dots,n\}$;
\item the forces fields are integrable in the sense that
\begin{alignat}{2}
\label{eq.2.03} & \mathbf{f}_i\in \L^2(0,T;\LL^{2}(\Omega))\qquad  \forall\ i\in\{1,\dots,n\}. &&
\end{alignat}
\end{itemize}
\end{conditions}

A weak solution to the problem \eqref{eq.1.26}--\eqref{eq.1.30} is
defined as follows.
 \begin{definition}\label{def:ws}
Assume Conditions~\ref{CondsA} are verified.
The $n$-uplet pair of functions
$(\rho_1,\mathbf{v}_1),\dots, (\rho_n,\mathbf{v}_n)$ is said to be a weak solution
to the problem \eqref{eq.1.26}--\eqref{eq.1.30}, if for each
$i\in\{1,\dots,n\}$:
\begin{enumerate}
\item $\mathbf{v}_i\in \L^{\infty}(0,T;\VV)\cap \C([0,T];\HH)$, $\partial_t\mathbf{v}_i\in \L^2(0,T;\VV)$;
\item  {$\rho_i>0$ a.e. in $Q_T$}, $\rho_i\in \C([0,T];\L^{\lambda}(\Omega))$ for all $\lambda\in[1,+\infty)$,
 and $\rho_i|\mathbf{v}_i|^2\in \L^{\infty}(0,T;\L^{1}(\Omega))$;
\item {$\rho_i(\mathbf{x},0)=\rho_{i,0}(\mathbf{x})$ and $\rho_i(\mathbf{x},0)\mathbf{v}_i(\mathbf{x},0)=\rho_{i,0}(\mathbf{x})\mathbf{v}_{i,0}(\mathbf{x})$} in $\Omega$, with $\rho_{i,0}>0$ a.e. in $\Omega$;
\item For every $\boldsymbol{\phi}_i\in\mathcal{V}$ there holds
\begin{equation}
\label{eq.2.04}%
\begin{split}
\frac{d}{dt}&\left(\int_{\Omega}\rho_i(\mathbf{x},t)\mathbf{v}_i(\mathbf{x},t)\cdot\boldsymbol{\phi}_i\,d\mathbf{x}
+\sum\limits_{j=1}^{n}\kappa_{ij}\int_\Omega\nabla\mathbf{v}_j(\mathbf{x},t)
:\nabla\boldsymbol{\phi}_i\,d\mathbf{x} \right)
\\&
+\sum\limits_{j=1}^{n}\mu_{ij}\int_\Omega\nabla\mathbf{v}_j(\mathbf{x},t):\nabla\boldsymbol{\phi}_i\,d\mathbf{x}
=\int_\Omega \rho_i(\mathbf{x},t)\mathbf{f}_i(\mathbf{x},t)\cdot\boldsymbol{\phi}_i\,d\mathbf{x}
\\&
+\int_\Omega
\rho_i(\mathbf{x},t)\mathbf{v}_i(\mathbf{x},t)\otimes\mathbf{v}_i(\mathbf{x},t):\nabla\boldsymbol{\phi}_i\,d\mathbf{x}
+\sum\limits_{j=1}^{n}\gamma_{ij}\int_{\Omega}\left(\mathbf{v}_j(\mathbf{x},t)-\mathbf{v}_i(\mathbf{x},t)\right)\cdot\boldsymbol{\phi}_i\,d\mathbf{x},
\end{split}
\end{equation}
in the distribution sense on $(0,T)$.
\item For every $\phi_i\in\CC^{\infty}_{0}(\Omega)$
there holds
\begin{equation}
\label{eq.2.05} \frac{d}{dt}\int_{\Omega}\rho_i(\mathbf{x},t)\,\phi_i\,d\mathbf{x} -
\int_{\Omega}\rho_i(\mathbf{x},t)\mathbf{v}_i(\mathbf{x},t)\cdot{\nabla} \phi_i\,d\mathbf{x} =0,
\end{equation}
in the distribution sense on $(0,T)$.
\end{enumerate}
\end{definition}

In the following theorem we establish the main existence result of this work.

\begin{theorem}[\textbf{Global-in-time existence}] \label{Theorem.2.1}
Let Conditions~\ref{CondsA} be verified.
If, in addition, conditions \eqref{eq.1.15} and \eqref{eq.1.24}--\eqref{eq.1.25} hold, then the problem
\eqref{eq.1.26}--\eqref{eq.1.30} has a unique global-in-time weak
solution $\{(\rho_1,\mathbf{v}_1),\dots,(\rho_n,\mathbf{v}_n)\}$ in the sense of
Definition \ref{def:ws}. In addition, there exist positive constants $K_1$ and $K_2$
\begin{align}
\label{eq.2.06}
&\sum\limits_{i=1}^{n}\left(\esssup_{t\in[0,T]}\|\mathbf{v}_i(\cdot,t)\|_{2}^2+
\esssup_{t\in[0,T]}\|\nabla \mathbf{v}_i(\cdot,t)\|_{2}^2\right)\leq K_1,
\\
\label{eq.2.07}
&\sum\limits_{i=1}^{n}\int_0^T\left(\|\sqrt{\rho_i(\cdot,t)}\partial_t\mathbf{v}_i(\cdot,t)\|_{2}^2+
\|\nabla\partial_t\mathbf{v}_i(\cdot,t)\|_{2}^2\right)dt\leq K_2.
\end{align}
\end{theorem}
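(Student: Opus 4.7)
The plan is a Galerkin-plus-characteristics scheme. I fix a basis $\{\mathbf{w}_k\}\subset\mathcal{V}$ that is orthonormal in $\HH$ and dense in $\VV^2$ (for instance the eigenfunctions of the Stokes operator on $\Omega$), and at each level $m$ set $\VV_m:=\mathrm{span}\{\mathbf{w}_1,\dots,\mathbf{w}_m\}$ and seek velocities $\mathbf{v}_i^m(t)\in\VV_m$, $i=1,\dots,n$, together with densities $\rho_i^m$ obtained, in a decoupled manner, as the unique solutions of the linear transport equations $\partial_t\rho_i^m+\mathrm{div}(\rho_i^m\mathbf{v}_i^m)=0$ with $\rho_i^m|_{t=0}=\rho_{i,0}$ by the method of characteristics. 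Since $\mathrm{div}\,\mathbf{v}_i^m=0$, this preserves $\rho^-\le\rho_i^m\le\rho^+$ pointwise and the $\C^1$-regularity of the initial data. The coupled ODE system for the coefficient vector of $(\mathbf{v}_1^m,\dots,\mathbf{v}_n^m)$ has a joint mass matrix whose leading block involves $\mathbf{K}$ tensored with the Gram matrix of $\{\mathbf{w}_k\}$ in $\H^1$, invertible by \eqref{eq.1.25}; Cauchy--Lipschitz gives local existence, which the a priori estimates below extend to $[0,T]$.

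The first a priori bound follows by testing the $i$-th approximate momentum equation against $\mathbf{v}_i^m$ and summing over $i$. The inertial and convective terms combine, via the approximate continuity equation, into $\tfrac12\frac{d}{dt}\sum_i\int_\Omega\rho_i^m|\mathbf{v}_i^m|^2$. The viscous and elastic contributions produce $\sum_{i,j}\mu_{ij}\int_\Omega\nabla\mathbf{v}_j^m:\nabla\mathbf{v}_i^m$ and $\tfrac12\frac{d}{dt}\sum_{i,j}\kappa_{ij}\int_\Omega\nabla\mathbf{v}_j^m:\nabla\mathbf{v}_i^m$, which are coercive in $\sum_i\|\nabla\mathbf{v}_i^m\|_2^2$ by \eqref{eq.1.24}--\eqref{eq.1.25}. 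By the symmetry in \eqref{eq.1.15} the exchange term rewrites as $-\tfrac12\sum_{i,j}\gamma_{ij}\int_\Omega|\mathbf{v}_j^m-\mathbf{v}_i^m|^2\le 0$ and is thus favorable. The forcing is controlled by Cauchy--Schwarz together with $\rho_i^m\le\rho^+$ and \eqref{eq.2.03}, and Gronwall delivers \eqref{eq.2.06}. The estimate \eqref{eq.2.07} is obtained by testing against $\partial_t\mathbf{v}_i^m$ and summing: positivity of $\mathbf{K}$ produces $\sum_i\|\nabla\partial_t\mathbf{v}_i^m\|_2^2$, the inertial term gives $\sum_i\|\sqrt{\rho_i^m}\,\partial_t\mathbf{v}_i^m\|_2^2$ since $\rho_i^m\ge\rho^->0$, and the convective and coupling terms are absorbed using $\VV\hookrightarrow\LL^6(\Omega)$ for $d\le 3$ and the bound \eqref{eq.2.06}.

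With these uniform-in-$m$ estimates, Aubin--Lions yields, along a subsequence, $\mathbf{v}_i^m\to\mathbf{v}_i$ strongly in $\C([0,T];\HH)\cap\L^2(0,T;\HH)$, while $\mathbf{v}_i^m\stackrel{\ast}{\rightharpoonup}\mathbf{v}_i$ in $\L^\infty(0,T;\VV)$ and $\partial_t\mathbf{v}_i^m\rightharpoonup\partial_t\mathbf{v}_i$ in $\L^2(0,T;\VV)$. For the densities, the strong convergence of the velocities combined with either the uniform $\C^1$-bound on $\rho_i^m$ or a DiPerna--Lions renormalization argument yields $\rho_i^m\to\rho_i$ in $\C([0,T];\L^\lambda(\Omega))$ for every $\lambda\in[1,\infty)$, together with $\rho^-\le\rho_i\le\rho^+$ a.e.\ in $Q_T$. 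These convergences are enough to pass to the limit in every term of \eqref{eq.2.04}--\eqref{eq.2.05}, including the nonlinear $\rho_i^m\mathbf{v}_i^m\otimes\mathbf{v}_i^m$ and $\rho_i^m\mathbf{v}_i^m$ products; the attainment of the initial conditions follows from $\mathbf{v}_i\in\C([0,T];\HH)$ and $\rho_i\in\C([0,T];\L^\lambda(\Omega))$.

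The main obstacle is the coupling of the $n$ momentum equations through the \emph{non-diagonal} matrices $\mathbf{M}$ and $\mathbf{K}$: no single equation is coercive in its own velocity gradient, so the ODE well-posedness, the a priori estimates, and the limit passage in the elastic term $\sum_j\kappa_{ij}\partial_t\Delta\mathbf{v}_j^m$ must all be performed jointly on the full vector $(\mathbf{v}_1^m,\dots,\mathbf{v}_n^m)$, leaning throughout on the positive-definiteness \eqref{eq.1.24}--\eqref{eq.1.25}. The uniqueness part of the statement is deferred to Section~\ref{Sect:Unique}, where the regularity improvement proved in Section~\ref{Sect:Regularity} will allow an energy identity for the difference of two solutions that is closed by a Gronwall argument.
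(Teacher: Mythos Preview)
Your proposal is correct and follows essentially the same route as the paper: a semi-implicit Galerkin scheme with Stokes eigenfunctions, density solved via characteristics (Lemma~\ref{lem:LS}), the energy estimate \eqref{eq.2.06} by testing with $\mathbf{v}_i^m$ and summing over $i$ (the paper's Lemma~\ref{Lem.2.2}), the estimate \eqref{eq.2.07} by testing with $\partial_t\mathbf{v}_i^m$ (Lemma~\ref{Lem.2.3}), then Aubin--Lions and passage to the limit, with uniqueness deferred to Section~\ref{Sect:Unique}. Two minor technical remarks: since $\mathbf{f}_i$ is only $\L^2$ in time the ODE existence is by Carath\'eodory rather than Cauchy--Lipschitz; and for the density compactness the paper does not use DiPerna--Lions but the more elementary observation that $\|\rho_i^{(m)}(\cdot,t)\|_2=\|\rho_{i,0}^{(m)}\|_2$ for all $t$, so weak convergence plus norm convergence upgrades to strong convergence in $\C([0,T];\L^\lambda)$.
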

Constants $K_1$ and $K_2$, considered in \eqref{eq.2.06} and \eqref{eq.2.07}, have the same dependence on the problem data as the right-hand side positive constants of the estimates \eqref{eq.2.28} and \eqref{eq.2.36} established in Lemmas~\ref{Lem.2.2} and \ref{Lem.2.3} below.

For the proof of Theorem~\ref{Theorem.2.1}, we shall use a
semi-implicit Galerkin approximation method. To this end, it is
important to recall the following classical result.
\begin{lemma}[\cite{LS:1978,Simon:1990}]\label{lem:LS}
Let $\mathbf{v}\in \C^1([0,T];\CC^1(\overline{\Omega}))$ and assume
$\mathbf{v}=\boldsymbol{0}$ on $\partial\Omega$. If $\rho_0\in
\C^1(\overline{\Omega})$ and
$0<\rho^{-}\leq\rho_{0}(\mathbf{x})\leq\rho^+<\infty$ for all
$\mathbf{x}\in\overline{\Omega}$, then the following initial-value problem
\begin{alignat*}{2}
& \partial_t\rho+\mathrm{div} (\rho\mathbf{v})=0,\quad \mathrm{div}\mathbf{v}=0,\quad \rho>0,\quad  \mbox{in}\quad Q_T, \\
& \rho=\rho_0 \qquad \mbox{in}\quad \Omega\times\{0\}
\end{alignat*}
has a unique (classical) solution $\rho\in \C^1([0,T];\C^1(\overline{\Omega}))$. Moreover,
\begin{equation}\label{eq.2.08}
0<\rho^-\leq\rho(\mathbf{x},t)\leq\rho^+\qquad \forall\ (\mathbf{x},t)\in Q_T.
\end{equation}
\end{lemma}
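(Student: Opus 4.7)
The plan is to prove this classical transport result by the method of characteristics, exploiting the divergence-free condition to reduce the continuity equation to a pure transport equation and the no-slip boundary condition to keep trajectories inside $\overline{\Omega}$. Because $\mathrm{div}\,\mathbf{v}=0$, the equation $\partial_t\rho+\mathrm{div}(\rho\mathbf{v})=0$ is equivalent to $\partial_t\rho+\mathbf{v}\cdot\nabla\rho=0$, which is linear and first-order.

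First I would construct the flow map. For each $(s,x)\in[0,T]\times\overline{\Omega}$, let $X(t,s,x)$ be the solution of the ODE
\begin{equation*}
\tfrac{d}{dt}X(t,s,x)=\mathbf{v}(X(t,s,x),t),\qquad X(s,s,x)=x.
\end{equation*}
Since $\mathbf{v}\in \C^1([0,T];\CC^1(\overline{\Omega}))$, extending $\mathbf{v}$ by zero to $\mathbb{R}^d$ preserves Lipschitz continuity (thanks to $\mathbf{v}=\boldsymbol{0}$ on $\partial\Omega$), so Cauchy–Lipschitz gives a global-in-time $C^1$ flow, with smooth dependence on the initial data $(s,x)$. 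The no-slip condition implies that every boundary point is a stationary solution of the ODE; by uniqueness, no trajectory starting in $\overline{\Omega}$ can cross $\partial\Omega$, so $X(t,s,\cdot)$ maps $\overline{\Omega}$ into itself for all $t,s\in[0,T]$. Liouville's formula combined with $\mathrm{div}\,\mathbf{v}=0$ yields $\det(\nabla_x X(t,s,x))\equiv 1$, whence $X(t,s,\cdot)$ is a measure-preserving $C^1$ diffeomorphism of $\overline{\Omega}$ with inverse $X(s,t,\cdot)$.

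Next, I would define the candidate solution $\rho(x,t):=\rho_0(X(0,t,x))$. Composition of $C^1$ maps yields $\rho\in \C^1([0,T];\C^1(\overline{\Omega}))$. Differentiating $\rho(X(\tau,0,y),\tau)$ in $\tau$ gives $(\partial_t\rho+\mathbf{v}\cdot\nabla\rho)(X(\tau,0,y),\tau)$; but since $X(0,\tau,X(\tau,0,y))=y$, the composition equals the constant $\rho_0(y)$, so its $\tau$-derivative vanishes, and the PDE is satisfied pointwise. The initial condition holds since $X(0,0,x)=x$, and the bounds $\rho^-\leq\rho(x,t)\leq\rho^+$ follow at once from the hypothesis on $\rho_0$ and the fact that $X(0,t,x)\in\overline{\Omega}$. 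For uniqueness, if $\widetilde{\rho}$ is any other classical solution, then $\tfrac{d}{d\tau}\widetilde{\rho}(X(\tau,0,y),\tau)=0$, so $\widetilde{\rho}(X(t,0,y),t)=\rho_0(y)$, i.e.\ $\widetilde{\rho}(x,t)=\rho_0(X(0,t,x))=\rho(x,t)$.

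The main technical points that need care are (i) ensuring the characteristics stay inside $\overline{\Omega}$ on the whole interval $[0,T]$ — this is where the assumption $\mathbf{v}|_{\partial\Omega}=\boldsymbol{0}$ is crucial, since otherwise trajectories could exit and the problem would need a boundary inflow condition; and (ii) securing the joint $C^1$ regularity of $X$ in $(t,s,x)$ up to the parabolic boundary, which uses that $\mathbf{v}$ is $C^1$ in space uniformly in $t$ and $C^1$ in $t$, so the variational equation for $\nabla_x X$ has continuous coefficients and thus a continuous solution. Neither step is deep, but both rely in an essential way on the regularity hypotheses stated in the lemma.
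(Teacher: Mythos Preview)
The paper does not supply a proof of this lemma; it is quoted as a classical result with citations to Ladyzhenskaya--Solonnikov and Simon. Your argument via the method of characteristics is correct and is precisely the standard route taken in those references: reduce to $\partial_t\rho+\mathbf{v}\cdot\nabla\rho=0$ using $\mathrm{div}\,\mathbf{v}=0$, build the $C^1$ flow map from the $C^1$ regularity of $\mathbf{v}$, use $\mathbf{v}|_{\partial\Omega}=\mathbf{0}$ to confine trajectories to $\overline{\Omega}$, and read off existence, uniqueness, regularity, and the pointwise bounds from the representation $\rho(x,t)=\rho_0(X(0,t,x))$. There is nothing to compare beyond this, and your identification of the two delicate points (invariance of $\overline{\Omega}$ under the flow and joint $C^1$ regularity of $X$) is accurate.
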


\begin{proof}(Theorem~\ref{Theorem.2.1})
For the sake of comprehension, this proof will be split  into several subsections.

\subsection{\bf Galerkin approximations}
\label{Sec.2.1} We construct a solution to the problem
\eqref{eq.1.26}--\eqref{eq.1.30} as a limit of suitable Galerkin
approximations. One-constituent case was treated in
\cite{Antontsev-2021,deOliveira}. It is well-known the existence of
an increasing sequence of positive eigenvalues $\lambda_{k}$ and a
sequence of corresponding eigenfunctions
\begin{equation}\label{eq.2.09}
\boldsymbol{\psi}_{k}\in \CC^1(\overline{\Omega})\cap
\VV
\end{equation}
to the following problem
\begin{alignat}{2}
\label{eq.2.10}
&A\boldsymbol\psi_{k}=\lambda_k\boldsymbol\psi_{k}\quad\mbox{in}\ \
\Omega, &&
\\
\label{eq.2.11} &\boldsymbol\psi_k=\boldsymbol{0}\quad\mbox{on}\ \
\partial\Omega,
\end{alignat}
for the Stokes operator $A=-P\Delta$, where
$P:\LL^{2}(\Omega)\mapsto\HH$ is the Leray-Helmholtz operator. Since the Stokes operator is
injective, self-adjoint and has a compact inverse, the family
$\left\{\boldsymbol\psi_k\right\}_{k\in\mathbb{N}}$ can be made
orthogonal in $\HH$ and orthonormal in
$\VV$~\cite[Propositions~4.2-4]{CF:1988}. Given
$m\in\mathbb{N}$, let us consider the $m$-dimensional space
$\mathbb{X}^{m}$ spanned by the first $m$ eigenvalues satisfying
\eqref{eq.2.10}--\eqref{eq.2.11},
$\boldsymbol\psi_1,\ldots,\boldsymbol\psi_m$. For each
$m\in\mathbb{N}$, and some $T>0$, we search for approximate
solutions
\begin{alignat}{2}
\label{eq.2.12} & \mathbf{v}_1^{(m)},\dots,\mathbf{v}_n^{(m)}\in
\C^1([0,T];\mathbb{X}^{m}),
\\
\label{eq.2.13} & \rho_1^{(m)},\dots,\rho_n^{(m)}\in
\C^1([0,T];\C^1(\overline{\Omega}))
\end{alignat}
to the problem \eqref{eq.1.26}--\eqref{eq.1.30} in the form
\begin{equation}
\label{eq.2.14}
\mathbf{v}_i^{(m)}(\mathbf{x} ,t)= \sum\limits_{k=1}^{m}c_{i,k}^{m}(t)
\boldsymbol{\psi}_k(\mathbf{x}),\qquad\boldsymbol{\psi}_k\in\mathbb{X}^{m},
\end{equation}
where the coefficient functions $c_{i,1}^m,\ldots,c_{i,m}^m \in
\C^1([0,T])$ are defined as the solutions of the following $n\times
m$ ordinary differential equations derived from \eqref{eq.2.04},
\begin{equation}
\label{eq.2.15}%
\begin{split}
&
\int_{\Omega}\partial_t\big(\rho_i^{(m)}(\mathbf{x},t)\mathbf{v}_i^{(m)}(\mathbf{x},t)\big)\cdot
\boldsymbol{\psi}_k\,d\mathbf{x} +
\sum\limits_{j=1}^{n}\kappa_{ij}\int_\Omega
\nabla\partial_t\mathbf{v}_{j}^{(m)}(\mathbf{x},t):\nabla\boldsymbol{\psi}_k\,d\mathbf{x}
\\
&+
\sum\limits_{j=1}^{n}\mu_{ij}\int_\Omega\nabla\mathbf{v}_j^{(m)}(\mathbf{x},t):\nabla\boldsymbol\psi_k\,d\mathbf{x}
=\int_\Omega
\rho_i^{(m)}(\mathbf{x},t)\mathbf{f}_i(\mathbf{x},t)\cdot\boldsymbol\psi_k\,d\mathbf{x}
\\
& + \int_\Omega
\rho_i^{(m)}(\mathbf{x},t)\mathbf{v}_i^{(m)}(\mathbf{x},t)\otimes\mathbf{v}_i^{(m)}(\mathbf{x},t):\nabla\boldsymbol\psi_k\,d\mathbf{x}
\\
&+\sum\limits_{j=1}^{n}\gamma_{ij}\int_{\Omega}\left(\mathbf{v}_j^{(m)}(\mathbf{x},t)-\mathbf{v}_i^{(m)}(\mathbf{x},t)\right)\cdot\boldsymbol\psi_k\,d\mathbf{x},
\end{split}
\end{equation}
for $k\in\{1,\dots,m\}$, $i\in\{1,\dots,n\}$. Moreover,
$\rho_i^{(m)}$ satisfies the following equation in the classical sense
\begin{equation}\label{eq.2.16}
\partial_t\rho_i^{(m)}+\mathrm{div}\big(\rho_i^{(m)}\mathbf{v}_i^{(m)}\big)=0.
\end{equation}
We recall that \eqref{eq.2.15} can be written in the nonconservative
form
\begin{equation}\tag{\ref{eq.2.15}${}^\prime$}\label{eq.2.15'}
\begin{split}
&\int_{\Omega}\rho_i^{(m)}(\mathbf{x},t)\partial_t\mathbf{v}_i^{(m)}(\mathbf{x},t)\cdot\boldsymbol{\psi}_k\,d\mathbf{x}
+
\sum\limits_{j=1}^{n}\kappa_{ij}\int_\Omega\nabla\partial_t\mathbf{v}_{j}^{(m)}(\mathbf{x},t):\nabla\boldsymbol{\psi}_k\,d\mathbf{x}
\\
&
+\sum\limits_{j=1}^{n}\mu_{ij}\int_\Omega\nabla\mathbf{v}_j^{(m)}(\mathbf{x},t):\nabla\boldsymbol\psi_k\,d\mathbf{x}
\\
&
+\int_\Omega\rho_i^{(m)}(\mathbf{x},t)\mathbf{div}\left(\mathbf{v}_i^{(m)}(\mathbf{x},t)\otimes\mathbf{v}_i^{(m)}(\mathbf{x},t)\right)\cdot\boldsymbol{\psi}_k\,d\mathbf{x}
=
\\
& \int_\Omega
\rho_i(\mathbf{x},t)\mathbf{f}_i(\mathbf{x},t)\cdot\boldsymbol\psi_k\,d\mathbf{x}+
\sum\limits_{j=1}^{n}\gamma_{ij}\int_{\Omega}\left(\mathbf{v}_j(\mathbf{x},t)-\mathbf{v}_i(\mathbf{x},t)\right)\cdot\boldsymbol\psi_k\,d\mathbf{x},
\end{split}
\end{equation}
for $k\in\{1,\dots,m\}$ and $i\in\{1,\dots,n\}$.

System \eqref{eq.2.15'}--\eqref{eq.2.16} is supplemented with the
following initial conditions
\begin{equation}\label{eq.2.17}
\mathbf{v}_i^{(m)}(\mathbf{x},0)=\mathbf{v}_{i,0}^{(m)}(\mathbf{x}),\quad
\rho_i^{(m)}(\mathbf{x},0)=\rho_{i,0}^{(m)}(\mathbf{x})\quad \mbox{in}\ \ \Omega,\qquad
i\in\{1,\dots,n\},
\end{equation}
where, for each $i\in\{1,\dots,n\}$,
$\mathbf{v}_{i,0}^{(m)}:=P^m(\mathbf{v}_{i,0})$, and $P^m$ denotes the orthogonal projection
$P^m:\VV\longrightarrow\mathbb{X}^m$, which is defined by
\begin{equation*}
P^m(\mathbf{v})=\sum\limits_{k=1}^{m}\left(\mathbf{v},\boldsymbol\psi_k\right)\boldsymbol\psi_k\,.
\end{equation*}
Since the operator $P^m$ is uniformly continuous, we can
assume that
\begin{equation}\label{eq.2.18}
\mathbf{v}_{i,0}^{(m)}\xrightarrow[m\to+\infty]{}\mathbf{v}_{i,0}\quad \mbox{in}\
\ \VV,\qquad i\in\{1,\dots,n\}.
\end{equation}

Thus, from \eqref{eq.2.18}, one easily has
\begin{alignat}{2} &
\|\mathbf{v}_{i,0}^{(m)}\|_{2}\leq \|\mathbf{v}_{i,0}\|_{2},\qquad
\|\nabla\mathbf{v}_{i,0}^{(m)}\|_{2}\leq
\|\nabla\mathbf{v}_{i,0}\|_{2},\qquad i\in\{1,\dots,n\}. &&
\label{eq.2.19}
\end{alignat}
Note that, since
$\mathbf{v}_{1,0}^{(m)},\dots,\mathbf{v}_{n,0}^{(m)}\in\mathbb{X}^m$, for each
$i\in\{1,\dots,n\}$ there exist
$c_{i,1}^{0},\ldots,c_{i,m}^{0}\in\mathbb{R}$ such that
\begin{equation*}
\mathbf{v}_{i,0}^{(m)}(\mathbf{x})=\sum\limits_{k=1}^mc_{i,k}^{0}
\boldsymbol{\psi}_k(\mathbf{x}),
\end{equation*}
and, from \eqref{eq.2.14},
\begin{equation*}
\mathbf{v}_i^{(m)}(\mathbf{x},0)=
\sum\limits_{k=1}^{m}c_{i,k}(0)\boldsymbol{\psi}_k(\mathbf{x}).
\end{equation*}
Hence, from \eqref{eq.2.17}$_1$, we deduce that
\begin{equation*}
c_{i,k}(0)=c_{i,k}^{0},\quad k\in\{1,\dots,m\},\quad
i\in\{1,\dots,n\}.
\end{equation*}
With respect to the initial approximate densities
$\rho_{1,0}^{(m)},\dots,\rho_{n,0}^{(m)}$, we assume that, for each
$i\in\{1,\dots,n\}$,
\begin{alignat}{2}
\label{eq.2.20} & \rho_{i,0}^{(m)}\in \C^1(\overline{\Omega}),
\\
\label{eq.2.21}&
\rho_{i,0}^{(m)}\xrightarrow[m\to+\infty]{}\rho_{i,0}\quad
\mbox{in}\ \
\L^p(\Omega)\quad \forall\,p\in[1,+\infty), \\
\label{eq.2.22} &
0<\rho^{-}\leq\rho_{i,0}(\mathbf{x})\leq\rho^+<\infty\quad \forall\,
\mathbf{x}\in\overline{\Omega}.
\end{alignat}

For $i\in\{1,\dots,n\}$, and in view of \eqref{eq.2.12},
\eqref{eq.2.14}, \eqref{eq.2.20}--\eqref{eq.2.22},
Lemma~\ref{lem:LS} associates each $\mathbf{v}_i^{(m)}$ with a unique
density $\rho_i^{(m)}\in \C^1([0,T_m];\C^1(\overline{\Omega}))$, cf.
\eqref{eq.2.13}, which then satisfies \eqref{eq.2.16} in the
classical sense. Moreover, from \eqref{eq.2.08} and \eqref{eq.2.09}
\begin{equation}
\label{eq.2.23}
0<\rho^-\leq\rho_i^{(m)}(\mathbf{x},t)\leq\rho^+\quad \forall\, (\mathbf{x},t)\in
Q_T,
\end{equation}
for any $i\in\{1,\dots,n\}$.

Fixing $\rho_i^{(m)}$, we construct $\mathbf{v}_i^{(m)}$ in the form
\eqref{eq.2.14}, by solving the Galerkin system
\eqref{eq.2.15}{}$^\prime$. This in turn can be written in the
following matrix form
\begin{equation}\label{eq.2.24}
\sum\limits_{j=1}^{n}\mathbf{A}_{ij}(t)\dot{\mathbf{c}}_j(t)
=\mathbf{b}_i(t), \quad\mathbf{c}_i(0)=\mathbf{c}_{i,0},\qquad
i\in\{1,\dots,n\},
\end{equation}
where, for each $i\in\{1,\dots,n\}$,
\begin{equation*}
\mathbf{A}_{ij}(t)=\left\{a_{ij,kl}^m(t)\right\}_{k,l=1}^m,\
\mathbf{c}_i=\{c_{i,1}^m,\ldots,c_{i,m}^m\},\
\mathbf{c}_{i,0}=\{c_{i,1}^{0},\ldots,c_{i,m}^{0}\},\
\mathbf{b}_i=\{b_{i,1},\ldots,b_{i,m}\},
\end{equation*}
with
\begin{align*}
& a_{ii,kl}^m(t):=
\int_{\Omega}\rho_i^{(m)}\boldsymbol{\psi}_k\cdot\boldsymbol{\psi}_{l}\,d\mathbf{x}
+\kappa_{ii}\int_\Omega\nabla\boldsymbol{\psi}_k:\nabla\boldsymbol{\psi}_{l}\,d\mathbf{x},
\\
&
a_{ij,kl}^m(t)=a_{ji,kl}^m(t):=\kappa_{ij}\int_\Omega\nabla\boldsymbol{\psi}_k:
\nabla\boldsymbol{\psi}_{l}\,d\mathbf{x},\quad i\neq j,
\\
& b_{i,k}^m(t):=\int_\Omega
\rho_i^{(m)}\mathbf{f}_i\cdot\boldsymbol\psi_k\,d\mathbf{x}
+\sum\limits_{j=1}^{n}\gamma_{ij}\int_{\Omega}\left(\mathbf{v}_j^{(m)}
-\mathbf{v}_i^{(m)}\right)\cdot\boldsymbol\psi_k\,d\mathbf{x}
\\
&
-\int_\Omega\rho_i^{(m)}\mathbf{div}(\mathbf{v}_i^{(m)}\otimes\mathbf{v}_i^{(m)})\cdot\boldsymbol{\psi}_k\,d\mathbf{x}
-\sum\limits_{j=1}^{n}\mu_{ij}\int_\Omega
\nabla\mathbf{v}_j^{(m)}:\nabla\boldsymbol\psi_k\,d\mathbf{x},
\end{align*}
where $i,j\in\{1,\ldots,n\}$. Taking into account that the family
$\left\{\boldsymbol{\psi}_{k}\right\}_{k\in\mathbb{N}}$ is linearly
independent, then, in view of \eqref{eq.2.23},
\[
\sum\limits_{i,j=1}^n\mathbf{A}_{ij}(t)\boldsymbol{\xi}_i\cdot
\boldsymbol{\xi}_j>0
\]
for all
$\boldsymbol{\xi}_i\in\mathbb{R}^m\setminus\{{\mathbf{0}}\}$,
$i\in\{1,\ldots,n\}$. Let
\begin{equation*}
\mathbf{A}=\left[\begin{array}{lll} \mathbf{A}_{11}&
\cdots&\mathbf{A}_{1n}
\\
\,\,\vdots & \ddots &\,\,\vdots
\\
\mathbf{A}_{n1}&\cdots &\mathbf{A}_{nn}
\end{array}\right],\,\,\mathbf{c}=(\mathbf{c}_1,\ldots,\mathbf{c}_n),\,\,\mathbf{c}_0=(\mathbf{c}_{1,0},\ldots,\mathbf{c}_{n,0}),\,\,\mathbf{b}=(\mathbf{b}_1,\ldots,\mathbf{b}_n).
\end{equation*}
Thus, we can write \eqref{eq.2.24} in the form
\begin{equation} \label{eq.2.25}
\frac{d\mathbf{c}}{dt}=\mathbf{A}^{-1}\mathbf{b},\quad\mathbf{c}(0)=\mathbf{c}_{0}.
\end{equation}
Setting
\begin{equation*}
\mathbf{g}(t,\mathbf{c}(t)):=\mathbf{A}^{-1}(t)\mathbf{b}(t,\mathbf{c}(t))
\end{equation*}
and considering a ball
\[
B_\varepsilon(\mathbf{c}_0)=\{\boldsymbol{\eta}\in\mathbb{R}^{nm}:|\boldsymbol{\eta}-\mathbf{c}_0|<\varepsilon\},
\]
for some $\varepsilon>0$, we can prove that $\mathbf{g}(\cdot,{\boldsymbol{\eta}})$ is
measurable for all ${\boldsymbol{\eta}}\in B_\varepsilon(\mathbf{c}_0)$, $\mathbf{g}(t,\cdot)$
is continuous for a.e. $t\in (0,T)$, $|\mathbf{g}(t,{\boldsymbol{\eta}})|\leq F(t)$ for
all $(t,{\boldsymbol{\eta}})\in(0,T)\times B_\varepsilon(\mathbf{c}_0)$ and some
integrable function $F$ in $(0,T)$. Then, by the Carath\'{e}odory
theorem, there exists $T_m\in(0,T)$ and an absolutely continuous
function $\mathbf{c}:(0,T_m)\longrightarrow\mathbb{R}^{nm}$ such that $\mathbf{c}$
solves the Cauchy problem \eqref{eq.2.25}.

By \eqref{eq.2.23},
\begin{equation}\label{eq.2.26}
0<\rho^-\leq\rho_i^{(m)}(\mathbf{x},\tau)\leq\rho^+\quad \forall\ (\mathbf{x},\tau)\in
Q_{t}:=\Omega\times[0,t].
\end{equation}
As $\mathbb{X}^{m}$ is finite dimensional, the $\L^\infty$ and $\L^2$
norms are equivalent in $\mathbb{X}^{m}$, we can combine
\eqref{eq.2.28} below and \eqref{eq.2.26}  so that
\begin{equation}\label{eq.2.27}
\sup_{t\in[0,T_m]}\left(\|\mathbf{v}_i^{(m)}(\cdot,t)\|_{\infty} +
\|\nabla \mathbf{v}_i^{(m)}(\cdot,t)\|_{\infty}\right)\leq C(m).
\end{equation}
Finally, \eqref{eq.2.26} and \eqref{eq.2.27}  enable us to use the
Continuation Principle (see~\cite[Theorem~1.4.1]{CL:1955}) to extend
the solution $\mathbf{c}$ of the Cauchy problem \eqref{eq.2.25} onto the
whole time interval $[0,T]$.

\subsection{\bf Uniform estimates for $\mathbf{v}_i^{(m)}$ and $\nabla\mathbf{v}_i^{(m)}$}
\begin{lemma}\label{Lem.2.2}
For each $i\in\{1,\ldots,n\}$, let $\mathbf{v}_i^{(m)}$ be the Galerkin
approximation considered in \eqref{eq.2.14}. If the conditions of
Theorem~\ref{Theorem.2.1} are satisfied, and, in addition,
$\kappa_{ij}=\kappa_{ji}, \ \gamma_{ij}=\gamma_{ji},$
 then there
exists a positive constant $C=C(\rho^\pm,\mu^+,\kappa^+,\gamma^+)$
such that
\begin{align}
\label{eq.2.28}
\begin{split}
&\sum\limits_{i=1}^{n}\left(\sup_{t\in[0,T]}\|\mathbf{v}_i^{(m)}(\cdot,t)\|_{2}^2+\sup_{t\in[0,T]}\|\nabla\mathbf{v}_i^{(m)}(\cdot,t)\|_{2}^2\right)
+\sum\limits_{i=1}^{n}\int_{0}^{T}\|\nabla\mathbf{v}_i^{(m)}
(\cdot,t)\|_{2}^2\,dt
\leq
\\
&
C\left(\sum\limits_{i=1}^{n}\left(\|\mathbf{v}_{i,0}\|_{2}^2+\|\nabla\mathbf{v}_{i,0}\|_{2}^2\right)+
\sum\limits_{i=1}^{n}
\int_{0}^{T}\|\mathbf{f}_i(\cdot,t)\|_{2}^2\,dt\right):=\Xi_1.
\end{split}
\end{align}
\end{lemma}
\begin{proof}
For each $i\in\{1,\ldots,n\}$, { and at any time $t$,} we multiply
the $k$-th equation of \eqref{eq.2.15} by $2c_{i,k}^m(\cdot,t)$ and add up
the resulting equation, first from $k=1$ till $k=m$, and then sum
from $i=1$ till $i=n$, to obtain
\begin{equation}
\label{eq.2.29}%
\begin{split}
&
\frac{d}{dt}\left(\sum\limits_{i=1}^n
\int_{\Omega}\rho_i^{(m)}(\mathbf{x},t)|\mathbf{v}_i^{(m)}(\mathbf{x},t)|^2\,d\mathbf{x}
+\sum\limits_{i,j=1}^n\kappa_{ij}\int_{\Omega}\nabla
\mathbf{v}_i^{(m)}(\mathbf{x},t):\nabla\mathbf{v}_j^{(m)}(\mathbf{x},t)
\,d\mathbf{x} \right)
\\
&
+\sum\limits_{i=1}^n\int_{\Omega}\partial_t\rho_i^{(m)}(\mathbf{x},t)|\mathbf{v}_i^{(m)}(\mathbf{x},t)|^2\,d\mathbf{x}
+2\sum\limits_{i,j=1}^n\mu_{ij}\int_{\Omega}\nabla\mathbf{v}_i^{(m)}(\mathbf{x},t):\nabla\mathbf{v}_j^{(m)}(\mathbf{x},t)
\,d\mathbf{x}
\\
& +\sum\limits_{i=1}^{n}\int_\Omega
2\mathbf{div}\left(\rho_i^{(m)}(\mathbf{x},t)
\mathbf{v}_i^{(m)}(\mathbf{x},t)\otimes\mathbf{v}_i^{(m)}(\mathbf{x},t)\right)\cdot\mathbf{v}_i^{(m)}(\mathbf{x},t)\,d\mathbf{x}=
\\
& 2\sum\limits_{i=1}^{n}\int_\Omega
\rho_i^{(m)}(\mathbf{x},t)\mathbf{f}_i(\mathbf{x},t)\cdot\mathbf{v}_i^{(m)}(\mathbf{x},t)\,d\mathbf{x}-\sum\limits_{i,j=1}^{n}\gamma_{ij}\int_{\Omega}\left|\mathbf{v}_i^{(m)}(\mathbf{x},t)-\mathbf{v}_j^{(m)}(\mathbf{x},t)\right|^2\,d\mathbf{x}.
\end{split}
\end{equation}

Here, we have used the following formulas
\begin{alignat*}{2}
&\partial_t\left(\rho_i^{(m)}(\mathbf{x},t)|\mathbf{v}_i^{(m)}(\mathbf{x},t)|^2\right)=
2\rho_i^{(m)}(\mathbf{x},t) \mathbf{v}_i^{(m)}(\mathbf{x},t)\cdot
\partial_t\mathbf{v}_i^{(m)}(\mathbf{x},t)+\partial_t\rho_i^{(m)}(\mathbf{x},t)
|\mathbf{v}_i^{(m)}(\mathbf{x},t)|^2,
\\
&2\mathbf{v}_i^{(m)}(\mathbf{x},t)\cdot\partial_t
\left(\rho_i^{(m)}(\mathbf{x},t)
\mathbf{v}_i^{(m)}(\mathbf{x},t)\right)= 2\rho_i^{(m)}(\mathbf{x},t)
\mathbf{v}_i^{(m)}(\mathbf{x},t)\cdot
\partial_t\mathbf{v}_i^{(m)}(\cdot,t)
\\
&+2\partial_t\rho_i^{(m)}(\mathbf{x},t)
|\mathbf{v}_i^{(m)}(\mathbf{x},t)|^2 =
\partial_t\left(\rho_i^{(m)}(\mathbf{x},t)|\mathbf{v}_i^{(m)}(\mathbf{x},t)|^2\right)
+\partial_t\rho_i^{(m)}(\mathbf{x},t)|\mathbf{v}_i^{(m)}(\mathbf{x},t)|^2,
\\
& \int_{\Omega} \sum\limits_{i=1}^{n}\nabla\mathbf{v}_i^{(m)}(\mathbf{x},t):
\sum\limits_{j=1}^{n}\kappa_{ij}\nabla\partial_t\mathbf{v}_{j}^{(m)}(\mathbf{x},t)\,d\mathbf{x}
= \frac12\frac{d}{dt}\int_{\Omega}\sum\limits_{i,j=1}^{n}\kappa_{ij}
\nabla\mathbf{v}_i^{(m)}(\mathbf{x},t) :\nabla\mathbf{v}_j^{(m)}(\mathbf{x},t)\,d\mathbf{x},
\\
&
2\sum\limits_{i,j=1}^{n}\gamma_{ij}\int_{\Omega}\left(\mathbf{v}_j^{(m)}(\mathbf{x},t)-\mathbf{v}_i^{(m)}(\mathbf{x},t)\right)
\cdot\mathbf{v}_i^{(m)}(\mathbf{x},t)\,d\mathbf{x}=-\sum\limits_{i,j=1}^{n}\gamma_{ij}\int_{\Omega}\left|\mathbf{v}_i^{(m)}(\mathbf{x},t)-\mathbf{v}_j^{(m)}(\mathbf{x},t)\right|^2\,d\mathbf{x},
\end{alignat*}
where $\kappa_{ij}=\kappa_{ji}$, $\gamma_{ij}=\gamma_{ji}$.
Now, at any time $t$, we multiply the equation \eqref{eq.2.16} by
$|\mathbf{v}_i^{(m)}(\mathbf{x},t)|^2$ and integrate over $\Omega$ so that
\begin{equation}\label{eq.2.30}
\begin{split}
&\sum\limits_{i=1}^{n}\int_\Omega\partial_t\rho_i^{(m)}(\mathbf{x},t)|\mathbf{v}_i^{(m)}(\mathbf{x},t)|^2\,d\mathbf{x}+
\sum\limits_{i=1}^{n}\int_\Omega\mathrm{div}\left(\rho_i^{(m)}(\mathbf{x},t)\mathbf{v}_i^{(m)}(\mathbf{x},t)\right)|\mathbf{v}_i^{(m)}(\mathbf{x},t)|^2\,d\mathbf{x}=0.
\end{split}
\end{equation}

With the help of the following formulas
\begin{align}
\label{eq.A}
\begin{split}
&
\mathbf{div}\left(\rho_i^{(m)}\mathbf{v}_i^{(m)}\otimes\mathbf{v}_i^{(m)}\right)\cdot\mathbf{v}_i^{(m)}
=\mathrm{div}\left(\rho_i^{(m)}\mathbf{v}_i^{(m)}\right)|\mathbf{v}_i^{(m)}|^2 +
\rho_i^{(m)}(\mathbf{v}_i^{(m)}\cdot{\nabla})\mathbf{v}_i^{(m)}\cdot\mathbf{v}_i^{(m)},
\\
& \mathrm{div}\left(\rho_i^{(m)}|\mathbf{v}_i^{(m)}|^2\mathbf{v}_i^{(m)}\right)=
\mathrm{div}\left(\rho_i^{(m)}\mathbf{v}_i^{(m)}\right)|\mathbf{v}_i^{(m)}|^2
+2\rho_i^{(m)}(\mathbf{v}_i^{(m)}\cdot{\nabla})\mathbf{v}_i^{(m)}\cdot
\mathbf{v}_i^{(m)},
\end{split}
\end{align}
we get
\begin{align}
\label{eq.AA}
\begin{split}
&2\int_\Omega\mathbf{div}\left(\rho_i^{(m)}\mathbf{v}_i^{(m)}\otimes\mathbf{v}_i^{(m)}\right)\cdot\mathbf{v}_i^{(m)}\,d\mathbf{x}=
 \int_\Omega\mathrm{div}\left(\rho_i^{(m)}|\mathbf{v}_i^{(m)}|^2\mathbf{v}_i^{(m)}\right)\,d\mathbf{x}
\\
&\quad+\int_\Omega\mathrm{div}\left(\rho_i^{(m)}\mathbf{v}_i^{(m)}\right)|\mathbf{v}_i^{(m)}|^2\,d\mathbf{x}=
\int_\Omega\mathrm{div}\left(\rho_i^{(m)}\mathbf{v}_i^{(m)}\right)|\mathbf{v}_i^{(m)}|^2\,d\mathbf{x}.
\end{split}
\end{align}

Subtracting \eqref{eq.2.30} from \eqref{eq.2.29}, we get
\begin{align}\label{eq.2.31}
\begin{split}
& \frac{d}{dt}\int_\Omega\left(
\sum\limits_{i=1}^{n}\rho_i^{(m)}(\mathbf{x},t)
|\mathbf{v}_i^{(m)}(\mathbf{x},t)|^2
+\sum\limits_{i,j=1}^{n} \kappa_{ij}\nabla\mathbf{v}_i^{(m)}
(\mathbf{x},t):\nabla\mathbf{v}_j^{(m)}(\mathbf{x},t)\right)\,d\mathbf{x}\\
& +
2\sum\limits_{i,j=1}^{n}\mu_{ij}\int_\Omega\nabla\mathbf{v}_i^{(m)}(\mathbf{x},t):\nabla\mathbf{v}_j^{(m)}(\mathbf{x},t)\,d\mathbf{x}
=
\\
& 2\sum\limits_{i=1}^{n}\int_\Omega
\rho_i^{(m)}(\mathbf{x},t)\mathbf{f}_i(\mathbf{x},t)\cdot\mathbf{v}_i^{(m)}(\mathbf{x},t)\,d\mathbf{x}
-\sum\limits_{i,j=1}^{n}\gamma_{ij}\int_{\Omega}\left|\mathbf{v}_i^{(m)}(\mathbf{x},t)-\mathbf{v}_j^{(m)}(\mathbf{x},t)\right|^2\,d\mathbf{x}.
\end{split}
\end{align}
Let us introduce the energy function
\begin{equation*}
Y_1(t)=\int_\Omega\left(
\sum\limits_{i=1}^{n}\rho_i^{(m)}(\mathbf{x},t)|\mathbf{v}_i^{(m)}(\mathbf{x},t)|^2+\sum\limits_{i,j=1}^{n}
\kappa_{ij}\nabla\mathbf{v}_i^{(m)}(\mathbf{x},t):\nabla\mathbf{v}_j^{(m)}(\mathbf{x},t)\right)\,d\mathbf{x}.
\end{equation*}

Observe that by using assumptions \eqref{eq.1.15},
\eqref{eq.1.24}--\eqref{eq.1.25}, \eqref{eq.2.02}, estimate
\eqref{eq.2.23}, the H\"older, Sobolev and Cauchy inequalities, one
has
\begin{alignat}{2}
\label{eq.2.32} &
2\sum\limits_{i,j=1}^{n}\mu_{ij}\int_\Omega\nabla\mathbf{v}_i^{(m)}(\mathbf{x},t):\nabla\mathbf{v}_j^{(m)}(\mathbf{x},t)\,d\mathbf{x}
\geq 2\mu^-
\sum\limits_{i=1}^{n}\|\nabla\mathbf{v}_i^{(m)}(\cdot,t)\|_{2}^2,
\\
\label{eq.2.33}
\begin{split}
&Y_1(t)\geq \sum\limits_{i,j=1}^{n}
\kappa_{ij}\int_\Omega\nabla\mathbf{v}_i^{(m)}(\mathbf{x},t):
\nabla\mathbf{v}_j^{(m)}(\mathbf{x},t)\,d\mathbf{x} \geq \kappa^-\sum\limits_{i=1}^{n}
\|\nabla\mathbf{v}_i^{(m)}(\cdot,t)\|_{2}^2,
\end{split}
\\
\label{eq.2.34} & 2\sum\limits_{i=1}^{n}\int_\Omega
\rho_i^{(m)}(\mathbf{x},t)\mathbf{f}_i(\mathbf{x},t)\cdot\mathbf{v}_i^{(m)}(\mathbf{x},t)\,d\mathbf{x} \leq
\rho^+\sum\limits_{i=1}^{n}\|\mathbf{f}_i(\cdot,t)\|_{2}^2+Y_1(t).
\end{alignat}
Using \eqref{eq.2.32}--\eqref{eq.2.34} into \eqref{eq.2.31}, we
arrive at the following ordinary differential inequality
\begin{equation*}
\begin{split}
&\frac{dY_{1}(t)}{dt}
+2\mu^{-}\sum\limits_{i=1}^{n}\|\nabla\mathbf{v}_i^{(m)}(\cdot,t)\|_{2}^2
\leq
Y_1(t)+\rho^+\sum\limits_{i=1}^{n}\|\mathbf{f}_i(\cdot,t)\|_{2}^2.
\end{split}
\end{equation*}
From here, we get
  \begin{equation}\label{eq.2.35}
Y_{1}(t)\leq e^t \rho^+\int_0^te^{-s} \sum\limits_{i=1}^{n}
\|\mathbf{f}_i(\cdot,s)\|_{2}^2\,ds+e^{t}Y_{1}(0).
  \end{equation}
Integrating \eqref{eq.2.31} with respect to $t$ and using
\eqref{eq.2.35}, we arrive at the estimate \eqref{eq.2.28}.
Lemma~\ref{Lem.2.2} is thus proved.
\end{proof}

\subsection{\bf Uniform estimates for $\partial_t\mathbf{v}_i^{(m)}$ and
$\nabla\partial_t\mathbf{v}_i^{(m)}$}
\begin{lemma} \label{Lem.2.3}
{ For each $i\in\{1,\ldots,n\}$, let $\mathbf{v}_i^{(m)}$ be the Galerkin
approximation considered in \eqref{eq.2.14}. If the conditions
of Theorem~\ref{Theorem.2.1} are satisfied, then} there exists a
positive constant
$C=C(\rho^\pm,\kappa^-,\mu^+,\gamma^+,T,\Xi_1)$ such that {
\begin{equation}\label{eq.2.36}
\begin{split}
&\sum\limits_{i=1}^{n}\int_0^T\left(\|\sqrt{\rho_i^{(m)}(\cdot,t)}\partial_t\mathbf{v}_i^{(m)}(\cdot,t)\|^2_{2}
+\kappa^+\|\nabla\partial_t\mathbf{v}_i^{(m)}(\cdot,t)\|_{2}^2\right)dt
\leq
\\
&C\left(1+\sum\limits_{i=1}^{n}\int_{0}^{T}\|\mathbf{f}_i(\cdot,t)\|_{2}^2\,dt
+\sum\limits_{i=1}^{n}\|\mathbf{v}_{i,0}\|_{2}^2\right):=\Xi_2.
\end{split}
\end{equation}
}
\end{lemma}
\begin{proof}
We multiply the $k$-th equation of \eqref{eq.2.15} by $2\frac{dc_{i,k}^{m}(\cdot,t)}{dt}$, and add up the resulting equation for
$k\in\{1,\ldots,m\}$ and $i\in\{1,\ldots,n\}$, to obtain
\begin{equation}\label{eq.2.37}
\begin{split}
&2\sum\limits_{i=1}^{n}\int_\Omega\rho_i^{(m)}(\mathbf{x},t)|\partial_t\mathbf{v}_i^{(m)}(\mathbf{x},t)|^2\,d\mathbf{x}
+
2\sum\limits_{i,j=1}^{n}\kappa_{ij}\int_\Omega\nabla\partial_t
\mathbf{v}_i^{(m)}(\mathbf{x},t):\nabla\partial_t\mathbf{v}_{j}^{(m)}(\mathbf{x},t)\,
d\mathbf{x}
=
\\
&-2\sum\limits_{i=1}^{n}\int_\Omega\mathbf{div}\left(\rho_i^{(m)}(\mathbf{x},t)\mathbf{v}_i^{(m)}(\mathbf{x},t)\otimes\mathbf{v}_i^{(m)}(\mathbf{x},t)\right)\cdot\partial_t\mathbf{v}_i^{(m)}(\mathbf{x},t)\,d\mathbf{x}
\\
&-2\sum\limits_{i,j=1}^{n}\mu_{ij}\int_\Omega
\nabla\mathbf{v}_j^{(m)}(\mathbf{x},t):\nabla\partial_t\mathbf{v}_i^{(m)}(\mathbf{x},t)\,d\mathbf{x}+
2\sum\limits_{i=1}^{n}\int_\Omega
\rho_i^{(m)}(\mathbf{x},t)\mathbf{f}_i(\mathbf{x},t)\cdot\partial_t
\mathbf{v}_i^{(m)}(\mathbf{x},t)\,d\mathbf{x}
\\
&+2\sum\limits_{i,j=1}^{n}\gamma_{ij}\int_{\Omega}\left(\mathbf{v}_j^{(m)}(\mathbf{x},t)-\mathbf{v}_i^{(m)}(\mathbf{x},t)\right)\cdot\partial_t\mathbf{v}_i^{(m)}(\mathbf{x},t)\,d\mathbf{x}.
\end{split}
\end{equation}
Using assumptions \eqref{eq.1.24}--\eqref{eq.1.25}, estimate
\eqref{eq.2.23}, the H\"older, Sobolev and Cauchy inequalities, we
can show that for $d\in\{2,3\}$ we have
\begin{equation}\label{eq.2.38}
\begin{split}
&\left|-2\sum\limits_{i=1}^{n}\int_\Omega
\rho^{(m)}_i(\mathbf{x},t)\,\mathbf{div}\left(\mathbf{v}^{(m)}_i(\mathbf{x},t)\otimes\mathbf{v}^{(m)}_i(\mathbf{x},t)\right)\cdot\partial_t\mathbf{v}_i^{(m)}(\mathbf{x},t)\,d\mathbf{x}
\right|\leq
\\
&\frac{2}{\kappa^-}\left(\rho^+\right)^2\sum\limits_{i=1}^{n}\|\nabla\mathbf{v}_i^{(m)}(\cdot,t)\|_{2}^4
+
\frac{1}{2}\sum\limits_{i,j=1}^{n}\kappa_{ij}\int_\Omega
\nabla\partial_t\mathbf{v}_i^{(m)}(\mathbf{x},t):\nabla\partial_t\mathbf{v}_{j}^{(m)}(\mathbf{x},t)\,d\mathbf{x}
\end{split}
\end{equation}
and
\begin{equation}\label{eq.2.39}
\begin{split}
&\left|-2\sum\limits_{i,j=1}^{n}\mu_{ij}\int_\Omega
\nabla\mathbf{v}_j^{(m)}(\mathbf{x},t):\nabla\partial_t\mathbf{v}_i^{(m)}(\mathbf{x},t)\,d\mathbf{x}\right|\leq
\frac{(\mu^{+})^2}{\kappa^-}
\sum\limits_{i=1}^{n}\|\nabla\mathbf{v}_i^{(m)}(\cdot,t)\|_{2}^2
\\
&\quad+\sum\limits_{i,j=1}^{n}\kappa_{ij}
\int_\Omega\nabla\partial_t\mathbf{v}_i^{(m)}(\mathbf{x},t)
:\nabla\partial_t\mathbf{v}_{j}^{(m)}(\mathbf{x},t)\,d\mathbf{x}.
\end{split}
\end{equation}
By the
H\"older and Cauchy inequalities,
\begin{equation}\label{eq.2.40}
\begin{split}
&2\sum\limits_{i=1}^{n}\int_\Omega
\rho_i^{(m)}(\mathbf{x},t)\mathbf{f}_i(\mathbf{x},t)\cdot\partial_t\mathbf{v}_i^{(m)}(\mathbf{x},t)\,d\mathbf{x}\leq
\rho^+\sum\limits_{i=1}^{n}\|\mathbf{f}_i(\cdot,t)\|_{2}^2
+\sum\limits_{i=1}^{n}\int_\Omega\rho_i^{(m)}(\mathbf{x},t)|\partial_t\mathbf{v}_i^{(m)}(\mathbf{x},t)|^2\,d\mathbf{x}.
\end{split}
\end{equation}
Finally, using assumption  \eqref{eq.2.02}, one has
\begin{equation}\label{eq.2.41}
\begin{split}
&2\sum\limits_{i,j=1}^{n}\gamma_{ij}\int_{\Omega}\left(\mathbf{v}_j^{(m)}(\mathbf{x},t)-\mathbf{v}_i^{(m)}(\mathbf{x},t)\right)\cdot\partial_t\mathbf{v}_i^{(m)}(\mathbf{x},t)\,d\mathbf{x}\leq
\\
&\frac{\gamma^+}{\delta}\sum\limits_{i=1}^{n}\|\mathbf{v}_i^{(m)}(\cdot,t)\|_{2}^2+\frac{\gamma^+\delta}{\rho^-}\sum\limits_{i=1}^{n}\|
\sqrt{\rho_i^{(m)}(\cdot,t)}\partial_t\mathbf{v}_i^{(m)}(\cdot,t)\|_{2}^2.
\end{split}
\end{equation}
Using \eqref{eq.2.38}--\eqref{eq.2.41} in \eqref{eq.2.37}, and
integrating the resulting inequality between $0$ and $t\in(0,T)$, we
obtain
\begin{equation}\label{eq.2.42}
\begin{split}
& \left(1-\frac{\gamma^+\delta}{\rho^-}\right)
\sum\limits_{i=1}^{n}\int_{0}^{t}\int_\Omega\rho_i^{(m)}|\partial_t\mathbf{v}_i^{(m)}|^2\,d\mathbf{x}
d\tau
+\sum\limits_{i,j=1}^n\kappa_{ij}\int_{0}^{t}\int_\Omega\nabla\partial_t\mathbf{v}_i^{(m)}:
\nabla\partial_t\mathbf{v}_{j}^{(m)}\,d\mathbf{x} d\tau
\\
\leq
&
\frac{2}{\kappa^-}(\rho^+)^2\sum\limits_{i=1}^{n}\int_{0}^{t}\|\nabla\mathbf{v}_i^{(m)}(\cdot,\tau)\|_{2}^4\,d\tau
+ \frac{2(\mu^+)^2}{\kappa^-}
\sum\limits_{i=1}^{n}\int_{0}^{t}\|\nabla\mathbf{v}_i^{(m)}(\cdot,\tau)\|_{2}^2\,d\tau\
+
\\
&\sum\limits_{i=1}^{n}\rho^+\int_{0}^{t}\|\mathbf{f}_i(\cdot,\tau)\|_{2}^2\,d\tau+
\frac{\gamma^+}{\delta}\sum\limits_{i=1}^{n}\int_{0}^{t}\|\mathbf{v}_i^{(m)}(\cdot,\tau)\|_{2}^2\,d\tau.
\end{split}
\end{equation}
Taking the supreme of \eqref{eq.2.42} in $[0,T]$, using
 the Minkowski inequality, \eqref{eq.2.19}, \eqref{eq.2.23}
 and \eqref{eq.2.28}, we achieve \eqref{eq.2.36}.
\end{proof}

\subsection{\bf Compactness arguments}\label{sec2.4}

From Lemmas \ref{Lem.2.2}--\ref{Lem.2.3}, and for each
$i\in\{1,\ldots,n\}$, we have:
\begin{alignat}{2}
\label{eq.2.43} &
\mbox{$\mathbf{v}_i^{(m)}$ is uniformly bounded in $\L^2(0,T;\WW^{1,2}_0(\Omega))\cap \L^\infty(0,T;\WW^{1,2}_0(\Omega))$}; \\
\label{eq.2.44}& \mbox{$\partial_t\mathbf{v}_i^{(m)}$ is uniformly
bounded in ${\L^2}(0,T;\WW^{1,2}_0(\Omega))$}.
\end{alignat}
Then, due to the compact embedding {$\WW^{1,2}_0(\Omega)
\hookrightarrow\hookrightarrow \LL^{2}(\Omega)$, we can use
\eqref{eq.2.43}--\eqref{eq.2.44}and the Aubin-Lions compactness
lemma so that, eventually passing to a subsequence,
\begin{equation}\label{eq.2.45}
\mathbf{v}_i^{(m)} \xrightarrow[m\to+\infty]{}\mathbf{v}_i \quad \mbox{in}\ \
\L^2(0,T;\LL^{2}(\Omega)),\qquad i\in\{1,\ldots,n\}.
\end{equation}

On the other hand, we note that from \eqref{eq.2.23}, and for each
$i\in\{1,\dots,n\}$,
\begin{equation}\label{eq.2.46}
\rho_i^{(m)}\ \ \mbox{is uniformly bounded in}\
\L^\lambda(0,T;\L^\lambda(\Omega))\quad \forall\
\lambda\in\left[1,+\infty\right].
\end{equation}
By using \eqref{eq.2.46} and the Banach-Alaoglu theorem, there holds
for each $i\in\{1,\dots,n\}$, eventually passing to a subsequence,
\begin{equation}\label{eq.2.47}
\rho_i^{(m)}\xrightharpoonup[m\to+\infty]{}\rho_i\quad \mbox{in}\ \
\L^\lambda(0,T;\L^\lambda(\Omega))\quad \forall\
\lambda\in\left(1,+\infty\right),
\end{equation}
where $\rho_i$ satisfies \eqref{eq.1.28}, and moreover
\begin{equation}\label{eq.2.48}
0<\rho^-\leq\rho_i(\mathbf{x},t)\leq\rho^+\quad \forall\, (\mathbf{x},t)\in Q_T,
\end{equation}
for any $i\in\{1,\dots,n\}$.

Now, using the equation \eqref{eq.2.16}, together with the estimates \eqref{eq.2.23} and \eqref{eq.2.28}, it can be proved
that
\begin{equation}\label{eq.2.49}
\partial_t\rho_i^{(m)}\ \ \mbox{is uniformly bounded in}\ \L^2(0,T;\W^{-1,2^\ast}(\Omega)),
\end{equation}
where $2^\ast$ is the Sobolev conjugate of $2$: $2^\ast=\frac{2d}{d-2}$ if $d>2$, and $2^\ast$ can be any real in the interval $[1,\infty)$ if $d=2$, or in $[1,\infty]$ if $d<2$.
Moreover the following compact and continuous embeddings hold for
all $\lambda$ such that $2^\ast\leq \lambda<\infty$, ,
\begin{equation}\label{eq.2.50}
\L^\lambda(\Omega)\hookrightarrow\hookrightarrow
\W^{-1,\lambda}(\Omega) \hookrightarrow \W^{-1,2^\ast}(\Omega).
\end{equation}
Then, \eqref{eq.2.46}, \eqref{eq.2.49} and \eqref{eq.2.50} allow us
to use the Aubin-Lions compactness lemma so that, for some
subsequence,
\begin{equation}\label{eq.2.51}
\rho_i^{(m)}\xrightharpoonup[m\to+\infty]{}\rho_i\quad\mbox{in}\ \
\C([0,T];\W^{-1,\lambda}(\Omega)),
\end{equation}
for $2^\ast\leq \lambda <\infty$. On the other hand, testing the
continuity equations \eqref{eq.1.28} and \eqref{eq.2.16} with
$\rho_i\chi_{Q_t}$ and $\rho^{(m)}_i\chi_{Q_t}$, respectively, where
$\chi_{Q_t}$ denotes the characteristic function of the cylinder
$Q_t:=\Omega\times[0,t]$, with $t\in(0,T)$, integrating the
resulting equations over $Q_T$, using the divergence theorem,
together with \eqref{eq.1.29}--\eqref{eq.1.30}, \eqref{eq.2.17} and
\eqref{eq.2.11}, one has
\begin{equation}\label{eq.2.52}
\|\rho_i(\cdot,t)\|_{2}^{2}=\|\rho_{i,0}\|_{2}^{2}\quad\mbox{and}\quad
\|\rho_i^{(m)}(\cdot,t)\|_{2}^{2}=\|\rho_{i,0}^{(m)}\|_{2}^{2}
\quad\forall\ t\in[0,T].
\end{equation}
Thus, applying \eqref{eq.2.47} and \eqref{eq.2.52}, together with
\eqref{eq.2.51}, we get for all $t\in[0,T]$
\begin{equation}\label{eq.2.53}
\begin{split}
&\|\rho_i^{(m)}(\cdot,t)-\rho_i(\cdot,t)\|_{2}^{2}=\|\rho_i^{(m)}(\cdot,t)\|_{2}^{2}-\|\rho_i(\cdot,t)\|_{2}^{2}
\\
&
+2\int_\Omega(\rho_i(\mathbf{x},t)-\rho_i^{(m)}(\mathbf{x},t))\rho_i(\mathbf{x},t)d\mathbf{x}
\xrightarrow[m\to+\infty]{} 0.
\end{split}
\end{equation}
Then, as a consequence of \eqref{eq.2.23} and \eqref{eq.2.53}, we
have
\begin{equation}\label{eq.2.54}
\|\rho_i^{(m)}(\cdot,t)-\rho_i(\cdot,t)\|_{\lambda}^{\lambda}\xrightarrow[m\to+\infty]{}
0   \quad\forall\ \lambda\in[1,+\infty),\quad i\in\{1,\dots,n\}.
\end{equation}
Hence, \eqref{eq.2.51} and \eqref{eq.2.54} assure that
\begin{equation}\label{eq.2.55}
\rho_i^{(m)}\xrightarrow[m\to+\infty]{} \rho_i\quad\mbox{in}\ \
\C([0,T];\L^{\lambda}(\Omega)) \quad\forall\
\lambda\in[1,+\infty),\quad i\in\{1,\dots,n\}.
\end{equation}

}

\subsection{\bf The limit passage as $m\longrightarrow+\infty$}\label{sec2.5}

Due to \eqref{eq.2.43}--\eqref{eq.2.44}, we can use the
Banach-Alaoglu theorem so that, eventually passing to a subsequence,
\begin{alignat}{2}
\label{eq.2.56}&
\mathbf{v}_i^{(m)}\xrightharpoonup[m\to+\infty]{}\mathbf{v}_i \quad \mbox{in}\ \  \L^2(0,T;\WW^{1,2}_0(\Omega)), \\
\label{eq.2.57} &
\partial_t\mathbf{v}_i^{(m)}\xrightharpoonup[m\to+\infty]{}\partial_t\mathbf{v}_i \quad \mbox{in}\ \  \L^2(0,T;\WW^{1,2}_0(\Omega))
\end{alignat}
and
\begin{alignat}{2}
\label{eq.2.58} &
\mathbf{v}_i^{(m)}\xrightharpoonup[m\to+\infty]{\star}\mathbf{v}_i
\quad \mbox{in}\ \  \L^\infty(0,T;\WW^{1,2}_0(\Omega)).
\end{alignat}

As a consequence of \eqref{eq.2.45}, \eqref{eq.2.55} and
\eqref{eq.2.57}, we have
\begin{alignat}{2} \label{eq.2.59}
& \rho_i^{(m)}\partial_t\mathbf{v}_i^{(m)}\xrightharpoonup[m\to+\infty]{}\rho_i\partial_t\mathbf{v}_i\ \mbox{ in }\L^2(0,T;\LL^{2}(\Omega)), && \\
\label{eq.2.60}
& \sqrt{\rho_i^{(m)}}\partial_t\mathbf{v}_i^{(m)}\xrightharpoonup[m\to+\infty]{}\sqrt{\rho_i}\partial_t\mathbf{v}_i\ \mbox{ in }\L^2(0,T;\LL^{2}(\Omega)), && \\
\label{eq.2.61} &
\rho_i^{(m)}\mathbf{v}_i^{(m)}\xrightarrow[m\to+\infty]{}\rho_i\mathbf{v}_i\
\mbox{ in }\L^r(0,T;\LL^r(\Omega)),\qquad 1\leq r<2^\ast. &&
\end{alignat}

Gathering the information of \eqref{eq.2.28}, \eqref{eq.2.36} and
\eqref{eq.2.55}, we can prove (see~\cite[p.~14]{Antontsev-2021} for
details)  that
\begin{equation}\label{eq.2.62}
\rho_i^{(m)}(\mathbf{v}_i^{(m)}\cdot\nabla
)\mathbf{v}_i^{(m)}\xrightarrow[m\to+\infty]{}\rho_i(\mathbf{v}_i\cdot\nabla
) \mathbf{v}_i\ \text{ in }\ \L^{1}(0,T;\LL^1(\Omega)).
\end{equation}

For each $i\in \{1,\dots,n\}$, let now $\zeta_i$ be a continuously
differentiable function on $(0,T)$ such that $\zeta_i(\cdot,t)=0$.
Multiplying \eqref{eq.2.15}${}^\prime$) by $\zeta_i$ and integrating
the resulting equation between $0$ and $T$, we obtain
\begin{equation}\label{eq.2.63}
\begin{split}
&
\int_0^T\!\!\int_{\Omega}\rho_i^{(m)}\partial_t\mathbf{v}_i^{(m)}\cdot\boldsymbol{\psi}_k\zeta_i\,d\mathbf{x}
dt +
\int_0^T\!\!\int_{\Omega}\rho_i^{(m)}(\mathbf{v}_i^{(m)}\cdot\nabla
)\mathbf{v}_i^{(m)}\cdot\boldsymbol{\psi}_k\zeta_i\,d\mathbf{x} dt
\\
& -\sum\limits_{j=1}^n \kappa_{ij}\int_0^T\!\!\int_{\Omega}\nabla
\mathbf{v}_j^{(m)}:\nabla
\boldsymbol{\psi}_k\zeta_i^\prime\,d\mathbf{x} dt +\sum\limits_{j=1}^n
\mu_{ij}\int_0^T\!\!\int_{\Omega}\nabla
\mathbf{v}_j^{(m)}:\nabla \boldsymbol{\psi}_k\zeta_i\,d\mathbf{x} dt =
\\
& \sum\limits_{j=1}^n\kappa_{ij}\zeta_i(0)\int_\Omega\nabla
\mathbf{v}_j^{(m)}(\mathbf{x},0):\nabla \boldsymbol{\psi}_k\,d\mathbf{x} +
\int_0^T\!\!\int_{\Omega}\rho_i^{(m)}\mathbf{f}_i\cdot\boldsymbol{\psi}_k\zeta_i\,d\mathbf{x}
dt
\\
&+\sum\limits_{j=1}^{n}\gamma_{ij}\int_0^T\int_{\Omega}\left(\mathbf{v}_j^{(m)}(\mathbf{x},t)-\mathbf{v}_i^{(m)}(\mathbf{x},t)\right)\cdot\boldsymbol\psi_k\zeta_i\,d\mathbf{x}
dt, \qquad k\in\{1,\dots,m\}.
\end{split}
\end{equation}
For each $i\in \{1,\dots,n\}$, we consider $\zeta_i$ as above and
$\phi_i\in \C^\infty_0(\Omega)$. We then multiply \eqref{eq.2.16} by
$\eta=\phi_i\,\zeta_i$, and integrate the resulting equation over
$Q_T$ so that
\begin{equation}\label{eq.2.64}
- \int_0^T\!\!\int_{\Omega}\rho_i^{(m)}\phi_i\zeta_i'\,d\mathbf{x} dt -
\int_0^T\!\!\int_{\Omega}\rho_i^{(m)}\mathbf{v}_i^{(m)}\cdot\nabla
\phi_i\zeta_i\,d\mathbf{x} dt =
\zeta_i(0)\int_{\Omega}\rho_i^{(m)}(\mathbf{x},0)\phi_i\,d\mathbf{x}.
\end{equation}
Using \eqref{eq.2.17}$_1$, we pass \eqref{eq.2.63} to
the limit $m\to+\infty$ with the help of the hypothesis
\eqref{eq.2.18} and the convergence results \eqref{eq.2.55},
\eqref{eq.2.56}, \eqref{eq.2.59} and \eqref{eq.2.62}. To pass \eqref{eq.2.64} to the limit $m\to+\infty$, we use \eqref{eq.2.17}$_2$, \eqref{eq.2.55} and \eqref{eq.2.61}.

After all, we obtain for each $i\in \{1,\dots,n\}$
\begin{equation}\label{eq.2.65}
\begin{split}
&
\int_0^T\!\!\int_{\Omega}\rho_i\partial_t\mathbf{v}_i\cdot\boldsymbol{\psi}_k\zeta_i\,d\mathbf{x}
dt + \int_0^T\!\!\int_{\Omega}\rho_i(\mathbf{v}_i\cdot\nabla
)\mathbf{v}_i\cdot\boldsymbol{\psi}_k\zeta_i\,d\mathbf{x} dt
\\
& +\sum\limits_{j=1}^n\int_0^T\!\!\int_{\Omega}\nabla
\mathbf{v}_j:\nabla
\boldsymbol{\psi}_k(\mu_{ij}\zeta_i-\kappa_{ij}\zeta_i')\,d\mathbf{x} dt=
\sum\limits_{j=1}^n\zeta_i(0)\kappa_{ij}\int_\Omega\nabla
\mathbf{v}_{j,0}:\nabla \boldsymbol{\psi}_k \,d\mathbf{x}
\\
& +
\int_0^T\!\!\int_{\Omega}\rho_i\mathbf{f}_i\cdot\boldsymbol{\psi}_k\zeta_i\,d\mathbf{x}
dt
+\sum\limits_{j=1}^{n}\gamma_{ij}\int_0^T\int_{\Omega}\left(\mathbf{v}_j-\mathbf{v}_i\right)\cdot\boldsymbol\psi_k\zeta_i\,d\mathbf{x}
dt,
\end{split}
\end{equation}
where $k\in\{1,\dots,m\}$, and
\begin{equation}\label{eq.2.66}
-\int_0^T\!\!\int_{\Omega}\rho_i\phi_i\zeta_i'\,d\mathbf{x} dt
-\int_0^T\!\!\int_{\Omega}\rho_i\mathbf{v}_i\cdot\nabla\phi_i\zeta_i\,d\mathbf{x}
dt = \zeta_i(0)\int_{\Omega}\rho_{i,0}\phi_i\,d\mathbf{x}.
\end{equation}
By linearity, equation \eqref{eq.2.65} holds for any finite linear
combination of $\boldsymbol{\psi}_1,\dots,\boldsymbol{\psi}_m$, and,
by a continuity argument, it is still true for any
$\boldsymbol{\phi}_i\in\mathcal{V}$. Hence, writing \eqref{eq.2.65}
with $\zeta_i\in \C^\infty_0 (0,T)$, we can see that each
$\mathbf{v}_i$, for $i\in\{1,\dots,n\}$, satisfies
\begin{equation}\label{eq.2.67}
\begin{split}
& \int_{\Omega}\rho_i\left(\partial_t\mathbf{v}_i
+\big(\mathbf{v}_i\cdot\nabla
\big)\mathbf{v}_i\right)\cdot\boldsymbol{\phi}_i\,d\mathbf{x}
+\sum\limits_{j=1}^n \kappa_{ij}\int_{\Omega}\nabla
\partial_t\mathbf{v}_j:\nabla \boldsymbol{\phi}_i\,d\mathbf{x}
\\
&+\sum\limits_{j=1}^n\mu_{ij}\int_{\Omega}\nabla
\mathbf{v}_j:\nabla \boldsymbol{\phi}_i\,d\mathbf{x} =
\int_{\Omega}\rho_i\mathbf{f}_i\cdot\boldsymbol{\phi}_i\,d\mathbf{x}+\sum\limits_{j=1}^{n}\gamma_{ij}\int_{\Omega}\left(\mathbf{v}_j-\mathbf{v}_i\right)\cdot\boldsymbol{\phi}_i\,d\mathbf{x}\end{split}
\end{equation} %
in the sense of distributions on $(0,T)$. Note that \eqref{eq.2.67}
is the non-conservative form of \eqref{eq.2.04}. By the same
reasoning, we can infer, from \eqref{eq.2.66}, that each $\rho_i$,
for $i\in\{1,\dots,n\}$, satisfies \eqref{eq.2.05} in the sense of
distributions on $(0,T)$.

Finally, by the lower semicontinuity of the norms, we can combine the estimates \eqref{eq.2.28} and \eqref{eq.2.36} with the
convergence results \eqref{eq.2.57}, \eqref{eq.2.58} and
\eqref{eq.2.60} to show \eqref{eq.2.06} and \eqref{eq.2.07}.

\subsection{\bf Continuity in time}
\label{Sec.2.6}
 Due to \eqref{eq.2.55}, $\rho_i\in
C([0,T];\L^{\lambda}(\Omega))$ for all $\lambda\geq 1$. As a
consequence, the identity $\rho_i(0)=\rho_{i,0}$
(cf.~\eqref{eq.1.29}$_1$) is meaningful for any $i\in\{1,\dots,n\}$.
On the other hand, from \eqref{eq.2.06}--\eqref{eq.2.07} and for
each $i\in\{1,\dots,n\}$, $\mathbf{v}_i\in
\L^\infty(0,T;\WW^{1,2}_0(\Omega))$ and
$\partial_t\mathbf{v}_i\in \L^2(0,T;\WW^{1,2}_0(\Omega))$.
Hence,
\begin{equation}\label{eq.2.68}
\mathbf{v}_i\in C([0,T];\WW^{1,2}_0(\Omega)),\qquad i\in\{1,\dots,n\}. 
\end{equation}
Due to \eqref{eq.2.23} and Sobolev's inequality, there holds for
each $i\in\{1,\dots,n\}$
\begin{equation}\label{eq.2.69}
\|\rho_i(\cdot,t)\mathbf{v}_i(\cdot,t)\|_{2}\leq C\|\nabla
\mathbf{v}_i(\cdot,t)\|_{2}\quad \forall\ t\in[0,T].
\end{equation}
Combining \eqref{eq.2.68} with \eqref{eq.2.69}, we have
\begin{equation*}\label{r:v:C(0,T;V):2}
\rho_i\mathbf{v}_i\in C([0,T];\LL^{2}(\Omega)),\qquad
i\in\{1,\dots,n\},
\end{equation*}
and, as a consequence, we see the identity
$\big(\rho_i\mathbf{v}_i\big)(0)=\rho_{i,0}\mathbf{v}_{i,0}$
is meaningful. The proof of Theorem~\ref{Theorem.2.1} is thus
concluded.
\end{proof}

\section{Existence of unique pressures}\label{Sect:Rec-Press}
In this section, we recover the pressure $\pi_i$, for each $i\in\{1,\dots,n\}$, from the weak
formulation \eqref{eq.2.04} of the problem
\eqref{eq.1.26}--\eqref{eq.1.30}.
In what follows, $\C_{\rm{w}}([0,T];\L^{r'}(\Omega))$,
denotes the space of weakly continuous functions from $[0,T]$ into
$\L^{r'}(\Omega)$.

The following variant of de~Rham's lemma is of fundamental
importance to recover the pressures $\pi_i$ from the weak
formulation \eqref{eq.2.04}.

\begin{lemma}\label{lemm:BP}
Let $1<q<\infty$ and $\uu^{\ast}\in \WW^{-1,q'}(\Omega)$. If
\begin{equation*}
\left\langle \uu^{\ast},\uu\right\rangle=0 \quad\forall\ \uu\in
\mathbf{V},
\end{equation*}
then there exists a unique $\pi\in \L^{q'}(\Omega)$, with
$\int_{\Omega}\pi\,d\mathbf{x}=0$, such that
\begin{equation*}
\left\langle \uu^{\ast},\uu\right\rangle = \int_{\Omega}\pi
\operatorname{div}\uu\,d\mathbf{x}\qquad\forall\,\uu\in
\WW^{1,q}_0(\Omega).
\end{equation*}
Moreover, there exists a positive constant $C$ such that
\begin{equation*}
\|\pi\|_{q'}\leq C \|\uu^{\ast}\|_{-1,q'}.
\end{equation*}
\end{lemma}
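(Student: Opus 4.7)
The plan is to derive the representation from the duality between divergence and gradient, using the existence of a bounded right inverse of the divergence operator on Lipschitz domains (Bogovskii's operator). The Lipschitz regularity of $\partial\Omega$ assumed in Conditions~\ref{CondsA} is exactly the hypothesis required for that construction, so it may be invoked here.

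Set $\L^{q}_0(\Omega):=\{f\in \L^{q}(\Omega):\int_\Omega f\,d\mathbf{x}=0\}$. By Bogovskii's theorem (see, e.g., Galdi's monograph), there exists a bounded linear operator $\mathcal{B}\colon \L^{q}_0(\Omega)\to \WW^{1,q}_0(\Omega)$ with $\operatorname{div}(\mathcal{B}f)=f$ and $\|\mathcal{B}f\|_{1,q}\le C\,\|f\|_{q}$. Define $\Lambda\colon \L^{q}_0(\Omega)\to\mathbb{R}$ by $\Lambda(f):=\langle \uu^{\ast},\mathcal{B}f\rangle$. Linearity is clear, and the estimate
\begin{equation*}
|\Lambda(f)|\le \|\uu^{\ast}\|_{-1,q'}\,\|\mathcal{B}f\|_{1,q}\le C\,\|\uu^{\ast}\|_{-1,q'}\,\|f\|_{q}
\end{equation*}
shows $\Lambda\in (\L^{q}_0(\Omega))^{\ast}$ with $\|\Lambda\|\le C\|\uu^{\ast}\|_{-1,q'}$. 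Since $(\L^{q}_0(\Omega))^{\ast}$ is isometrically isomorphic to the quotient $\L^{q'}(\Omega)/\mathbb{R}$, whose canonical representatives are the zero-mean functions, there is a unique $\pi\in \L^{q'}(\Omega)$ with $\int_\Omega\pi\,d\mathbf{x}=0$ satisfying $\Lambda(f)=\int_\Omega\pi f\,d\mathbf{x}$ for all $f\in \L^{q}_0(\Omega)$, and $\|\pi\|_{q'}\le C\|\uu^{\ast}\|_{-1,q'}$. Equivalently, one may first apply Hahn--Banach to extend $\Lambda$ to $\L^{q}(\Omega)$ with the same norm, use Riesz representation, and then subtract the mean.

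To verify the identity, fix $\uu\in\WW^{1,q}_0(\Omega)$ and set $f:=\operatorname{div}\uu$, which lies in $\L^{q}_0(\Omega)$ by the divergence theorem. Then $\uu-\mathcal{B}f$ is divergence-free and belongs to $\WW^{1,q}_0(\Omega)$; by density of $\mathcal{V}$ in the divergence-free subspace of $\WW^{1,q}_0(\Omega)$ (again leveraging Lipschitz regularity of $\partial\Omega$), the vanishing hypothesis extends to give $\langle\uu^{\ast},\uu-\mathcal{B}f\rangle=0$, so
\begin{equation*}
\langle\uu^{\ast},\uu\rangle=\langle\uu^{\ast},\mathcal{B}f\rangle=\Lambda(f)=\int_\Omega\pi\operatorname{div}\uu\,d\mathbf{x}.
\end{equation*}
Uniqueness follows because $\operatorname{div}\bigl(\WW^{1,q}_0(\Omega)\bigr)=\L^{q}_0(\Omega)$: if two zero-mean candidates $\pi_1,\pi_2$ both satisfy the identity, then $\int_\Omega(\pi_1-\pi_2)g\,d\mathbf{x}=0$ for every $g\in\L^{q}_0(\Omega)$, so $\pi_1-\pi_2$ is a constant, and the zero-mean normalization forces $\pi_1=\pi_2$.

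The substantive ingredient — and the only step where geometry enters — is Bogovskii's right inverse of the divergence together with its $\L^{q}$–$\WW^{1,q}_0$ estimate; the density of $\mathcal{V}$ in the divergence-free subspace of $\WW^{1,q}_0(\Omega)$ on Lipschitz domains is the companion fact needed to transfer the vanishing hypothesis from smooth test fields to arbitrary solenoidal elements. Once both are in hand, the rest is functional-analytic bookkeeping (Hahn--Banach, Riesz, and the quotient identification of $(\L^{q}_0)^{\ast}$).
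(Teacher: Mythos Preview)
Your argument is correct and follows exactly the route the paper indicates: the paper's proof consists solely of citations to Bogovski\u{\i}'s right inverse of the divergence (Galdi, Theorem~III.3.1) and the companion density/representation result (Pileckas; Galdi, Theorem~III.5.3), and you have spelled out precisely how these two ingredients combine to yield the lemma. In effect you have supplied the details the paper leaves to the references.
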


\begin{proof}
The proof combines the results of \cite{Bogovskii:1980} and
 \cite{Pileckas:1980}, see also Theorems~III.3.1 and III.5.3
of the monograph~\cite{Galdi:2011}.
\end{proof}

The following auxiliary result is directly connected with the
previous lemma, and is also important to prove the existence of a
unique pressure.
\begin{lemma}\label{Lemma-Beg}
Let be given $\xi\in \L^{r}(\Omega)$, with $1<r<\infty$, and such
that $\int_{\Omega}\xi\,d\mathbf{x}=0$. Then there exists a solution
$\ww\in \WW^{1,r}_0(\Omega)$ to the problem
\begin{equation*}
\mathrm{div}\ww=\xi\quad\mbox{in}\quad \Omega,
\end{equation*}
and such that
\begin{equation*}
\|\nabla\ww\|_{r}\leq C\|\xi\|_{r}
\end{equation*}
for some positive constant $C$.
\end{lemma}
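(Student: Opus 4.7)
The plan is to invoke the classical Bogovskii construction, which produces an explicit integral operator providing a right inverse of the divergence with the desired $\L^r$--bound on the gradient. I would proceed in three stages.

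First, I would handle the star-shaped case. Assume $\Omega$ is star-shaped with respect to some ball $B \subset \Omega$, and fix $\omega \in \C^\infty_0(B)$ with $\int_\Omega \omega\,d\mathbf{x} = 1$. For $\xi \in \L^r(\Omega)$ with $\int_\Omega \xi\,d\mathbf{x} = 0$, define the Bogovskii operator
\begin{equation*}
\ww(\mathbf{x}) = \int_\Omega \mathbf{N}(\mathbf{x}, \mathbf{y})\,\xi(\mathbf{y})\,d\mathbf{y},
\end{equation*}
where the kernel $\mathbf{N}(\mathbf{x},\mathbf{y})$ is the standard Bogovskii kernel obtained by integrating $\omega$ along rays emanating from $\mathbf{y}$ through $\mathbf{x}$. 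A direct calculation (integration by parts together with the zero-mean condition on $\xi$) shows that $\divv\, \ww = \xi$ in $\Omega$ and that $\ww$ vanishes on $\partial\Omega$, so $\ww \in \WW^{1,r}_0(\Omega)$ as soon as the gradient is $\L^r$--integrable.

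Second, I would prove the estimate $\|\nabla \ww\|_r \leq C \|\xi\|_r$ using Calderón--Zygmund theory. The point is that each partial derivative $\partial_j w_i$ splits as a principal-value singular integral operator with a Calderón--Zygmund kernel of convolution type plus a lower-order remainder controlled pointwise by the Hardy--Littlewood maximal function. Boundedness on $\L^r(\Omega)$ for $1<r<\infty$ then follows from the standard $\L^r$--theory of singular integrals. This is the main technical obstacle, since one must verify cancellation and size bounds for $\partial_j N_i$, but these verifications are by now classical (see, e.g., Galdi's monograph, Theorem~III.3.1).

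Third, I would pass from the star-shaped case to general bounded Lipschitz domains $\Omega$. Since $\partial\Omega$ is Lipschitz continuous, $\Omega$ admits a finite covering $\Omega = \bigcup_{k=1}^N \Omega_k$ where each $\Omega_k$ is star-shaped with respect to a ball. Using a subordinate partition of unity $\{\eta_k\}$, I would decompose $\xi$ as $\xi = \sum_k \xi_k$ with $\xi_k$ supported in $\Omega_k$ and $\int_{\Omega_k} \xi_k \,d\mathbf{x} = 0$; the zero-mean property is restored by absorbing correction terms through Bogovskii solutions on overlapping pairs, which is possible because the total integral of $\xi$ vanishes. Applying the star-shaped result to each $\xi_k$ and summing produces the desired $\ww \in \WW^{1,r}_0(\Omega)$, with the norm estimate following by finite additivity.
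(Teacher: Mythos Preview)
Your proposal is correct and follows exactly the approach the paper points to: the paper's own proof consists merely of the citation ``The proof is due to \cite{Bogovskii:1980} (see also~\cite[Theorem~III.3.1]{Galdi:2011})'', and your three-stage outline (Bogovski\u{\i} operator on star-shaped domains, Calder\'on--Zygmund bound, localization to Lipschitz domains via a partition of unity) is precisely the content of that reference. You have supplied more detail than the paper itself, but the underlying construction is identical.
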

\begin{proof}
The proof is due to \cite{Bogovskii:1980} (see
also~\cite[Theorem~III.3.1]{Galdi:2011}).
\end{proof}

Lemmas~\ref{lemm:BP} and~\ref{Lemma-Beg} are used to prove the main
result of this section.

\begin{theorem}\label{thm:p}
For each $i\in\{1,\dots,n\}$, let $(\rho_i,\mathbf{v}_i)$ be a weak
solution to the problem \eqref{eq.1.26}--\eqref{eq.1.30} in the
conditions of Theorem~\ref{Theorem.2.1}. Then, for  each
$i\in\{1,\dots,n\}$, there exists a unique function $\pi_i\in
\C_{\rm{w}}([0,T];\L^{r^{\prime}}(\Omega))$, with
$\int_{\Omega}\pi_i(\cdot,t)\,d\mathbf{x}=0$ for all $t\in[0,T]$, and $r$
satisfying \eqref{eq.3.05} below, such that
\begin{equation}
\label{eq.3.01}%
\begin{split}
\frac{d}{dt}&\int_{\Omega}\left(\rho_i(\mathbf{x},t)\mathbf{v}_i(\mathbf{x},t)\cdot\boldsymbol{\phi}_i
+
\sum\limits_{j=1}^{n}\kappa_{ij}\nabla\mathbf{v}_j(\mathbf{x},t):\nabla\boldsymbol{\phi}_i\right)d\mathbf{x}
\\
&
+\int_\Omega\left(\sum\limits_{j=1}^{n}\mu_{ij}\nabla\mathbf{v}_j(\mathbf{x},t)-\rho_i(\mathbf{x},t)\mathbf{v}_i(\mathbf{x},t)\otimes\mathbf{v}_i(\mathbf{x},t)\right):\nabla\boldsymbol{\phi}_i\,d\mathbf{x}
\\
&
-\sum\limits_{j=1}^{n}\gamma_{ij}\int_{\Omega}\left(\mathbf{v}_j(\mathbf{x},t)-\mathbf{v}_i(\mathbf{x},t)\right)\cdot\boldsymbol{\phi}_i\,d\mathbf{x}
-\int_\Omega\rho_i(\mathbf{x},t)\mathbf{f} _i(\mathbf{x},t)\cdot\boldsymbol{\phi}_i\,d\mathbf{x}
\\
&
=\int_{\Omega}\pi_i(\mathbf{x},t)\operatorname{div}\!\boldsymbol{\phi}_i\,d\mathbf{x}
\end{split}
\end{equation}
for all $\boldsymbol{\phi}_i\in \WW_{0}^{1,r}(\Omega)$, in the distribution sense on $(0,T)$. Moreover,
there exists a positive constant $C$ such that
\begin{equation}
\label{eq.3.02}%
\begin{split}
& \sup_{t\in[0,T]} \|\pi_i(\cdot,t)\|_{r'}\leq  C.
\end{split}
\end{equation}
\end{theorem}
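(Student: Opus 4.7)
The proof applies the de~Rham-type Lemma~\ref{lemm:BP} to promote the weak formulation (\ref{eq.2.04}), valid only for divergence-free test functions $\boldsymbol{\phi}_i\in\mathcal{V}$, to an identity on the full space $\WW^{1,r}_0(\Omega)$, at the cost of a pressure term $\int_\Omega\pi_i\operatorname{div}\boldsymbol{\phi}_i\,d\mathbf{x}$. Because the distributional time-derivative in (\ref{eq.2.04}) is inconvenient for a pointwise-in-$t$ application of Lemma~\ref{lemm:BP}, I would first remove it: testing (\ref{eq.2.04}) against $\chi_{[0,t]}(s)\boldsymbol{\phi}_i$ and using the initial conditions of Definition~\ref{def:ws}, I obtain, for each $i\in\{1,\dots,n\}$, every $t\in[0,T]$ and all $\boldsymbol{\phi}_i\in\mathcal{V}$,
\[
\int_\Omega\!\bigl(\rho_i(t)\mathbf{v}_i(t)-\rho_{i,0}\mathbf{v}_{i,0}\bigr)\!\cdot\!\boldsymbol{\phi}_i\,d\mathbf{x}+\sum_{j=1}^n\kappa_{ij}\!\int_\Omega\!\nabla\bigl(\mathbf{v}_j(t)-\mathbf{v}_{j,0}\bigr):\nabla\boldsymbol{\phi}_i\,d\mathbf{x}+\int_0^t\!N_i(s,\boldsymbol{\phi}_i)\,ds=0,
\]
where $N_i(s,\cdot)$ collects the viscous, convective, interactive and source contributions appearing in (\ref{eq.2.04}).

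Denoting by $\mathcal{F}_i(t)\in\WW^{-1,r'}(\Omega)$ the linear functional on $\WW^{1,r}_0(\Omega)$ defined by the left-hand side of the displayed identity, the plan is to use the bounds (\ref{eq.2.06})--(\ref{eq.2.07}), the continuity properties $\rho_i\mathbf{v}_i\in\C([0,T];\LL^2(\Omega))$ and $\mathbf{v}_j\in\C([0,T];\WW^{1,2}_0(\Omega))$ established in Section~\ref{Sec.2.6}, and the Sobolev embeddings valid for $d\in\{2,3\}$ to verify that $\mathcal{F}_i\in\C([0,T];\WW^{-1,r'}(\Omega))$ with norm bounded uniformly in $t$. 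The convective contribution $\int_0^t\!\!\int_\Omega\rho_i\mathbf{v}_i\otimes\mathbf{v}_i:\nabla\boldsymbol{\phi}_i\,d\mathbf{x}\,ds$ is the most delicate to control and is the term selecting the admissible range (\ref{eq.3.05}) of $r$. Since $\mathcal{F}_i(t)$ vanishes on $\mathcal{V}$ by construction, Lemma~\ref{lemm:BP} furnishes, for each $t$, a unique $P_i(\cdot,t)\in\L^{r'}(\Omega)$ with $\int_\Omega P_i(\cdot,t)\,d\mathbf{x}=0$ satisfying $\mathcal{F}_i(t)(\boldsymbol{\phi}_i)=\int_\Omega P_i(\cdot,t)\operatorname{div}\boldsymbol{\phi}_i\,d\mathbf{x}$, together with the bound $\|P_i(\cdot,t)\|_{r'}\le C\|\mathcal{F}_i(t)\|_{-1,r'}$; taking the supremum in $t$ yields (\ref{eq.3.02}).

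To recover (\ref{eq.3.01}), I differentiate the above identity in the distribution sense on $(0,T)$; this reproduces the left-hand side of (\ref{eq.3.01}) and identifies the pressure appearing there with the distributional time-derivative of $P_i$. The continuity of $\mathcal{F}_i$ with values in $\WW^{-1,r'}(\Omega)$ is transferred through the continuous linear de~Rham operator of Lemma~\ref{lemm:BP} to give $P_i\in\C([0,T];\L^{r'}(\Omega))$, from which the weak-in-time continuity $\pi_i\in\C_{\rm w}([0,T];\L^{r'}(\Omega))$ is read off using the extra smoothness supplied by the Kelvin--Voigt composite term $\rho_i\mathbf{v}_i+\sum_j\kappa_{ij}\Delta\mathbf{v}_j$. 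Uniqueness of $\pi_i$ then follows from Lemma~\ref{Lemma-Beg}: if $\pi_i^{(1)}$ and $\pi_i^{(2)}$ are two admissible pressures, then subtracting the two instances of (\ref{eq.3.01}) gives $\int_\Omega(\pi_i^{(1)}-\pi_i^{(2)})\operatorname{div}\mathbf{w}\,d\mathbf{x}=0$ distributionally in $t$, for every $\mathbf{w}\in\WW^{1,r}_0(\Omega)$; solving $\operatorname{div}\mathbf{w}=\xi$ via Lemma~\ref{Lemma-Beg} for arbitrary mean-zero $\xi\in\L^r(\Omega)$ forces $\pi_i^{(1)}-\pi_i^{(2)}$ to be constant in $\mathbf{x}$, and the zero-mean normalization then makes this constant vanish.

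The main difficulty is the careful bookkeeping of Sobolev exponents that determines the admissible range of $r$ in (\ref{eq.3.05}). This range must simultaneously accommodate the H\"older pairing for the convective term $\rho_i\mathbf{v}_i\otimes\mathbf{v}_i$ (estimated through $\mathbf{v}_i\in\L^\infty(0,T;\LL^4(\Omega))$ via $\WW^{1,2}\hookrightarrow\LL^4$ for $d\leq 4$), the duality ensuring that $\rho_i\mathbf{v}_i\in\C([0,T];\LL^2)$ induces a bounded functional on $\LL^{r}$, and the gradient pairings controlling the viscous and elastic contributions; in $d=3$ this forces a nontrivial lower bound on $r$, while in $d=2$ any $r>1$ works. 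A secondary subtlety is the passage from $P_i\in\C([0,T];\L^{r'})$ to $\pi_i\in\C_{\rm w}([0,T];\L^{r'})$ through the distributional time derivative, which leverages the structural regularity imparted by the Kelvin--Voigt elastic term $\kappa_{ij}\partial_t\Delta\mathbf{v}_j$.
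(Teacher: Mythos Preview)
Your strategy---time-integrate (\ref{eq.2.04}), apply Lemma~\ref{lemm:BP} pointwise in $t$ to the resulting functional, estimate each contribution via Sobolev embeddings and the a~priori bounds (\ref{eq.2.06})--(\ref{eq.2.07}), and then differentiate back---is exactly the paper's; the only cosmetic difference is that the paper reaches the integrated identity by first testing against $\xi\in\C^\infty_0(0,T)$ to obtain $a_i'=b_i$ and then integrating from $0$ to $t$, whereas you test against $\chi_{[0,t]}$ directly. One point you should clean up: the paper's $\pi_i$ \emph{is} your $P_i$, the de~Rham function attached to the time-integrated identity (the paper's (\ref{eq.3.11})). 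The bound (\ref{eq.3.02}) and the regularity $\pi_i\in\C_{\rm w}([0,T];\L^{r'})$ are statements about this time-primitive and follow just as you sketch for $P_i$; the paper obtains the weak continuity by pairing against an arbitrary $\xi\in\L^r(\Omega)$ through Lemma~\ref{Lemma-Beg} and invoking the time-continuity of the right-hand side of (\ref{eq.3.12}), while your route via linearity and the norm bound in Lemma~\ref{lemm:BP} is an equally valid (in fact slightly stronger) alternative. Your additional step of identifying the pressure in (\ref{eq.3.01}) with $\partial_t P_i$ and then trying to extract $\partial_t P_i\in\C_{\rm w}([0,T];\L^{r'})$ ``from the Kelvin--Voigt term'' is neither what the paper does nor justified as written; differentiating (\ref{eq.3.11}) in the distribution sense produces $\tfrac{d}{dt}\int_\Omega P_i\,\operatorname{div}\boldsymbol{\phi}_i$ on the right, and the paper simply writes $\pi_i$ for this $P_i$. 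Drop that paragraph and your argument coincides with the paper's.
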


\begin{proof}
For each $i\in\{1,\dots,n\}$, we multiply \eqref{eq.2.04} by
$\xi_i\in \C^\infty_0(0,T)$ and integrate the resulting equation over
$(0,T)$. Then we get
\begin{equation*}
-\int_0^Ta_i\,\xi_i'\,dt=\int_0^Tb_i\,\xi_i\,dt,
\end{equation*}
where $\xi_i'$ denotes the time derivative of $\xi_i$,
\begin{alignat}{2}
\label{eq.3.03} &
a_i(t):=\int_{\Omega}\left(\rho_i(\mathbf{x},t)\mathbf{v}_i(\mathbf{x},t)\cdot\boldsymbol{\psi}_i + \sum\limits_{j=1}^{n}\kappa_{ij}\nabla\mathbf{v}_j(\mathbf{x},t):\nabla\boldsymbol{\psi}_i\right)d\mathbf{x}, &&  \\
\label{eq.3.04} &
b_i(t):=-\int_\Omega\mathbf{Q}_i(\mathbf{x},t):\nabla\boldsymbol{\psi}_i\,d\mathbf{x},
\end{alignat}
and
\begin{equation*}
\mathbf{Q}_i(\mathbf{x},t):=
\sum\limits_{j=1}^{n}\mu_{ij}\nabla\mathbf{v}_j(\mathbf{x},t)-\rho_i(\mathbf{x},t)\mathbf{v}_i(\mathbf{x},t)\otimes\mathbf{v}_i(\mathbf{x},t)
+\sum\limits_{j=1}^{n}\gamma_{ij}\mathbf{G}_{ij}(\mathbf{x},t)+\mathbf{F}_i(\mathbf{x},t),\quad
t\in [0,T],
\end{equation*}
with
\begin{alignat*}{2}
&
\mathbf{div}\,\mathbf{G}_{ij}(\mathbf{x},t)=\left(\mathbf{v}_j(\mathbf{x},t)-\mathbf{v}_i(\mathbf{x},t)\right),\quad t\in [0,T],
\\
&
\mathbf{div}\mathbf{F}_i(\mathbf{x},t)=\rho_i(\mathbf{x},t)\mathbf{f}_i(\mathbf{x},t),\quad t\in [0,T],
\end{alignat*}

We claim that
\begin{equation}\label{eq.3.05}
\mathbf{Q}_i\in \L^{r'}(0,T;\LL^{r'}(\Omega)),\quad
r\geq\max\left\{2,\ \frac{d}{2} \right\}.
\end{equation}
Note that $\LL^{r'}(\Omega)$ denotes here the Lebesgue space of tensor-valued functions.

Using \eqref{eq.1.24}--\eqref{eq.1.25},
\eqref{eq.2.03}, \eqref{eq.2.06}, \eqref{eq.2.48} and the
H\"{o}lder, Minkowski and Sobolev inequalities, we have for each
$i\in\{1,\dots,n\}$
\begin{alignat}{4}
& \label{eq.3.06}
\int_0^T\left\|\sum\limits_{j=1}^{n}\mu_{ij}\nabla\mathbf{v}_j(\cdot,t)\right\|_{r'}^{r'}dt\leq
\sum\limits_{j=1}^{n}\mu_{ij}\int_0^T\|\nabla\mathbf{v}_j(\cdot,t)\|_{r'}^{r'}dt \leq C_1,\quad r\geq 2, && \\
&
\int_0^T\|\rho_i(\cdot,t)\mathbf{v}_i(\cdot,t)\otimes\mathbf{v}_i(\cdot,t)\|
_{r'}^{r'}\,dt\leq C_2'\int_0^T\|\mathbf{v}_i(\cdot,t)\|^{2r'}_{2r'}\,dt\leq C_2,\quad r\geq \frac{d}{2}, && \\
&
\int_0^T\left\|\sum\limits_{j=1}^{n}\gamma_{ij}\left(\mathbf{v}_j(\cdot,t)-\mathbf{v}_i(\cdot,t)\right)dt\right\|_{r'}^{r'}\,dt\leq
C_3'\sum\limits_{j=1}^{n}\gamma_{ij}\int_0^T\|\mathbf{v}_j(\cdot,t)\|_{r'}^{r'}\,dt
\leq C_3 \quad r\geq \frac{2d}{2+2}, &&
\\
& \label{eq.3.09}
\int_0^T\|\rho_i(\cdot,t)\mathbf{f}_i(\cdot,t)\|_{r'}^{r'}\,dt\leq
C_4'\int_0^T\|\mathbf{f}_i(\cdot,t)\|_{r'}^{r'}\,dt\leq C_4,\quad r\geq2,
&&
\end{alignat}
for some positive constants $C_1,\ldots,C_4$ . Therefore,
\eqref{eq.3.05} follows from \eqref{eq.3.06}--\eqref{eq.3.09}. In
turn, \eqref{eq.3.04} and \eqref{eq.3.05} imply that $b_i\in
\L^{r'}(0,T)$, and thus $a_i\in \W^{1,r'}(0,T)$, with $a_i'=b_i$, for
each $i\in\{1,\dots,n\}$. By Sobolev's embedding theorem, $a_i$ can
be represented by a continuous function which we still denote by
$a_i$. For each $i\in\{1,\dots,n\}$, we integrate the identity
$a'_i=b_i$ as follows:
\begin{equation}\label{eq.3.10}
a_i(t)=a_i(0)+\int_0^t b_i(s)\,ds\quad \forall\ t\in (0,T),\quad
i\in\{1,\dots,n\}.
\end{equation}
For each $i\in\{1,\dots,n\}$, we define the function
\begin{equation*}
\mathbf{R}_i(\mathbf{x},t):=\int_0^t\mathbf{Q}_i(\mathbf{x},s)\,ds,\quad
t\in[0,T].
\end{equation*}
For $t\in[0,T]$ arbitrarily chosen, we can use this function,
together with \eqref{eq.1.29} and \eqref{eq.3.03}--\eqref{eq.3.04},
to rewrite \eqref{eq.3.10} in the following way
\begin{equation*}
\begin{split}
\int_{\Omega}& \bigg[\rho_i(\mathbf{x},t)\mathbf{v}_i(\mathbf{x},t)\cdot\boldsymbol{\phi}_i
+\sum\limits_{j=1}^{n}\kappa_{ij}\nabla\mathbf{v}_j(\mathbf{x},t):
\nabla\boldsymbol{\phi}_i-\rho_{i,0}\mathbf{v}_{i,0}\cdot\boldsymbol{\phi}_i-\sum\limits_{j=1}^{n}\kappa_{ij}\nabla\mathbf{v}_{j,0}:\nabla\boldsymbol{\phi}_i \\
&\ \ + \mathbf{R}_i(\mathbf{x},t):\nabla\boldsymbol{\phi}_i\bigg]\,d\mathbf{x}=0.
\end{split}
\end{equation*}
For each $i\in\{1,\dots,n\}$, we use Lemma~\ref{lemm:BP}, with
$q=r'$, where $r$ is given by \eqref{eq.3.05}, and
\begin{equation*}
\uu^\ast_i(\mathbf{x},t):=\rho_i(\mathbf{x},t)\mathbf{v}_i(\mathbf{x},t)-\rho_{i,0}\mathbf{v}_{i,0}-\sum\limits_{j=1}^{n}\kappa_{ij}\Delta\left(\mathbf{v}_j(\mathbf{x},t)-\mathbf{v}_{j,0}\right)-
\mathbf{div}(\mathbf{R}_i(\mathbf{x},t)),
\end{equation*}
to ensure the existence of a unique function $\pi_i(\mathbf{x},t)\in
\L^{r'}(\Omega)$, with $\int_\Omega\pi_i(\mathbf{x},t)\,d\mathbf{x}=0$, such that
\begin{equation}\label{eq.3.11}
\begin{split}
\int_{\Omega}& \bigg[\rho_i(\mathbf{x},t)\mathbf{v}_i(\mathbf{x},t)\cdot\boldsymbol{\phi}_i
+\sum\limits_{j=1}^{n}\kappa_{ij}\nabla\mathbf{v}_j(\mathbf{x},t):\nabla\boldsymbol{\phi}_i-
\left(\rho_{i,0}\mathbf{v}_{i,0}\cdot\boldsymbol{\phi}_i
+\sum\limits_{j=1}^{n}\kappa_{ij}\nabla\mathbf{v}_{j,0}:\nabla\boldsymbol{\phi}_i\right)
\\
&\,\,+\mathbf{R}_i(\mathbf{x},t):\nabla\boldsymbol{\phi}_i\bigg]\,d\mathbf{x}=
\int_{\Omega}\pi_i(\mathbf{x},t)\mathrm{div}\boldsymbol{\phi}_i\,d\mathbf{x}\qquad
\forall\,\boldsymbol{\phi}_i\in W_0^{1,r'}(\Omega)^d.
\end{split}
\end{equation}
In addition, for each $i\in\{1,\dots,n\}$, there exists a positive
constant $C_i$ such that
\begin{equation*}
\begin{split}
& \|\pi_i(\cdot,t)\|_{r'}\leq \\
& C_i\left(\|\rho_i(\cdot,t)\mathbf{v}_i(\cdot,t)-\rho_{i,0}(\cdot)\mathbf{v}_{i,0}(\cdot)\|_{r'} +
\sum\limits_{j=1}^{n}\kappa_{ij}\|\nabla(\mathbf{v}_j(\cdot,t)-\mathbf{v}_{j,0}(\cdot))\|_{r'}
+ \|\mathbf{R}_i(\cdot,t)\|_{r'}\right).
\end{split}
\end{equation*}

We claim now that, for each $i\in\{1,\dots,n\}$,
$$t\longmapsto \pi_i(\cdot,t)$$
is a Bochner-measurable function. From Lemma~\ref{Lemma-Beg}, we
know that,  given $\xi_i\in \L^{r}(\Omega)$, there exists
$\boldsymbol{\psi}_i\in W_0^{1,r}(\Omega)^d$ such that
\[
\mathrm{div}\boldsymbol{\phi}_i=\xi_i-\xi_{i,\Omega},\qquad
\xi_{i,\Omega}:=\frac{1}{|\Omega|}\int_{\Omega}\xi_i\,d\mathbf{x},
\]
where $|\Omega|$ denotes the $d$-Lebesgue measure of $\Omega$.
Combining this result with \eqref{eq.3.11}, and still using the fact
that $\int_\Omega \pi_i(\cdot,t)\,d\mathbf{x}=0$, we see that, for each
$i\in\{1,\dots,n\}$, and for an arbitrarily chosen $\xi_i\in
\L^{r}(\Omega)$,
\begin{equation}\label{eq.3.12}
\begin{split}
\int_{\Omega} \pi_i(\mathbf{x},t)\,\xi_i\,d\mathbf{x} & =\int_{\Omega}
\pi_i(\mathbf{x},t)\,(\xi_i-\xi_{i,\Omega})\,d\mathbf{x}=
\int_{\Omega}\Big(\Big(\rho_i(\mathbf{x},t)\mathbf{v}_i(\mathbf{x},t)-\rho_{i,0}\mathbf{v}_{i,0}\Big)\cdot\boldsymbol{\phi}_i
\\
&
+\sum\limits_{j=1}^{n}\kappa_{ij}\Big(\nabla\mathbf{v}_j(\mathbf{x},t)-\nabla\mathbf{v}_{j,0}\Big):\nabla\boldsymbol{\phi}_i
+ \mathbf{R}_i(\mathbf{x},t):\nabla\boldsymbol{\phi}_i\Big)\,d\mathbf{x}.
\end{split}
\end{equation}
From \eqref{r:v:C(0,T;V):2} and \eqref{eq.2.68}, we can infer that
the right-hand side function of \eqref{eq.3.12} is continuous in
time. Whence the function defined by the left-hand side of
\eqref{eq.3.12} is also continuous in time, which shows that
$\pi_i\in \C_{\rm{w}}([0,T];\L^{r'}(\Omega))$. Consequently, $\pi_i$ is
Bochner-measurable  for any $i\in\{1,\dots,n\}$.

Then, by successively replacing, in \eqref{eq.3.11},
$\mathbf{R}_i(\cdot,t)$, $\mathbf{Q}_i(\cdot,s)$ and
$\mathbf{F}_i(\cdot,s)$ by the corresponding expressions, we easily
get \eqref{eq.3.01}.

Next, for each $i\in\{1,\dots,n\}$, we use the Minkowski and
H\"{o}lder inequalities, together with Fubini's theorem, so that
from \eqref{eq.3.12} we obtain the following inequality
\begin{equation}\label{eq.3.13}
\begin{split}
\|\pi_i(\cdot,t)\|_{r'}\leq C_i & \left(
\|\rho_i(\cdot,t)\mathbf{v}_i(\cdot,t)-\rho_{i,0}\mathbf{v}_{i,0}\|_{r'} + \sum\limits_{j=1}^{n}\kappa_{ij}\|\nabla\mathbf{v}_j(\cdot,t)-\nabla\mathbf{v}_{j,0}\|_{r'} \right. \\
& \ \ \left.+ \left(\int_0^t
\|\mathbf{Q}_i(\cdot,s)\|_{r'}^{r'}\,ds\right)^{\frac{1}{r'}}\right),
\end{split}
\end{equation}
for some positive constant $C_i$. Taking the supreme of
\eqref{eq.3.13} in $[0,T]$, and using the estimates
\eqref{eq.3.06}--\eqref{eq.3.09}, we finally achieve
\eqref{eq.3.02}.
\end{proof}

\section{Regularity}\label{Sect:Regularity}

In this section, we prove that the solutions found in
Section~\ref{Sect:EWS} have higher regularity. This result will be
important to prove the uniqueness result. Before we do so, let us
recall the following classical result on the regularity of the
density $\rho$ for the nonhomogeneous Navier-Stokes equations. Since
the continuity equation is the same for either the nonhomogeneous
Navier-Stokes or Kelvin-Voigt equations, the following result is
useful here as well.

\begin{lemma}\label{lemma:LS}
For each $i\in\{1,\dots,n\}$, let $(\mathbf{v}_i,\rho_i)$ be a solution to
\eqref{eq.1.26}--\eqref{eq.1.30}  in the conditions of
Theorem~\ref{Theorem.2.1}. If, in addition, $\partial\Omega$ is of
class $\C^2$, and, for each $i\in\{1,\dots,n\}$, $\mathbf{v}_i\in
\C([0,T];\WW^{1,\infty}(\Omega))$ and
\begin{equation}\label{eq.4.01}
\rho_{i,0}\in \W^{1,\infty}(\Omega),
\end{equation}
then
\begin{alignat}{2}
& \label{eq.4.02} \|\nabla \rho_i(\cdot,t)\|_{\infty}\leq
\sqrt{d}\|\nabla \rho_{i,0}\|_{\infty}
\exp\left(\int_{0}^{t}\|\nabla
\mathbf{v}_i(\cdot,s)\|_{\infty}\,ds\right), &&
\\
& \label{eq.4.03} \|\partial_t\rho_i(\cdot,t)\|_{\infty}\leq
\sqrt{d}\|\nabla \rho_{i,0}\|_{\infty}
\|\mathbf{v}_i(\cdot,t)\|_{\infty} \exp\left(\int_{0}^{t}\|\nabla
\mathbf{v}_i(\cdot,s)\|_{\infty}\,ds\right) &&
\end{alignat}
for all $t\in[0,T]$.
\end{lemma}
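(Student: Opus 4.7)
The plan is to exploit the fact that combining the incompressibility condition \eqref{eq.1.26} with the continuity equation \eqref{eq.1.28} reduces the evolution of $\rho_i$ to a pure transport equation,
$$\partial_t\rho_i + \mathbf{v}_i\cdot\nabla\rho_i = 0,$$
and that, since $\mathbf{v}_i\in\C([0,T];\WW^{1,\infty}(\Omega))$ vanishes on $\partial\Omega$, standard transport theory guarantees that the $\W^{1,\infty}$ regularity of $\rho_{i,0}$ propagates in time. Both \eqref{eq.4.02} and \eqref{eq.4.03} will then follow from routine energy estimates and Gronwall.

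First, differentiating the transport equation with respect to $x_k$ (justified either by the classical regularity coming from Lemma~\ref{lem:LS}, applied under the stronger assumption~\eqref{eq.4.01}, or by approximating $\rho_{i,0}$ with a $\C^1$-sequence and passing to the limit in the estimates below) yields
$$\partial_t(\partial_k\rho_i) + \mathbf{v}_i\cdot\nabla(\partial_k\rho_i) = -\partial_k\mathbf{v}_i\cdot\nabla\rho_i,\qquad k=1,\dots,d.$$
Setting $\mathbf{w}:=\nabla\rho_i$ and taking the scalar product with $|\mathbf{w}|^{p-2}\mathbf{w}$ for a fixed $p\ge 2$, one obtains the pointwise identity
$$\frac{1}{p}\partial_t|\mathbf{w}|^p + \frac{1}{p}\,\mathbf{v}_i\cdot\nabla|\mathbf{w}|^p = -|\mathbf{w}|^{p-2}(\nabla\mathbf{v}_i\,\mathbf{w})\cdot\mathbf{w}.$$
Integrating over $\Omega$, the transport term vanishes thanks to $\mathrm{div}\,\mathbf{v}_i=0$ and $\mathbf{v}_i|_{\partial\Omega}=\boldsymbol{0}$, leaving
$$\frac{1}{p}\frac{d}{dt}\|\nabla\rho_i(\cdot,t)\|_{p}^{p} \;\le\; \|\nabla\mathbf{v}_i(\cdot,t)\|_\infty\,\|\nabla\rho_i(\cdot,t)\|_{p}^{p}.$$
Gronwall's lemma gives
$$\|\nabla\rho_i(\cdot,t)\|_p \le \|\nabla\rho_{i,0}\|_p\exp\Bigl(\int_0^t\|\nabla\mathbf{v}_i(\cdot,s)\|_\infty\,ds\Bigr),$$
and letting $p\to+\infty$ on the bounded domain $\Omega$ produces \eqref{eq.4.02}; the dimensional constant $\sqrt{d}$ appears when comparing the Euclidean norm $|\nabla\rho_i|$ with the pointwise maximum of its components, depending on the convention used for $\|\nabla\mathbf{v}_i\|_\infty$.

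Bound \eqref{eq.4.03} is then immediate: directly from the transport equation,
$$|\partial_t\rho_i(\mathbf{x},t)| = |\mathbf{v}_i(\mathbf{x},t)\cdot\nabla\rho_i(\mathbf{x},t)|\le \|\mathbf{v}_i(\cdot,t)\|_\infty\,|\nabla\rho_i(\mathbf{x},t)|,$$
and taking the supremum in $\mathbf{x}$ and inserting \eqref{eq.4.02} yields the claim. The main technical obstacle is the rigorous justification of the pointwise differentiation of the density equation: Lemma~\ref{lem:LS} provides only $\C^1$ regularity for a $\C^1$ datum, whereas the present hypothesis is the weaker $\rho_{i,0}\in\W^{1,\infty}(\Omega)$. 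One therefore either mollifies $\rho_{i,0}$, derives the estimates at the approximating level, and passes to the limit (the bounds being invariant under mollification), or works directly along the Lipschitz-continuous flow map of $\mathbf{v}_i$ (well defined because $\mathbf{v}_i\in\C([0,T];\WW^{1,\infty}(\Omega))$ with $\mathbf{v}_i|_{\partial\Omega}=\boldsymbol{0}$). All remaining manipulations are standard.
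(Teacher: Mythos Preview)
Your argument is correct and in fact supplies far more detail than the paper does: the paper's own ``proof'' consists of a single sentence referring the reader to \cite[Lemma~1.3]{LS:1978}, with no argument given. Your route via $\L^p$ energy estimates on $\nabla\rho_i$, Gronwall, and the limit $p\to\infty$ is the standard way to establish such propagation-of-Lipschitz-regularity bounds for transport equations with divergence-free Lipschitz velocity fields, and is essentially what one finds in the Ladyzhenskaya--Solonnikov reference. Your treatment of the $\W^{1,\infty}$-versus-$\C^1$ regularity issue for $\rho_{i,0}$ (via mollification or via the flow map) is also appropriate, and your derivation of \eqref{eq.4.03} from \eqref{eq.4.02} via the transport equation itself is exactly right.
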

{
\begin{proof}
For the proof, we refer the reader to \cite[Lemma~1.3]{LS:1978}.
\end{proof}

Next, according to Lemma~\ref{lemma:LS},  establishing higher  regularity of the density, in particularly,
that $\|\nabla\rho_i(\cdot,s)\|_{\infty}$ is bounded, we first need to prove the boundedness of
$\|\nabla\mathbf{v}_i(\cdot,s)\|_{\infty}$ for all $s\in[0,T]$. Before doing so, we first prove the following auxiliary result.
\begin{lemma}\label{lem.4.10}
For each $i\in\{1,\dots,n\}$, let $(\mathbf{v}_i,\rho_i)$ be a solution to the problem
\eqref{eq.1.26}--\eqref{eq.1.30},  in the conditions of
Theorem~\ref{Theorem.2.1} and Lemma ~\ref{lem:LS}.
Assume that there exists a real number $r$ such that $d<r<2^*$ and for each $i\in\{1,\dots,n\}$ there hold
\begin{alignat}{2}
& \label{eq.4.04} \mathbf{v}_{i,0}\in \WW^{2,r}(\Omega) \cap \HH_{0}^{2}(\Omega),
 && \\
& \label{eq.4.05} \mathbf{f}_i\in \L^{2}\left(0,T;\LL^r(\Omega)\right),\quad
d<r<2^*. &&
\end{alignat}
 Then   the estimate
\begin{equation*}
\sup_{t\in[0,T]}
\sum_{i=1}^{n}\left\Vert\Delta\mathbf{v}_i(\cdot,t)
\right\Vert_{r}^{r}\leq
K_3<\infty,\quad d<r<2^{*}
\end{equation*}
 is valid, for some positive constant $K_3$.
\end{lemma}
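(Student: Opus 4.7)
The plan is to view the Kelvin--Voigt system \eqref{eq.1.26}--\eqref{eq.1.28} at each time $t$ as a classical steady Stokes problem in an auxiliary unknown, apply the $\L^r$-regularity theory for that problem, and then recover the bound on $\Delta\mathbf{v}_i$ from a first-order linear matrix ODE that expresses $\mathbf{v}_i$ in terms of the auxiliary unknown. The positive-definiteness of $\mathbf{K}$ is what makes both steps work, while the $\C^2$-regularity of $\partial\Omega$ (inherited here as in Lemma~\ref{lemma:LS}) is what licenses the Stokes estimate in $\L^r$.

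First, I would introduce
\[
\mathbf{u}_i(\cdot,t):=\sum_{j=1}^n\bigl(\mu_{ij}\,\mathbf{v}_j(\cdot,t)+\kappa_{ij}\,\partial_t\mathbf{v}_j(\cdot,t)\bigr),\qquad i\in\{1,\dots,n\},
\]
so that, after using \eqref{eq.1.28} to rewrite $\partial_t(\rho_i\mathbf{v}_i)+\mathbf{div}(\rho_i\mathbf{v}_i\otimes\mathbf{v}_i)$ as $\rho_i\partial_t\mathbf{v}_i+\rho_i(\mathbf{v}_i\cdot\nabla)\mathbf{v}_i$, the momentum balance \eqref{eq.1.27} together with \eqref{eq.1.26} and \eqref{eq.1.30} becomes
\[
-\Delta\mathbf{u}_i+\nabla\pi_i=\mathbf{g}_i,\qquad \operatorname{div}\mathbf{u}_i=0,\qquad \mathbf{u}_i\bigl|_{\partial\Omega}=\mathbf{0},
\]
with
\[
\mathbf{g}_i:=\rho_i\mathbf{f}_i-\rho_i\partial_t\mathbf{v}_i-\rho_i(\mathbf{v}_i\cdot\nabla)\mathbf{v}_i+\sum_{j=1}^n\gamma_{ij}(\mathbf{v}_j-\mathbf{v}_i).
\]
Classical Cattabriga--Solonnikov $\L^r$-regularity for the steady Stokes system (see \cite{Galdi:2011}) then yields, for a.e.\ $t\in(0,T)$,
\[
\|\mathbf{u}_i(\cdot,t)\|_{2,r}\leq C\,\|\mathbf{g}_i(\cdot,t)\|_r.
\]

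Next, I would bound $\|\mathbf{g}_i(\cdot,t)\|_r$ using the a priori information already at hand. The forcing is controlled by assumption \eqref{eq.4.05}, whereas $\rho_i\partial_t\mathbf{v}_i$ and $\sum_j\gamma_{ij}(\mathbf{v}_j-\mathbf{v}_i)$ are handled by combining \eqref{eq.2.06}--\eqref{eq.2.07} with the Sobolev embedding $\WW^{1,2}_0(\Omega)\hookrightarrow\LL^{2^{*}}(\Omega)$ and the range $r<2^{*}$. The delicate point, which I expect to be the main obstacle, is the convective term $\rho_i(\mathbf{v}_i\cdot\nabla)\mathbf{v}_i$: since only $\mathbf{v}_i\in\L^\infty(0,T;\VV)$ is known a priori, a direct H\"older bound with $\mathbf{v}_i\in\LL^{2^{*}}$ and $\nabla\mathbf{v}_i\in\LL^{2}$ yields at best $\LL^{d/(d-1)}$ in space, which is strictly weaker than $\LL^r$ when $r>d$. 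I would close this gap by a short bootstrap: apply the Stokes inequality first with a subcritical exponent $r_0\in(1,r)$, combine it with the ODE inversion described below to upgrade $\nabla\mathbf{v}_i$ into some $\LL^q$ with $q>d$, invoke Morrey's embedding $\WW^{1,q}\hookrightarrow\LL^{\infty}$ to secure $\mathbf{v}_i\in\L^\infty$, and iterate finitely many times until the target exponent $r\in(d,2^{*})$ is attained.

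Finally, I would recover $\Delta\mathbf{v}_i$ from $\Delta\mathbf{u}_i$ by observing that, with $\mathbf{V}:=(\mathbf{v}_1,\dots,\mathbf{v}_n)$ and $\mathbf{U}:=(\mathbf{u}_1,\dots,\mathbf{u}_n)$, the very definition of $\mathbf{u}_i$ is the linear first-order matrix ODE $\mathbf{K}\,\partial_t\mathbf{V}+\mathbf{M}\,\mathbf{V}=\mathbf{U}$, with $\mathbf{K}$ invertible thanks to \eqref{eq.1.25}. Applying the Laplacian (which commutes with the constant matrices $\mathbf{K}$ and $\mathbf{M}$) and solving by Duhamel, together with the initial datum $\Delta\mathbf{v}_{i,0}\in\LL^r(\Omega)$ provided by \eqref{eq.4.04}, gives the pointwise-in-time estimate
\[
\sup_{t\in[0,T]}\|\Delta\mathbf{V}(\cdot,t)\|_r\leq C(T,\mathbf{M},\mathbf{K})\left(\|\Delta\mathbf{V}(\cdot,0)\|_r+\int_0^T\|\Delta\mathbf{U}(\cdot,s)\|_r\,ds\right),
\]
from which, after summation over $i$ and use of the Stokes bound from the first step, the desired estimate $\sup_{t\in[0,T]}\sum_{i=1}^n\|\Delta\mathbf{v}_i(\cdot,t)\|_r^r\leq K_3$ follows.
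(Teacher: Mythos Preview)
Your overall skeleton matches the paper's proof exactly: introduce the auxiliary unknown $\mathbf{u}_i=\sum_j(\mu_{ij}\mathbf{v}_j+\kappa_{ij}\partial_t\mathbf{v}_j)$, recognize at each fixed $t$ a stationary Stokes system for $\mathbf{u}_i$ with right-hand side $\mathbf{g}_i$, apply the Cattabriga--Solonnikov $\L^r$-estimate, and then recover $\Delta\mathbf{v}_i$ from $\Delta\mathbf{u}_i$ via the Duhamel formula for the constant-coefficient matrix ODE $\mathbf{K}\,\partial_t\mathbf{V}+\mathbf{M}\,\mathbf{V}=\mathbf{U}$, using the invertibility of $\mathbf{K}$ from \eqref{eq.1.25} and the initial regularity \eqref{eq.4.04}.

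The one substantive difference is how you handle the convective term $\rho_i(\mathbf{v}_i\cdot\nabla)\mathbf{v}_i$. You propose a bootstrap in the Lebesgue exponent, running the Stokes-plus-ODE machine first at a subcritical $r_0$ and iterating up to $r$. The paper avoids this entirely: it estimates, with $r<q\leq 2^{\ast}$,
\[
\|\rho_i(\mathbf{v}_i\cdot\nabla)\mathbf{v}_i\|_{r}\;\leq\;C\,\|\mathbf{v}_i\|_{q}\,\|\nabla\mathbf{v}_i\|_{\frac{rq}{q-r}}\;\leq\;C\,\|\nabla\mathbf{v}_i\|_{2}\,\|\Delta\mathbf{v}_i\|_{r},
\]
the last step by the embedding $\|\nabla\uu\|_{r^{\ast}}\leq C\|\Delta\uu\|_{r}$ for $\uu\in\WW^{2,r}\cap\WW^{1,r}_0$. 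Since $\|\nabla\mathbf{v}_i\|_{2}$ is already controlled by \eqref{eq.2.06}, this puts the unknown $\|\Delta\mathbf{v}_i\|_{r}$ \emph{linearly} back into the Stokes bound; coupling this with the Duhamel representation yields a single Gronwall inequality for $\sum_i\|\Delta\mathbf{u}_i(\cdot,t)\|_{r}^{r}$, from which the bound on $\sum_i\|\Delta\mathbf{v}_i(\cdot,t)\|_{r}^{r}$ follows directly. Your bootstrap is correct and perhaps more robust, but the paper's closure is shorter and reaches the target exponent $r$ in one pass without any intermediate exponents.
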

\begin{proof}
 Let us introduce the functions defined by
\begin{equation}\label{eq.4.09}
\sum_{j=1}^{n}\mu_{ij}\uu_{j}:=\sum_{j=1}^{n}\left(\mu_{ij}\mathbf{v}_j
+\kappa_{ij}\partial_t\mathbf{v}_j\right), \ i=1,2,\dots,n.
\end{equation}
In
the nonconservative form of the Kelvin-Voigt system formed by
\eqref{eq.1.26}--\eqref{eq.1.28}, we obtain for each
$i\in\{1,\dots,n\}$ and all $t\in(0,T)$
\begin{align*}
\begin{cases}
&\mathrm{div}\uu_i(\mathbf{x},t)=0\quad\mbox{ in }\ \Omega,
\\
&-\Delta\left(\sum\limits_{j=1}^n\mu_{ij}\uu_{j}(\mathbf{x},t)\right)+\nabla\pi_i(\mathbf{x},t)=
\mathbf{F}_i(\mathbf{x},t)\quad \mbox{
in }\ \Omega,
\\
& \uu_i(\mathbf{x},t)=\boldsymbol{0}\quad\mbox{ on }\ \partial\Omega,
\end{cases}
\end{align*}
where%
\begin{equation*}
\mathbf{F}_i:=\rho_i\Big(\mathbf{f}_i-\partial_t\mathbf{v}_i-(\mathbf{v}_i\cdot
\nabla) \mathbf{v}_i\Big)
+\sum\limits_{j=1}^{n}\gamma_{ij}\left(\mathbf{v}_j-\mathbf{v}_i\right).
\end{equation*}
By the theory of the stationary Stokes problem (see
\emph{e.g.}~\cite[Theorem IV.6.1]{Galdi:2011}), we get
\begin{equation} \label{eq.4.10}
\begin{split}
& \left\Vert
\Delta\left(\sum\limits_{j=1}^n{{\mu_{ij}}}\uu_{j}(\cdot,t)\right)\right\Vert_{r}
+ \|\nabla \pi_i(\cdot,t)\|_{r} \leq
C_1\left\Vert\mathbf{F}_i\right\Vert_{r}\\
&
=C_1\left\Vert\rho_i\big(\mathbf{f}_i-\partial_t\mathbf{v}_i-(\mathbf{v}_i\cdot
\nabla) \mathbf{v}_i\big)
+\sum\limits_{j=1}^{n}\gamma_{ij}\left(\mathbf{v}_j-\mathbf{v}_i\right)\right
\Vert_{r}
 \\
& \leq
C_2\left(\left\Vert\mathbf{f}_i(\cdot,t)\right\Vert_{r}+\left\Vert\partial_t\mathbf{v}_i(\cdot,t)\right\Vert_{r}+
\left\Vert\mathbf{v}_i(\cdot,t)\right\Vert_{q} \left\Vert\nabla
\mathbf{v}_i(\cdot,t)\right\Vert_{\frac{rq}{q-r}}+\gamma^+\sum\limits_{j=1}^{n}\left\Vert\mathbf{v}_j(\cdot,t)\right\Vert_{r} \right),
\end{split}
\end{equation}
for some $q>r$ and for all $t\in(0,T)$.
  Let us consider the linear algebraic system
\begin{equation*}
  \sum_{j=1}^{n}\mu_{ij}\Delta\uu_{j}=\mathbf{G}_i,\quad \left\Vert
 \mathbf{G}_i
\right\Vert_{r}\leq \left\Vert \mathbf{F}_i
\right\Vert_{r}.
\end{equation*}
Here we consider $\mathbf{G}_i$ as
given and $\Delta\uu_{j}$ as unknown. According to \eqref{eq.1.24}, the
matrix $\mathbf{M}$ has the inverse matrix $\mathbf{M}^{-1}$. This means that
\eqref{eq.4.10} implies
\begin{equation*}
\Delta
\uu_{j}=\mathbf{M}^{-1}\mathbf{G}_i,\qquad\left\Vert \mathbf{M}^{-1}\mathbf{G}_i
\right\Vert_{r}\leq C\left\Vert \mathbf{F}_i
\right\Vert_{r}\end{equation*}
 Thus the estimates \eqref{eq.4.10} imply
 \begin{equation}
 \label{eq.4.111}
\begin{split}
& \sum\limits_{j=1}^n \left\Vert \Delta\uu_{j}(\cdot,t)
\right\Vert_{r} \leq
C_3\sum\limits_{i=1}^n\left\Vert\rho_i\big(\mathbf{f}_i-\partial_t\mathbf{v}_i-(\mathbf{v}_i\cdot
\nabla) \mathbf{v}_i\big)
+\sum\limits_{j=1}^{n}\gamma_{ij}\left(\mathbf{v}_j-\mathbf{v}_i\right)\right
\Vert_{r}
 \\
& \leq
C_3\sum\limits_{i=1}^n\left(\left\Vert\mathbf{f}_i(\cdot,t)\right\Vert_{r}+\left\Vert\partial_t\mathbf{v}_i(\cdot,t)\right\Vert_{r}+
\left\Vert\mathbf{v}_i(\cdot,t)\right\Vert_{q} \left\Vert\nabla
\mathbf{v}_i(\cdot,t)\right\Vert_{\frac{rq}{q-r}}+\gamma^+\sum\limits_{j=1}^{n}\left\Vert\mathbf{v}_j
\right\Vert_{r} \right).
\end{split}
\end{equation}
 Moreover, using (see \cite[Lemma 1]{Antontsev-2021}
and \cite{Mazya:2011,CM:2016})
\begin{alignat}{2}\label{emb.4.1}
& \|\nabla\uu\|_{r^\ast} \leq C_{*}\|\Delta
\uu\|_{r}\qquad \forall\ \uu\in \WW^{2,r}(\Omega)\cap
\WW^{1,r}_0(\Omega), &&
\end{alignat}
we obtain
\begin{equation}\label{emb.4.2}
\left\Vert\nabla \mathbf{v}_i(\cdot,t)\right\Vert_{\frac{rq}{q-r}}\leq
C_0\left\Vert \Delta\mathbf{v}_i(\cdot,t)\right\Vert_{r},\qquad d<q<r\leq
q\leq 2^*,
\end{equation}
and by the Sobolev inequality, we have
\begin{equation}\label{emb.4.3}
\left\Vert\mathbf{v}_i(\cdot,t)\right\Vert_{q}\leq C_{0,1}\Vert\nabla
\mathbf{v}_i(\cdot,t)\Vert_{2}\leq C_{0,1} K_1<\infty,\qquad d<r<q
\leq
2^*,
\end{equation}
\begin{equation}\label{emb.4.4:1}
\left\Vert\mathbf{v}_i(\cdot,t)\right\Vert_{r}\leq C_{0,2}\Vert\nabla
\mathbf{v}_i(\cdot,t)\Vert_{2}\leq C_{0,2} K_1<\infty,\qquad r\leq2^*
\end{equation}
and
\begin{equation}\label{emb.4.4}
\left\Vert\partial_t\mathbf{v}_i(\cdot,t)\right\Vert_{r}\leq
C_{0,3}\Vert\nabla \partial_t\mathbf{v}_i(\cdot,t)\Vert_{2},\qquad
r\leq2^*
\end{equation}
for some positive constants $C_{0,i}$ depending on $d$, $\Omega$,
and $r$, where $K_1$ is the constant from \eqref{eq.2.06}. Combining
inequalities \eqref{emb.4.1}--\eqref{emb.4.4} with \eqref{eq.4.111}, we obtain
\begin{equation*}
\begin{split}
&  \sum\limits_{i=1}^n\left\Vert
\Delta\uu_i(\cdot,t)\right\Vert_{r} \leq C\sum\limits_{i=1}^n
\big( \left\Vert\mathbf{f}_i(\cdot,t)
\right\Vert_{r}+\Vert\nabla
\partial_t\mathbf{v}_i(\cdot,t)\Vert_{2}+
 \left\Vert
\Delta\mathbf{v}_i(\cdot,t) \right\Vert_{r} \big),
\end{split}
\end{equation*}
 for $d<r<q\leq 2^*$ and for all $t\in(0,T)$. Taking into account
 \eqref{eq.2.07} and \eqref{eq.4.05}, we arrive at
\begin{equation}\label{eq.4.112}
\begin{split}
& \sum\limits_{i=1}^n \left\Vert\Delta\uu_i(\cdot,t)
\right\Vert_{r} \leq C\sum\limits_{i=1}^n \left( 1+\left\Vert
\Delta\mathbf{v}_i(\cdot,t) \right\Vert_{r}\right).
 \end{split}
\end{equation}
As a consequence,
\begin{equation*}
\begin{split}
& \sum\limits_{i=1}^n \left\Vert\Delta\uu_i(\cdot,t)
\right\Vert_{r}^{r} \leq C\sum\limits_{i=1}^n \big(
1+\left\Vert \Delta\mathbf{v}_i(\cdot,t)
\right\Vert_{r}^{r}\big),\quad d<q<r.
 \end{split}
\end{equation*}
  Next, using \eqref{eq.4.09} and \eqref{eq.1.25},  we can derive
 \begin{equation*}
 \frac{\partial
 \Delta\mathbf{v}_i}{\partial t}
= \sum_{j=1}^{n}a_{ij}\left(-\Delta\mathbf{v}_{j}+\Delta\uu_{j}\right),
\qquad \displaystyle\mathbf{A}=\{a_{ij}\}_{i, j = 1}^{n}:=\mathbf{K}^{-1}\mathbf{M}
\end{equation*}
for all $i\in\{1,\dots,n\}$.
 Using the representation of the solution to the system of ordinary
differential equations, we obtain
\begin{equation*}
\Delta \mathbf{v}_i
=e^{t\mathbf{A}}\Delta \mathbf{v}_i(0)+e^{t\mathbf{A}}\int_{0}^{t}e^{-s\mathbf{A}} {\sum_{j=1}^{n}}a_{ij}\Delta \uu_{j}ds.
\end{equation*}
Applying the Young and H\"older inequalities, together with \eqref{eq.1.24}--\eqref{eq.1.25}, we arrive at
\begin{equation}\label{eq.4.123}
\sum_{i=1}^{n}\left\Vert\Delta \mathbf{v}_i(\cdot,t)\right\Vert_{r}^{r} \leq C
\left(\sum_{i=1}^{n}\left\Vert\Delta \mathbf{v}_{i,0}\right\Vert_{r}^{r} +t^{r-1} \int_{0}^{t}\sum_{i=1}^{n}\left\Vert \Delta \uu_i(\cdot,s)\right\Vert_{r}^{r}\,ds\right).
\end{equation}
Plugging \eqref{eq.4.123} into \eqref{eq.4.112}, we obtain
 \begin{equation}\label{eq.4.114}
 \sum\limits_{i=1}^n \left\Vert\Delta\uu_i(\cdot,t)
\right\Vert_{r}^{r} \leq C(T) \left( 1+
 \int_{0}^{t}\sum\limits_{i=1}^n\left\Vert \Delta \uu_i
(\cdot,s)
\right\Vert_{r}^{r}\,ds\right).
\end{equation}

Application of Gronwall's inequality, \eqref{eq.4.09} and \eqref{eq.4.04} to \eqref{eq.4.114}, leads to the estimate
\begin{equation}\label{eq.4.114+}
 \sup_{t\in[0,T]}\sum\limits_{i=1}^n \left\Vert\Delta\uu_i(\cdot,t)
\right\Vert_{r}^{r} \leq
C(T)\left(1+\sum_{i=1}^{n}\left\Vert\Delta\uu_i(\cdot,0)
\right\Vert_{r}^{r}\right)=K'_{3}.
\end{equation}
Consequently, by \eqref{eq.4.123}, \eqref{eq.4.114+}, and \eqref{eq.4.04}, we have
\begin{equation*}
 \sup_{t\in[0,T]} \sum_{i=1}^{n}\left\Vert\Delta\mathbf{v}_i(\cdot,t)
\right\Vert_{r}^{r} \leq
C(T)\left(1+\sum_{i=1}^{n}\left\Vert\Delta\mathbf{v}_{i,0}
\right\Vert_{r}^{r}\right)\leq C(T)\left(1+K'_{3}\right):=K_3,
\end{equation*}
which gives \eqref{eq.4.03}.
\end{proof}

Now, using Lemmas \ref{lem.4.10} and \ref{lemma:LS}, we prove the following higher regularity properties of the solutions  $(\mathbf{v}_i,\rho_i)$ to the problem
\eqref{eq.1.26}--\eqref{eq.1.30}.
\begin{theorem}
\label{th:A1} Let $\Omega$ be a bounded domain of $\mathbb{R}^d$
with its boundary $\partial\Omega$ supposed to be of class $\C^2$.
Let $\{(\rho_1,\mathbf{v}_1),\dots, (\rho_n,\mathbf{v}_n)\}$ be a weak solution to
the problem \eqref{eq.1.26}--\eqref{eq.1.30} and assume all the conditions of Theorem~\ref{Theorem.2.1} and Lemmas ~\ref{lemma:LS} and \ref{lem.4.10} are satisfied.
 Then for every $i\in\{1,\dots,n\}$ there hold
\begin{equation}\label{eq.4.07}
\begin{split}
& \sup_{t\in\left[0,T\right]}\left(\left\Vert
\Delta\mathbf{v}_i(\cdot,t)\right\Vert_{r}^2 + \left\Vert\nabla
\mathbf{v}_i(\cdot,t)\right\Vert_{0,\alpha}^2\right)+
\int_0^T\left\Vert\nabla {\pi}_i(\cdot,t)\right\Vert_{r}^{2}dt \leq
C_i
\end{split}
\end{equation}
for some positive constant $C_i$ {{and $\alpha$ such that
$0<\alpha\leq1-\frac{d}{r}$}}, and
\begin{equation}\label{eq.4.08}
\sup_{t\in[0,T]}\left(\|\nabla
\rho_i(\cdot,t)\|_{\infty}^2+\|\partial_t\rho_i(\cdot,t)\|_{\infty}^2\right)\leq
C_i
\end{equation}
for another positive constant $C_i$.
\end{theorem}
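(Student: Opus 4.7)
The plan is to propagate the regularity already produced by Lemma~\ref{lem.4.10}: first upgrade the $\LL^r$ control of $\Delta\mathbf{v}_i$ to a H\"older bound on $\nabla\mathbf{v}_i$, then recover the pressure gradient through a Stokes-type estimate, and finally feed the resulting $\WW^{1,\infty}$ control of the velocities into Lemma~\ref{lemma:LS} to obtain the bounds on $\nabla\rho_i$ and $\partial_t\rho_i$.

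The first term in \eqref{eq.4.07} is just the time-supremum form of the estimate already produced by Lemma~\ref{lem.4.10}. Because $\mathbf{v}_i(\cdot,t)\in\VV\cap\WW^{2,r}(\Omega)$ with $\partial\Omega\in\C^2$, standard elliptic (Cattabriga-type) regularity upgrades the bound on $\|\Delta\mathbf{v}_i(\cdot,t)\|_r$ to a uniform bound on $\|\mathbf{v}_i(\cdot,t)\|_{2,r}$; since $r>d$, the Sobolev embedding $\WW^{2,r}(\Omega)\hookrightarrow\CC^{1,\alpha}(\overline{\Omega})$, valid for every $\alpha\leq 1-d/r$, then delivers the $\CC^{0,\alpha}$ bound on $\nabla\mathbf{v}_i$ stated in the second term of \eqref{eq.4.07}.

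For the pressure, I revisit the stationary Stokes system satisfied by the auxiliary fields $\uu_i$ defined in \eqref{eq.4.09}, already exploited in the proof of Lemma~\ref{lem.4.10}. Theorem~IV.6.1 of~\cite{Galdi:2011} provides $\|\nabla\pi_i(\cdot,t)\|_r\leq C\|\mathbf{F}_i(\cdot,t)\|_r$, so it suffices to verify that $\mathbf{F}_i\in\L^2(0,T;\LL^r(\Omega))$. The external force is absorbed by hypothesis \eqref{eq.4.05}; the convective term $\rho_i(\mathbf{v}_i\cdot\nabla)\mathbf{v}_i$ is handled by coupling the uniform $\LL^\infty$ bound on $\mathbf{v}_i$ obtained in the preceding step with the uniform $\|\nabla\mathbf{v}_i\|_r$ bound of Lemma~\ref{lem.4.10}; the interactive term $\sum_j\gamma_{ij}(\mathbf{v}_j-\mathbf{v}_i)$ is controlled via \eqref{eq.2.06}; finally $\rho_i\partial_t\mathbf{v}_i$ is treated using \eqref{eq.2.07} together with the Sobolev embedding $\WW^{1,2}_0(\Omega)\hookrightarrow\LL^{2^*}(\Omega)$, which is precisely why the range $d<r\leq 2^*$ in the statement appears. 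Squaring and integrating in time then closes the last summand of \eqref{eq.4.07}.

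For the density estimates \eqref{eq.4.08}, the uniform $\LL^\infty(0,T;\WW^{2,r})$ control of $\mathbf{v}_i$ combined with the time continuity $\mathbf{v}_i\in\C([0,T];\VV)$ from \eqref{eq.2.68} and the compact embedding $\WW^{2,r}(\Omega)\hookrightarrow\hookrightarrow\CC^{1,\beta}(\overline{\Omega})$ (valid for every $\beta<1-d/r$) yields, via an Arzel\`a--Ascoli argument applied to arbitrary sequences $t_n\to t$, that $\mathbf{v}_i\in\C([0,T];\WW^{1,\infty}(\Omega))$. Hypothesis \eqref{eq.4.01} is in force, so Lemma~\ref{lemma:LS} applies for each $i$, and the pointwise estimates \eqref{eq.4.02} and \eqref{eq.4.03}, taken uniformly in $t\in[0,T]$, directly yield \eqref{eq.4.08}. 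The principal obstacle will be the pressure step: the $\LL^r$ control of $\partial_t\mathbf{v}_i$ relies solely on the energy estimate \eqref{eq.2.07} in $\L^2(0,T;\WW^{1,2}_0(\Omega))$, and Sobolev embedding only reaches $\LL^{2^*}$, so the whole argument is trapped inside the narrow window $d<r\leq 2^*$, which is nonempty precisely because $d\in\{2,3\}$.
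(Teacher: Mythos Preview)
Your proposal is correct and follows essentially the same route as the paper: the $\|\Delta\mathbf{v}_i\|_r$ bound from Lemma~\ref{lem.4.10} plus the Sobolev embedding $\WW^{1,r}_0\hookrightarrow\CC^{0,\alpha}$ (applied to $\nabla\mathbf{v}_i$) gives the H\"older control, the Stokes estimate \eqref{eq.4.10} together with the $\LL^r$ bounds on $\mathbf{F}_i$ handles the pressure gradient, and the resulting $\WW^{1,\infty}$ control of the velocities feeds into Lemma~\ref{lemma:LS} to yield \eqref{eq.4.08}. Your presentation is in fact more careful than the paper's on two points: you spell out the pressure step explicitly (the paper leaves it implicit in the Stokes estimate of Lemma~\ref{lem.4.10}), and you justify via an Arzel\`a--Ascoli/compact-embedding argument the hypothesis $\mathbf{v}_i\in\C([0,T];\WW^{1,\infty}(\Omega))$ needed to invoke Lemma~\ref{lemma:LS}, which the paper applies without verifying.
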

Recall that $\|\cdot\|_{0,\alpha}$ in \eqref{eq.4.07} denotes the norm in the H\"older function space $\CC^{0,\alpha}(\overline{\Omega})$.

\begin{proof}
Before we get into the details of the proof of
Theorem~\ref{th:A1}, we note that assumption $d<r<2^\ast$ in
\eqref{eq.4.05} above implies the following continuous embedding
\begin{equation*}
\L^{2}\left(0,T;\LL^r(\Omega)\right) \hookrightarrow
\L^{2}\left(0,T;\LL^{2}(\Omega)\right),
  \end{equation*}
and thus assumption \eqref{eq.2.03} can be replaced by
\eqref{eq.4.05} in Theorems~\ref{Theorem.2.1} and~\ref{thm:p}. By
the Sobolev embedding theorem, we can see that assumption
\eqref{eq.2.01} in those results can in turn be replaced by
\eqref{eq.4.04}.  Arguing as in \cite{Antontsev-2021}, combining
\eqref{eq.4.03} with the Sobolev embedding $
\WW^{1,r}_0(\Omega)\hookrightarrow
\mathbf{C}^{0,\alpha}(\overline{\Omega})$, valid for
$0<\alpha\leq1-\frac{d}{r}$,  one has
\begin{equation}\label{eq.4.17}
\left\Vert\nabla
\mathbf{v}_i(\cdot,t)\right\Vert_{0,\alpha}^2\leq
K_3\qquad \forall\ t\in[0,T], \qquad  r>d,
\end{equation}
which, in particular, implies
\begin{equation}\label{eq.4.18}
\int_{0}^{t}\|\nabla \mathbf{v}_i(\cdot,t)\|_{\infty}dt\leq
C(T,K_3),\qquad r>d.
\end{equation}
Thanks to \eqref{eq.4.18}, \eqref{eq.4.17}, and
Theorem~\ref{Theorem.2.1}, by  Lemma~\ref{eq.4.01} with
\eqref{eq.4.02} and \eqref{eq.4.03}, we obtain the following inequalities
\begin{alignat}{2}
& \label{eq.4.19}\|\nabla \rho_i(\cdot,t)\|_{\infty}^2\leq
d\|\nabla \rho_{i,0}\|_{\infty}^2
\exp\left(2T\sqrt{K_3}\right)<\infty, && \\
& \label{eq.4.20} \|\partial_t\rho_i(\cdot,t)\|_{\infty}^2\leq
d\|\nabla \rho_{i,0}\|_{\infty}^2 K_3
\exp\left(2T\sqrt{K_3}\right)<\infty. &&
\end{alignat}
At last, adding up \eqref{eq.4.19} and \eqref{eq.4.20}, and observing
the definition of $K_3$ (see \eqref{eq.4.07}, we prove
\eqref{eq.4.08}, which finishes the proof.
\end{proof}

\section{Uniqueness}\label{Sect:Unique}
Under the regularity properties \eqref{eq.4.07} and \eqref{eq.4.08}, we establish a uniqueness result in the following theorem. For one-constituent fluid, this theorem was proved in
\cite{Antontsev-2021}.
\begin{theorem}\label{thm:uniq}
Let $\{(\rho_1^{(1)},\mathbf{v}_1^{(1)}),\dots,
(\rho_n^{(1)},\mathbf{v}_n^{(1)})\}$ and
$\{(\rho_1^{(2)},\mathbf{v}_1^{(2)}),\dots, (\rho_n^{(2)},\mathbf{v}_n^{(2)})\}$
 be two solutions to the problem
\eqref{eq.1.26}--\eqref{eq.1.30} in the conditions of Theorems
\ref{Theorem.2.1} and \ref{th:A1}. Then
$\mathbf{v}_i^{(1)}=\mathbf{v}_i^{(2)}$ and $\rho_i^{(1)}=\rho_i^{(2)}$
for all $i\in\{1,\dots,n\}$.
 \end{theorem}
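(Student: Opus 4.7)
\textbf{Proof plan for Theorem~\ref{thm:uniq}.} The approach is the standard one for well-posedness of nonhomogeneous viscoelastic systems: I introduce the differences
\[
\mathbf{w}_i := \mathbf{v}_i^{(1)}-\mathbf{v}_i^{(2)}, \qquad \sigma_i := \rho_i^{(1)}-\rho_i^{(2)}, \qquad \tau_i := \pi_i^{(1)}-\pi_i^{(2)},
\]
derive an energy inequality for the coupled system $(\mathbf{w}_i,\sigma_i)_{i=1}^n$, and close it with Gronwall. Because of the shared initial/boundary data, $\mathbf{w}_i(\cdot,0)=\mathbf{0}$ and $\sigma_i(\cdot,0)=0$, and in particular $\mathrm{div}\,\mathbf{w}_i=0$ and $\mathbf{w}_i|_{\partial\Omega}=\mathbf{0}$.

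\textbf{Step 1: equation for $\sigma_i$.} Subtracting the two continuity equations and using $\mathrm{div}\,\mathbf{v}_i^{(1)}=\mathrm{div}\,\mathbf{v}_i^{(2)}=0$ yields
\[
\partial_t\sigma_i + \mathbf{v}_i^{(1)}\cdot\nabla\sigma_i + \mathbf{w}_i\cdot\nabla\rho_i^{(2)} = 0.
\]
Multiplying by $\sigma_i$ and integrating, the transport term drops out (divergence-free $\mathbf{v}_i^{(1)}$), so
\[
\tfrac{1}{2}\tfrac{d}{dt}\|\sigma_i\|_2^2 \le \|\nabla\rho_i^{(2)}\|_\infty\,\|\mathbf{w}_i\|_2\,\|\sigma_i\|_2,
\]
and here I invoke the regularity $\nabla\rho_i^{(2)}\in L^\infty(0,T;\LL^\infty(\Omega))$ supplied by \eqref{eq.4.08}.

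\textbf{Step 2: equation for $\mathbf{w}_i$.} Subtracting the non-conservative form of the momentum equation for the two solutions gives
\begin{align*}
\rho_i^{(1)}\partial_t\mathbf{w}_i &+ \rho_i^{(1)}(\mathbf{v}_i^{(1)}\cdot\nabla)\mathbf{w}_i + \rho_i^{(1)}(\mathbf{w}_i\cdot\nabla)\mathbf{v}_i^{(2)} + \sigma_i\big(\partial_t\mathbf{v}_i^{(2)} + (\mathbf{v}_i^{(2)}\cdot\nabla)\mathbf{v}_i^{(2)}\big) \\
&= \sigma_i\mathbf{f}_i - \nabla\tau_i + \sum_{j=1}^n\big(\mu_{ij}\Delta\mathbf{w}_j + \kappa_{ij}\partial_t\Delta\mathbf{w}_j + \gamma_{ij}(\mathbf{w}_j-\mathbf{w}_i)\big).
\end{align*}
I test with $\mathbf{w}_i$ (an admissible test function since it is solenoidal and vanishes on $\partial\Omega$), which kills the pressure term, and sum in $i$. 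Using the continuity equation for $\rho_i^{(1)}$ to handle $\int\rho_i^{(1)}\partial_t\mathbf{w}_i\cdot\mathbf{w}_i$ together with the convective term, and the symmetries $\kappa_{ij}=\kappa_{ji}$, $\gamma_{ij}=\gamma_{ji}$, I arrive at
\begin{align*}
\tfrac{1}{2}\tfrac{d}{dt}\bigg[\sum_{i=1}^n\!\int_\Omega \rho_i^{(1)}|\mathbf{w}_i|^2 + \sum_{i,j=1}^n\kappa_{ij}\!\int_\Omega\nabla\mathbf{w}_i{:}\nabla\mathbf{w}_j\bigg] &+ \sum_{i,j=1}^n\mu_{ij}\!\int_\Omega\nabla\mathbf{w}_i{:}\nabla\mathbf{w}_j \\
&= \mathcal{R}_1 + \mathcal{R}_2 + \mathcal{R}_3,
\end{align*}
where $\mathcal{R}_1$ gathers the inertial cross terms $-\int\rho_i^{(1)}(\mathbf{w}_i\cdot\nabla)\mathbf{v}_i^{(2)}\cdot\mathbf{w}_i$, $\mathcal{R}_2$ gathers the $\sigma_i$-driven terms $\int\sigma_i(\mathbf{f}_i-\partial_t\mathbf{v}_i^{(2)}-(\mathbf{v}_i^{(2)}\cdot\nabla)\mathbf{v}_i^{(2)})\cdot\mathbf{w}_i$, and $\mathcal{R}_3$ the dissipative cross term from $\gamma_{ij}$. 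By the positive definiteness \eqref{eq.1.24}--\eqref{eq.1.25}, the left-hand side controls $\sum_i\|\sqrt{\rho_i^{(1)}}\mathbf{w}_i\|_2^2 + \kappa^-\sum_i\|\nabla\mathbf{w}_i\|_2^2$ plus dissipation $\mu^-\sum_i\|\nabla\mathbf{w}_i\|_2^2$.

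\textbf{Step 3: estimate $\mathcal{R}_1,\mathcal{R}_2,\mathcal{R}_3$.} For $\mathcal{R}_1$, H\"older gives $|\mathcal{R}_1|\le\rho^+\sum_i\|\nabla\mathbf{v}_i^{(2)}\|_\infty\|\mathbf{w}_i\|_2^2$, and $\|\nabla\mathbf{v}_i^{(2)}\|_\infty$ is bounded by the H\"older norm estimate \eqref{eq.4.07}. For $\mathcal{R}_2$, I estimate each piece as $\|\sigma_i\|_2\cdot(\text{something involving }\mathbf{v}_i^{(2)}\text{ or }\mathbf{f}_i)\cdot\|\mathbf{w}_i\|_{2^\ast}$, with $\|\mathbf{w}_i\|_{2^\ast}\le C\|\nabla\mathbf{w}_i\|_2$ by Sobolev, absorbing a small fraction of the dissipation and leaving a prefactor that belongs to $L^1(0,T)$ thanks to $\partial_t\mathbf{v}_i^{(2)}\in L^2(0,T;\WW^{1,2}_0(\Omega))$, $\mathbf{v}_i^{(2)}\cdot\nabla\mathbf{v}_i^{(2)}\in L^\infty(0,T;\LL^r(\Omega))$ (from Theorem~\ref{th:A1}) and $\mathbf{f}_i\in L^2(0,T;\LL^2(\Omega))$. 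For $\mathcal{R}_3$, bound $\gamma^+\sum_{i,j}\|\mathbf{w}_j-\mathbf{w}_i\|_2\|\mathbf{w}_i\|_2 \le C\sum_i\|\mathbf{w}_i\|_2^2$.

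\textbf{Step 4: closing via Gronwall.} Adding the $\sigma_i$-estimate from Step~1 multiplied by a suitable constant to the $\mathbf{w}_i$-estimate from Steps~2--3, I obtain a differential inequality of the form
\[
\tfrac{d}{dt}\Phi(t) \le h(t)\,\Phi(t), \qquad \Phi(t):=\sum_{i=1}^n\Big(\|\sqrt{\rho_i^{(1)}}\mathbf{w}_i\|_2^2 + \|\nabla\mathbf{w}_i\|_2^2 + \|\sigma_i\|_2^2\Big),
\]
with $h\in L^1(0,T)$ and $\Phi(0)=0$. Gronwall's lemma then gives $\Phi\equiv 0$, whence $\mathbf{w}_i\equiv\mathbf{0}$ and $\sigma_i\equiv 0$ for every $i$. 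Uniqueness of $\pi_i$ follows from Theorem~\ref{thm:p}.

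\textbf{Main obstacle.} The delicate point is the control of $\mathcal{R}_2$, and in particular the term $\int\sigma_i\partial_t\mathbf{v}_i^{(2)}\cdot\mathbf{w}_i$, since only $\partial_t\mathbf{v}_i^{(2)}\in L^2(0,T;\WW^{1,2}_0)$ is available from the basic existence estimates. One must split this term as $\|\sigma_i\|_2\|\partial_t\mathbf{v}_i^{(2)}\|_3\|\mathbf{w}_i\|_6$ (in $d=3$), use the Sobolev embedding on $\partial_t\mathbf{v}_i^{(2)}$ (controlled by $\|\nabla\partial_t\mathbf{v}_i^{(2)}\|_2\in L^2(0,T)$), and absorb $\|\nabla\mathbf{w}_i\|_2$ into the left-hand side via Young's inequality so that the surviving factor multiplying $\Phi(t)$ is integrable in time. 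The cross-coupling in the $i$-index is harmless because the matrices $\mathbf{M}$, $\mathbf{K}$ are positive definite, which decouples the quadratic form on the dissipative side from its potentially indefinite off-diagonal entries.
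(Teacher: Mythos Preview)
Your proposal is correct and follows essentially the same energy-difference-plus-Gronwall strategy as the paper: derive a transport estimate for $\sigma_i$ using $\|\nabla\rho_i^{(2)}\|_\infty$ from \eqref{eq.4.08}, test the momentum difference with $\mathbf{w}_i$ and sum in $i$ using the positive definiteness of $\mathbf{M}$ and $\mathbf{K}$, control the cross terms via $\|\nabla\mathbf{v}_i^{(2)}\|_\infty$ from \eqref{eq.4.07} and $\|\nabla\partial_t\mathbf{v}_i^{(2)}\|_2\in L^2(0,T)$, then close with Gronwall. The only cosmetic differences are that the paper works with the integrated-in-time (integral) Gronwall inequality rather than the differential one, and instead of absorbing $\|\nabla\mathbf{w}_i\|_2^2$ into the dissipation it simply bounds such terms by the Kelvin--Voigt energy $\kappa^-\sum_i\|\nabla\mathbf{w}_i\|_2^2$ that is already part of $\Phi$; also, the $\gamma_{ij}$ contribution is in fact nonpositive by the symmetry $\gamma_{ij}=\gamma_{ji}$ and can be dropped rather than estimated.
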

\begin{proof}
Let $\{(\rho_1^{(1)},\mathbf{v}_1^{(1)}),\dots,
(\rho_n^{(1)},\mathbf{v}_n^{(1)})\}$ and
$\{(\rho_1^{(2)},\mathbf{v}_1^{(2)}),\dots, (\rho_n^{(2)},\mathbf{v}_n^{(2)})\}$
be two solutions to the problem \eqref{eq.1.26}--\eqref{eq.1.30}
in the conditions of Theorems \ref{Theorem.2.1} and \ref{th:A1}.
We set $\mathbf{v}_i=\mathbf{v}_i^{(1)}-\mathbf{v}_i^{(2)}$ and
$\rho_i=\rho_i^{(1)}-\rho_i^{(2)}$ for all $i\in\{1,\dots,n\}$.

By subtracting the equations \eqref{eq.1.26}, \eqref{eq.1.27}, and
\eqref{eq.1.28} for $(\mathbf{v}_i^{(1)},\pi_i^{(1)},\rho_i^{(1)})$ to
the
 equations \eqref{eq.1.26}, \eqref{eq.1.27}, and \eqref{eq.1.28}
 for $(\mathbf{v}_i^{(2)}, \pi_i^{(2)}, \rho_i^{(2)})$, we obtain
\begin{align}
&\mathrm{div}\mathbf{v}_i=0,\nonumber
\\
\label{eq.6.02}%
\begin{split}
&\partial_t\left(\rho_i^{(1)}\mathbf{v}_i\right) +
\mathbf{div}\left(\rho_i^{(1)}\mathbf{v}_i^{(1)}\otimes\mathbf{v}_i\right)
=\rho_i\mathbf{f}_i-\nabla\pi_i-\rho_i\partial_t\mathbf{v}^{(2)}_i
+\mathrm{div}\left(\rho_i\mathbf{v}^{(1)}_i\right)\mathbf{v}^{(2)}_i
\\
& +\mathrm{div}\left(\rho^{(2)}_i\mathbf{v}_i\right)\mathbf{v}^{(2)}_i
-\mathbf{div}\left(\rho_i^{(1)}\mathbf{v}_i\otimes\mathbf{v}^{(2)}_i\right)
-\mathbf{div}\left(\rho_i\mathbf{v}^{(2)}_i\otimes\mathbf{v}^{(2)}_i\right)
\\
& +\sum\limits_{j=1}^n\Big(\mu_{ij}\Delta\mathbf{v}_j
+\kappa_{ij}\partial_t\Delta\mathbf{v}_j
+\gamma_{ij}\left(\mathbf{v}_j-\mathbf{v}_i\right)\Big),
\end{split}
\\
& \label{eq.6.03}%
\partial_t\rho_i+\left(\mathbf{v}^{(1)}_i\cdot\nabla\right)\rho_i+\left(\mathbf{v}_i \cdot\nabla\right)\rho^{(2)}_i=0,
\end{align}
where $i=1,\ldots,n.$

Multiplying \eqref{eq.6.02} by $2\mathbf{v}_i$, integrating the resulting
equation over $\Omega$, summing up by $i$ from $1$ to $n$, and using
the formula
\begin{equation*}
2\partial_t\left(\rho^{(1)}_i\mathbf{v}_i\right)\mathbf{v}_i=\partial_t\rho^{(1)}_i
|\mathbf{v}_i|^2+
\partial_t\left(\rho^{(1)}_i|\mathbf{v}_i|^2\right),
\end{equation*}
we arrive at
\begin{equation}\label{eq.6.04}%
\begin{split}
&
\frac{d}{dt}\left(\sum\limits_{i=1}^n\int_{\Omega}\rho^{(1)}_i|\mathbf{v}_i|^2
\,d\mathbf{x} +
\sum\limits_{i,j=1}^n\kappa_{ij}\int_{\Omega}\nabla\mathbf{v}_j:\nabla\mathbf{v}_i
\,d\mathbf{x} \right)
\\&
+2\sum\limits_{i,j=1}^n\mu_{ij}\int_{\Omega}\nabla\mathbf{v}_j:\nabla\mathbf{v}_i\,d\mathbf{x}
+\sum\limits_{i=1}^n\int_{\Omega}\partial_t\rho^{(1)}_i|\mathbf{v}_i|^2
\,d\mathbf{x}
\\&
+2\sum\limits_{i=1}^n\int_{\Omega}\mathbf{div}\left(\rho^{(1)}_i\mathbf{v}_i^{(1)}\otimes\mathbf{v}_i\right)\cdot\mathbf{v}_i\,d\mathbf{x}
=2\sum\limits_{i=1}^{n}\int_\Omega
\rho_i\mathbf{f}_i\cdot\mathbf{v}_i\,d\mathbf{x}
\\&
+2\sum\limits_{i=1}^{n}\int_\Omega
\mathrm{div}\left(\rho_i\mathbf{v}^{(1)}_i\right)\mathbf{v}^{(2)}_i\cdot\mathbf{v}_i\,d\mathbf{x}
\\&
+2\sum\limits_{i=1}^{n}\int_\Omega
\mathrm{div}\left(\rho^{(2)}_i\mathbf{v}_i\right)\mathbf{v}^{(2)}_i
\cdot\mathbf{v}_i\,d\mathbf{x}
\\
&-2\sum\limits_{i=1}^n\int_{\Omega}\mathbf{div}\left(\rho^{(1)}_i\mathbf{v}_i\otimes\mathbf{v}_i^{(2)}\right)\cdot\mathbf{v}_i\,d\mathbf{x}
\\
& -2\sum\limits_{i=1}^n\int_{\Omega}\mathbf{div}\left(\rho_i
\mathbf{v}_i^{(2)}\otimes\mathbf{v}_i^{(2)}\right)\cdot\mathbf{v}_i\,d\mathbf{x}
\\
& -\sum\limits_{i=1}^{n}\int_\Omega
\rho_i\partial_t\mathbf{v}_i^{(2)}\cdot\mathbf{v}_i\,d\mathbf{x}-\sum\limits_{i,j=1}^{n}\gamma_{ij}\int_{\Omega}\left|\mathbf{v}_i-\mathbf{v}_j\right|^2\,d\mathbf{x}.
\end{split}
\end{equation}
Arguing as we did for \eqref{eq.A}--\eqref{eq.AA}, one has
\begin{equation*}
\sum\limits_{i=1}^n\int_{\Omega}\partial_t\rho^{(1)}_i|\mathbf{v}_i|^2
\,d\mathbf{x}+2\sum\limits_{i=1}^n\int_{\Omega}\mathbf{div}\left(\rho^{(1)}_i\mathbf{v}_i^{(1)}\otimes\mathbf{v}_i\right)\cdot\mathbf{v}_i\,d\mathbf{x}=0.
\end{equation*}
Moreover, using
\begin{align*}
\begin{split}
&\mathbf{div}\left(\rho^{(1)}_i\mathbf{v}_i\otimes\mathbf{v}_i^{(2)}\right)\cdot\mathbf{v}_i=
\mathrm{div}\left(\rho^{(1)}_i\mathbf{v}_i\right)\mathbf{v}_i^{(2)}\cdot\mathbf{v}_i
+\rho^{(1)}_i(\mathbf{v}_i\cdot\nabla)\mathbf{v}_i^{(2)}\cdot\mathbf{v}_i,
\end{split}
\\
\begin{split}
&\mathbf{div}\left(\rho_i\mathbf{v}_i^{(2)}\otimes\mathbf{v}_i^{(2)}\right)\cdot\mathbf{v}_i=
\mathrm{div}\left(\rho_i\mathbf{v}_i^{(2)}\right)\mathbf{v}_i^{(2)}\cdot\mathbf{v}_i
+\rho_i(\mathbf{v}_i^{(2)}\cdot\nabla)\mathbf{v}_i^{(2)}\cdot\mathbf{v}_i,
\end{split}
\end{align*}
we get
\begin{equation*}
\begin{split}
&\int_\Omega
\mathrm{div}\left(\rho_i\mathbf{v}^{(1)}_i\right)\mathbf{v}^{(2)}_i\cdot\mathbf{v}_i\,d\mathbf{x}
+\int_\Omega \mathrm{div}\left(\rho^{(2)}_i\mathbf{v}_i\right)\mathbf{v}^{(2)}_i
\cdot\mathbf{v}_i\,d\mathbf{x}
\\
&-\int_{\Omega}\mathbf{div}
(\rho^{(1)}_i\mathbf{v}_i\otimes\mathbf{v}_i^{(2)})\cdot\mathbf{v}_i\,d\mathbf{x}
-\int_{\Omega}\mathbf{div}\left(\rho_i\mathbf{v}_i^{(2)}
\otimes\mathbf{v}_i^{(2)}\right)\cdot\mathbf{v}_i\,d\mathbf{x}=
\\
& \int_{\Omega}\mathbf{div}\left(\rho_i\mathbf{v}_i^{(1)}-\rho^{(1)}_i\mathbf{v}_i+\rho^{(2)}_i\mathbf{v}_i-\rho_i\mathbf{v}^{(2)}_i\right)\mathbf{v}_i^{(2)}
\cdot\mathbf{v}_i\,d\mathbf{x}
\\
&-\int_\Omega
\rho^{(1)}_i(\mathbf{v}_i\cdot\nabla)\mathbf{v}_i^{(2)}\cdot\mathbf{v}_i\,d\mathbf{x}-\int_\Omega
\rho_i\left(\mathbf{v}_i^{(2)}\cdot\nabla\right)\mathbf{v}_i^{(2)}\cdot\mathbf{v}_i\,d\mathbf{x}=
\\
&-\int_\Omega
\rho^{(1)}_i(\mathbf{v}_i\cdot\nabla)\mathbf{v}_i^{(2)}\cdot\mathbf{v}_i\,d\mathbf{x}-\int_\Omega
\rho_i\left(\mathbf{v}_i^{(2)}\cdot\nabla\right)\mathbf{v}_i^{(2)}\cdot\mathbf{v}_i\,d\mathbf{x}.
\end{split}
\end{equation*}
Plugging into \eqref{eq.1.28},
we obtain
\begin{equation*}%
\begin{split}
&
\frac{d}{dt}\left(\sum\limits_{i=1}^n\int_{\Omega}\rho^{(1)}_i|\mathbf{v}_i|^2
\,d\mathbf{x} +
\sum\limits_{i,j=1}^n\kappa_{ij}\int_{\Omega}\nabla\mathbf{v}_j:\nabla\mathbf{v}_i\,d\mathbf{x} \right)
\\&
+2\sum\limits_{i,j=1}^n\mu_{ij}\int_{\Omega}\nabla\mathbf{v}_j:\nabla\mathbf{v}_i
\,d\mathbf{x}
=2\sum\limits_{i=1}^{n}\int_\Omega\rho_i\mathbf{f}_i
\cdot\mathbf{v}_i\,d\mathbf{x}
\\&
-\sum\limits_{i=1}^{n}\int_\Omega\rho_i\partial_t\mathbf{v}_i^{(2)}\cdot\mathbf{v}_i
\,d\mathbf{x}
-2\sum\limits_{i=1}^{n}\int_\Omega\rho_i\left(\mathbf{v}_i^{(2)}
\cdot\nabla\right)\mathbf{v}_i^{(2)}\cdot\mathbf{v}_i\,d\mathbf{x}
\\&
-2\sum\limits_{i=1}^{n}\int_\Omega
\rho^{(1)}_i(\mathbf{v}_i\cdot\nabla)\mathbf{v}_i^{(2)}\cdot\mathbf{v}_i\,d\mathbf{x} -
\sum\limits_{i,j=1}^{n}\gamma_{ij}\int_{\Omega}\left|\mathbf{v}_i-\mathbf{v}_j\right|^2\,d\mathbf{x}.
\end{split}
\end{equation*}
Then, multiplying \eqref{eq.6.03} by  $2\rho_i$, integrating the
resulting equation over $\Omega$  and using
\[
2\int_{\Omega}\rho_i(\mathbf{v}_i^{(1)}\cdot\nabla)\rho_i\,d\mathbf{x}
=\int_{\Omega}\mathbf{v}_i^{(1)}\cdot\nabla
\rho_i^2\,d\mathbf{x}=-
\int_{\Omega}\rho_i^2\mathrm{div}\mathbf{v}_i^{(1)}\,d\mathbf{x}=0
\]
we obtain
\begin{equation}
\label{eq.6.05}%
\begin{split}
& \frac{d}{dt}\int_{\Omega}|\rho_i|^2 \,d\mathbf{x}
=-2\int_{\Omega}\left(\left(\mathbf{v}^{(1)}_i \cdot\nabla\right)
\rho_i+(\mathbf{v}_i \cdot\nabla)\rho^{(2)}_i\right)\rho_i\,d\mathbf{x} =-2\int_{\Omega}\rho_i \mathbf{v}_i\cdot\nabla\rho^{(2)}_i \,d\mathbf{x}.
\end{split}
\end{equation}
Summing \eqref{eq.6.05} by $i$ from $1$ to $n$ and add with
($\ref{eq.6.04}$). Then integrating the result by $s$ from $0$ to
$t$, we obtain
\begin{equation*}
\begin{split}
& \sum\limits_{i=1}^n\left(\|\sqrt{\rho^{(1)}_i}\mathbf{v}_i
\|^2_{2}+\|\rho_i\|^2_{2}\right) +
\sum\limits_{i,j=1}^n\kappa_{ij}\int_{\Omega}\nabla\mathbf{v}_j:\nabla\mathbf{v}_i
\,d\mathbf{x}
+2\int_0^t\sum\limits_{i,j=1}^n\mu_{ij}\int_{\Omega}\nabla\mathbf{v}_j:\nabla\mathbf{v}_i
\,d\mathbf{x} ds
\\&
=2\int_0^t\sum\limits_{i=1}^{n}\int_\Omega
\rho_i\mathbf{f}_i\cdot\mathbf{v}_i\,d\mathbf{x}
ds-\int_0^t\sum\limits_{i=1}^{n}\int_\Omega
\rho_i\partial_t\mathbf{v}_i^{(2)}\cdot\mathbf{v}_i\,d\mathbf{x} ds
\\&
-2\int_0^t\sum\limits_{i=1}^{n}\int_\Omega
\rho_i\left(\mathbf{v}_i^{(2)}\cdot\nabla\right)\mathbf{v}_i^{(2)}\cdot\mathbf{v}_i\,d\mathbf{x} ds
-2\int_0^t\sum\limits_{i=1}^{n}\int_\Omega
\rho^{(1)}_i(\mathbf{v}_i\cdot\nabla)\mathbf{v}_i^{(2)}\cdot\mathbf{v}_i\,d\mathbf{x} ds
\\&
-2\int_0^t\sum\limits_{i,j=1}^{n}\int_{\Omega}\rho_i \mathbf{v}_i
\cdot\nabla\rho^{(2)}_i \,d\mathbf{x} ds
-\int_0^t\sum\limits_{i,j=1}^{n}\gamma_{ij}\int_{\Omega}\left|\mathbf{v}_i-\mathbf{v}_j\right|^2\,
d\mathbf{x} ds,
\end{split}
\end{equation*}
for all $t\in[0,T]$.
Similarly as we did for \eqref{eq.2.32}--\eqref{eq.2.37}, we get
\begin{equation}
\label{eq.6.06}%
\begin{split}
& y(t)+
2\mu^-\int_0^t\sum\limits_{i=1}^n\|\nabla\mathbf{v}_i(\cdot,s)\|^2_{2}\,ds
\leq 2\int_0^t\sum\limits_{i=1}^{n}\int_\Omega
\rho_i\mathbf{f}_i\cdot\mathbf{v}_i\,d\mathbf{x} ds
\\&
-\int_0^t\sum\limits_{i=1}^{n}\int_\Omega
\rho_i\partial_t\mathbf{v}_i^{(2)}\cdot\mathbf{v}_i\,d\mathbf{x} ds
\\&
-2\int_0^t\sum\limits_{i=1}^{n}\int_\Omega
\rho_i\left(\mathbf{v}_i^{(2)}\cdot\nabla\right)\mathbf{v}_i^{(2)}\cdot\mathbf{v}_i\,d\mathbf{x}
ds
\\&
-2\int_0^t\sum\limits_{i=1}^{n}\int_\Omega
\rho^{(1)}_i(\mathbf{v}_i\cdot\nabla)\mathbf{v}_i^{(2)}\cdot\mathbf{v}_i\,d\mathbf{x}
ds
\\&
-2\int_0^t\sum\limits_{i,j=1}^{n}\int_{\Omega}\rho_i\mathbf{v}_i
\cdot\nabla\rho^{(2)}_i \,d\mathbf{x} ds:= \sum_{k=1}^{5}I_k(t),
\end{split}
\end{equation}
where
\[
y(t)=\sum\limits_{i=1}^n\big(\rho^-\|\mathbf{v}_i(\cdot,t)\|^2_{2}+\|\rho_i(\cdot,t)\|^2_{2}\big)
+ \kappa^-\sum\limits_{i=1}^n\|\nabla\mathbf{v}_i(\cdot,t)\|^2_{2}.
\]

Now, we estimate each term on the right-hand side of
\eqref{eq.6.06}.
Using the H\"{o}lder, Cauchy, and Sobolev
inequalities, together with the estimates \eqref{eq.2.06} and
\eqref{eq.2.07}, we obtain the following estimates for $I_k$,
$k\in\{1\dots,5\}$, for all $t\in[0,T]$:
\begin{align}
\label{eq.6.07}
\begin{split}
|I_1(t)|&\leq2\sum\limits_{i=1}^{n}\int_0^t\int_\Omega |
\rho_i\mathbf{f}_i\cdot\mathbf{v}_i|\,d\mathbf{x} ds
\leq2\int_0^t\sum\limits_{i=1}^{n}\|\rho_i(\cdot,s)\|_{2}\|\mathbf{f}_i(\cdot,s)\|_{r}\|\mathbf{v}_i(\cdot,s)\|_{q}
\,ds
\\&
\leq2\int_0^t\sum\limits_{i=1}^{n}\|\rho_i(\cdot,s)\|_{2}\|\mathbf{f}_i(\cdot,s)\|_{r}C(\Omega)\|\nabla\mathbf{v}_i(\cdot,s)\|_{2}\,ds
\\&
\leq
C(\Omega,\kappa^-)\int_0^t\|\mathbf{f}_i(\cdot,s)\|_{r}\sum\limits_{i=1}^{n}\Big(\|\rho_i(\cdot,s)\|^2_{2}+\kappa^-\|\nabla\mathbf{v}_i(\cdot,s)\|^2_{2}
\Big)\,ds
\\
&\leq \int_0^t C_1(s) y(s)ds, \quad
\frac{1}{2}+\frac{1}{r}+\frac{1}{q}=1,\quad 1\leq
q\leq\frac{2d}{d-2},
\end{split}
\end{align}
where $C_1(t)=C(\Omega, \kappa^-)\|\mathbf{f}_i(\cdot,t)\|_{r}$
and $C_1\in \L^1([0,T])$.
Analogously,
\begin{align}
\label{eq.6.08}%
\begin{split}
&|I_2(t)|\leq\int_0^t \sum\limits_{i=1}^{n}\int_\Omega
|\rho_i(\mathbf{x},s)|\,
|\partial_t\mathbf{v}_i^{(2)}(\mathbf{x},s)|\,
| \mathbf{v}_i(\mathbf{x},s)|\,d\mathbf{x} ds
\\&
\leq
\int_0^t\sum\limits_{i=1}^{n}\|\rho_i(\cdot,s)\|_{2}
\|\partial_t\mathbf{v}_i^{(2)}(\cdot,s)\|_{4}\|\mathbf{v}_i(\cdot,s)\|_{4}\,ds
\\&
\leq C_2(\Omega, \kappa^-)
\int_0^t\sum\limits_{i=1}^{n}\|\partial_t\nabla\mathbf{v}_i^{(2)}(\cdot,s)\|_{2}\Big(\|\rho_i(\cdot,s)\|^2_{2}+
\kappa^-\|\nabla \mathbf{v}_i(\cdot,s)\|^2_{2}\Big)\,ds
\\&
\leq C_2 \int_0^t y(s)ds, \quad {d\leq4},
\end{split}
\end{align}
\begin{align}
\label{eq.6.09}%
\begin{split}
|I_3(t)|&\leq 2\int_0^t\sum\limits_{i=1}^{n}\int_\Omega |\rho_i(\mathbf{x},s)|\,
|\mathbf{v}_i^{(2)}(\mathbf{x},s)|\, |\nabla\mathbf{v}_i^{(2)}(\mathbf{x},s)|\, |\mathbf{v}_i(\mathbf{x},s)|\,d\mathbf{x} ds
\\&
\leq 2\int_0^t\sum\limits_{i=1}^{n}
\|\mathbf{v}^{(2)}_i(\cdot,s)\|_{4}\|\nabla
\mathbf{v}^{(2)}_i(\cdot,s)\|_{\infty}
 \|\rho_i(\cdot,s)\|_{2}\|\mathbf{v}_i(\cdot,s)\|_{4}\,ds
\\
&\leq C(\Omega)\int_0^t\sum\limits_{i=1}^{n}
\|\nabla\mathbf{v}^{(2)}_i(\cdot,s)\|_{2}\|\nabla
\mathbf{v}^{(2)}_i(\cdot,s)\|_{\infty}\|\rho_i(\cdot,s)\|_{2}\|\nabla\mathbf{v}_i(\cdot,s)\|_{2}
\,ds
\\&
\leq  C_3(\kappa^-, \Omega, K_3)\int_0^t \sum\limits_{i=1}^{n}
\Big(\|\rho_i(\cdot,s)\|_{2}^2+\kappa^-\|\nabla
\mathbf{v}_i(\cdot,s)\|^2_{2}\Big)\,ds \leq C_3\int_0^ty(s)\,ds,
\end{split}
\end{align}
\begin{align}
\begin{split}
|I_4(t)|&\leq 2\int_0^t\sum\limits_{i=1}^{n}\int_\Omega
|\rho^{(1)}_i(\mathbf{v}_i\cdot\nabla)\mathbf{v}_i^{(2)}\cdot\mathbf{v}_i|\,d\mathbf{x} ds
\\&
\leq 2\rho^+\int_0^t\sum\limits_{i=1}^{n} \|\nabla
\mathbf{v}^{(2)}_i(\cdot,s)\|_{2} \|\mathbf{v}_i(\cdot,s)\|^2_{4}\,ds
\\&
\leq C(\rho^+,\Omega,\kappa^-)\int_0^t\sum\limits_{i=1}^{n}
\kappa^-\|\nabla\mathbf{v}_i(\cdot,s)\|^2_{2}\,ds\leq C_4\int_0^ty(s)\,ds,
\end{split}
\label{eq.6.10}
\end{align}
\begin{align}
\label{eq.6.11}%
\begin{split}
|I_5(t)|\leq & 2\int_0^t\sum\limits_{i,j=1}^{n}\int_{\Omega}|\rho_i
\mathbf{v}_i \cdot\nabla\rho^{(2)}_i| \,d\mathbf{x} ds
\\&
\leq\int_0^t \sum\limits_{i=1}^{n} \|\mathbf{v}_i(\cdot,s)\|_{2}\|\nabla
\rho^{(2)}_i(\cdot,s)\|_{\infty}
 \|\rho_i(\cdot,s)\|_{2}\,ds
\\&
\leq  C_5(\rho^-, K_3)\int_0^t \sum\limits_{i=1}^{n}
\Big(\rho^-\|\mathbf{v}_i(\cdot,s)\|_{2}^2+\|\rho_i(\cdot,s)\|_{2}^2\Big)\,ds
\\&
\leq C_5 \int_0^ty(s)\,ds.
\end{split}
\end{align}

Note that in the estimates of $I_3$ and $I_5$ obtained in
\eqref{eq.6.09} and \eqref{eq.6.11}, we have used  the estimates  \eqref{eq.4.18} and \eqref{eq.4.19}, respectively.
Plugging \eqref{eq.6.07}--\eqref{eq.6.11} into \eqref{eq.6.06}, we
arrive at the integral inequality
  \begin{equation}\label{eq.6.12}%
 y(t)\leq \int_0^t a(s)y(s)\,ds,
\end{equation}
where $a(t)=C_1(t)+C_2+C_3+C_4+C_5$. It follows from \eqref{eq.6.12}
and  {Gr\"{o}nwall}'s inequality  that
\begin{equation*}
y(t)=\sum\limits_{i=1}^n\Big(\rho^-\|\mathbf{v}_i(\cdot,t)\|^2_{2}+\|\rho_i(\cdot,t)\|^2_{2}\Big)
+ \kappa^-\sum\limits_{i=1}^n\|\nabla\mathbf{v}_i(\cdot,t)\|^2_{2}=0,\quad
\forall\ t\in[0,T],
\end{equation*}
which yields $\mathbf{v}_i^{(1)}=\mathbf{v}_i^{(2)}$ and $\rho_i^{(1)}=\rho_i^{(2)}$ for all $i\in\{1,\dots,n\}$.
\end{proof}

\section*{Acknowledgments}
S.N.~Antontsev, I.V.~Kuznetsov and D.A.~Prokudin were supported by the Ministry of Science and Higher
Education of the Russian Federation under project no. FWGG-2021-0010, Russian Federation.
I.V.~Kuznetsov and D.A.~Prokudin were supported by the Ministry of Science and Higher
Education of the Russian Federation under project no. FZMW-2024-0003, Russian Federation.
H.B.~de~Oliveira and Kh.~Khompysh were supported by the Grant no. AP19676624 from the Ministry of Science and Education of the Republic of Kazakhstan.
H.B.~de~Oliveira was also partially supported by CIDMA under the Portuguese Foundation for Science and Technology
Multi-Annual Financing Program for R\&D Units (FCT, \url{https://ror.org/00snfqn58}).
\bibliographystyle{AMS}
\bibliography{ADKKP-11-06-25}

\end{document}